\newcommand{\ms}{\text{ms}}
\newcommand{\glo}{\text{glo}}
\newcommand{\aux}{\text{aux}}
\newcommand{\tforall}{\text{ for all }}
\newcommand{\tand}{\quad\text{and}\quad}
\theoremstyle{definition}
\newtheorem{theorem}{Theorem}[section]
\newtheorem{example}{Example}[section]
\newtheorem{assumption}{Assumption}[section]
\newtheorem{lemma}{Lemma}[section]
\newtheorem{remark}{Remark}
\newcommand\norm[1]{\left\Vert#1\right\Vert}
\newcommand\abs[1]{\lvert#1\rvert}
\newcommand\dt{\,\text{d}t}
\newcommand\dx{\,\text{d}x}
\newcommand{\Nv}{N_\mathrm{v}}
\newcommand{\Ne}{N_\mathrm{e}}
\DeclareMathOperator{\spa}{span}
\definecolor{black}{rgb}{0,0,0}
\definecolor{red}{rgb}{1,0,0}
\definecolor{blue}{rgb}{0,0,1}
\newcommand{\revi}[1]{\textcolor{black}{#1}}
\newcommand{\revii}[1]{\textcolor{black}{#1}}
\title{\sc Fast Online Adaptive Enrichment for Poroelasticity with High Contrast}
\author{Xin Su\footnote{Department of Mathematics, Texas A\&M University, College Station, TX 77843, United States} \quad and \quad Sai-Mang Pun\footnote{Department of Chemistry, The University of Hong Kong; and Hong Kong Quantum AI Lab, Hong Kong SAR. The work was partly done when the author was affiliated with Department of Mathematics in Texas A\&M University.}}
\date{\today}
\begin{document}
\maketitle
\begin{abstract}
In this work, we develop an online adaptive enrichment method within the framework of the Constraint Energy Minimizing Generalized Multiscale Finite Element Method (CEM-GMsFEM) for solving the linear heterogeneous poroelasticity models with coefficients of high contrast. The proposed method makes use of information of residual-driven error indicators to enrich the multiscale spaces for both the displacement and the pressure variables in the model. Additional online basis functions are constructed in oversampled regions accordingly and are adaptively chosen to reduce the error the most. A complete theoretical analysis of the online enrichment algorithm is provided and justified by thorough numerical experiments.
\end{abstract}

{\bf Keywords:} Linear poroelasticity, High contrast, Online adaptivity, Generalized multiscale finite element method, Constraint energy minimization.
\section{Introduction}

Modeling the deformation of porous media saturated by incompressible viscous fluid plays an important role in a wide range of applications such as reservoir engineering in the field of geomechanics \cite{zoback2010reservoir}  or environmental safety due to overburden subsidence and compaction \cite{geomechanics}.  In order to obtain a reasonable model, it is vital to couple the flow of the fluid with the behavior of the surrounding solid. The so-called Biot's model that couples a Darcy flow with linear elastic behavior of the porous medium was proposed in \cite{biot1941general, biot1956theory}. 
More generally, mathematical models of the poroelastic multiphase flow problems can also be considered \cite{shen2022sequential}. 
The corresponding well-posedness results were developed in \cite{showalter2000diffusion}.

If the medium is homogeneous, typical finite element techniques can be used to simulate the poroelastic behavior, see for instance \cite{chaabane2018splitting, ern2009posteriori,lewis1998finite,murad1994stability}. 
However, if the material is strongly heterogeneous, the displacement and pressure might oscillate on fine scale. It is prohibitively expensive to use fine scale method due to the level of details incorporated in fine-scale models.  Coarse models are often used to get around the expensive computational cost under such circumstances. These methods include the generalized multiscale finite element method (GMsFEM) \cite{gmsfem_poro1,gmsfem_poro2, efendiev2013generalized}, the variational multiscale methods \cite{hughes1998variational}, the localized orthogonal decomposition technique \cite{lod_poro,maalqvist2017generalized,maalqvist2014localization}, and the constraint energy minimizing GMsFEM (CEM-GMsFEM) \cite{chung2017constraint, fu2019computational}. All these techniques attempt to construct multiscale spaces to obtain accurate coarse-scale solutions, which could reflect the spatial fine-scale features. 
 
Such construction of the multiscale spaces is usually regarded as offline computation. The offline stage requires one-time computation effort in advance. With the reduced degrees of freedom, the simulation can be efficient, especially when simulating the evolution of the solutions in our case. The approach in \cite{fu2019computational} is classified as offline method while the present work takes care of global information in the equations such as source term, which is classified as the online method. The online approaches are brought up because the slow error decay in the offline method after a certain number of basis functions are selected. Two online approaches are proposed based on the GMsFEM framework. The error decay of the online GMsFEM proposed in \cite{online_mixed,chung2016adaptive,chung2015residual} is proportional to $1-C\Lambda$, where $\Lambda$ is the smallest eigenvalue (across all the coarse blocks) whose corresponding eigenvector is not included in the coarse space with some generic constant $C$ ensuring the positivity of the convergence rate. The error decay of the fast online adaptive approach proposed in \cite{chung2018fast}  is proportional to some user-defined parameters if the oversampling region is properly chosen and the number of offline basis function is sufficient. 

Based on the previous work \cite{fu2019computational}, we adopt the idea presented in \cite{chung2018fast} to develop online adaptive enrichment strategies for the poroelasticity problem. 
The computation is divided into the offline and online stages. 
In the offline stage, we construct the offline basis functions with CEM-GMsFEM as in \cite{fu2019computational} for both displacement and pressure variables. For the temporal discretization, the implicit Euler scheme is utilized. 
In the online stage, we aim to construct online basis functions (for both displacement and pressure) according to the information of the residual indicators based on the previously obtained solution in the offline stage. Our theoretical result shows that with the proposed online enrichment strategy, the online adaptive algorithm leads to a rapid convergence to the continuous solution. 
We carry out numerical experiments where we take into account high contrast Young's modulus and high contrast permeability fields. 
In the first example the online adaptive procedure is performed at the final time step while in the second example, the online adaptive procedure is performed every five time steps. All results show that the error decay significantly after enriching the multiscale spaces with our method. Keep enriching the multiscale spaces every several time steps can get an even better result. We prove that if the oversampling regions are properly chosen and the number of offline basis function is sufficient, the error decay of both displacement and pressure is proportional to online tolerance parameters and controlled by the error from previous time step. This means that the error decay can be made close to 0 if the solution of the previous time step is good enough.

An organization of the paper is as follows. In Section \ref{sec:modelpb}, we present some preliminaries related to the model problem considered in this work. In Section \ref{sec:offline}, we outline the CEM-GMsFEM for the construction of the offline basis functions, and in Section \ref{sec:online}, we present our online adaptive enrichment algorithm. Numerical results, showing the expected performance of the proposed method, are presented in Section \ref{sec:num}. Section \ref{sec:anal} is devoted to the theoretical analysis of the proposed method.  Finally, concluding remarks are drawn in Section \ref{sec:con}. 

\section{Preliminaries}\label{sec:modelpb}
\subsection{Model problem}
Let $\Omega \subset \mathbb{R}^d$ ($d \in \{ 2, 3 \}$) be a bounded and polyhedral Lipschitz domain and $T > 0$ be a fixed time. We consider the problem of linear poroelasticity: 
Find the pressure 
$p\colon [0,T]\times \Omega\to \mathbb{R}$ and the displacement field $u\colon [0,T]\times \Omega\to \mathbb{R}^d$ such that 
\begin{subequations}
\label{eq:model}
\begin{alignat}{3}
\label{eq:model1}
-\nabla\cdot \sigma(u) +  \nabla (\alpha p)  &= 0\phantom{f}\qquad\text{in } (0,T] \times \Omega, \\
\label{eq:model2}
\partial_t \bigg( \alpha \nabla\cdot  u + \frac{1}{M}  p \bigg)  - \nabla \cdot \bigg( \frac{\kappa}{\nu} \nabla p\bigg) &= f\phantom{0}\qquad\text{in } (0,T] \times \Omega, 
\end{alignat}
\end{subequations}	
with boundary and initial conditions
\begin{subequations}
\begin{alignat}{3}
\label{eq:init1}
u&=0\phantom{p^0}\qquad \text{on }(0,T]\times \partial\Omega,\\
\label{eq:init2}
p&=0\phantom{p^0}\qquad \text{on }(0,T]\times \partial\Omega,\\
p(\cdot,0)&=p^0\phantom{0}\qquad\text{in }\Omega.\label{eq:init3}
\end{alignat}
\end{subequations}
For the sake of simplicity, we only consider homogeneous Dirichlet boundary here. The extension to other types of boundary conditions can also be considered (e.g., see \cite{henning2014localized}). In this model, the primary sources of the heterogeneity are the stress tensor $\sigma(u)$, the permeability $\kappa \in L^\infty(\Omega)$, and the Biot-Willis fluid-solid coupling coefficient $\alpha\in[0,1]$. We denote by $M$ the Biot modulus and by $\nu$ the fluid viscosity. Both are assumed to be constant. Moreover, $f \in L^2(0,T;\Omega)$ is a source term representing injection or production processes and $p^0 \in L^2(\Omega)$. Body forces, such as gravity, are neglected. In the case of a linear elastic stress-strain constitutive relation, the stress and strain tensors are expressed as
 \begin{equation*}
 {\sigma(u)} := 2\mu  {\varepsilon(u)} + \lambda (\nabla\cdot  {u}) \,  {\mathcal{I}} \quad \text{and} \quad
 {\varepsilon}( {u}) := \frac{1}{2} \Big( \nabla  {u} + (\nabla  {u})^T \Big),
 \end{equation*}
 where $\mathcal{I} \in \mathbb{R}^{d \times d}$ is the identity tensor. Moreover, $\lambda> 0$ and $\mu>0$ are the Lam\'e coefficients, 
 which can be expressed in terms of the Young's modulus $E>0$
 and the Poisson ratio $\nu_p\in(-1, 1/2)$ as follows: 
 \begin{equation*}
 \lambda:=\frac{\nu_p}{(1-2\nu_p)(1+\nu_p)}E, \tand
 \mu:=\frac{1}{2(1+\nu_p)}E.
 \end{equation*}
In the considered cases of heterogeneous media, the coefficients $\mu$, $\lambda$, $\kappa$, and $\alpha$ may be highly oscillatory.
 
 \subsection{Function spaces}
In this subsection, we clarify the notation used throughout the article. 
We write $(\cdot,\cdot)$ to denote the inner product in $L^2(\Omega)$ and $\norm{\cdot}$ for the corresponding norm. 
Let $H^1(\Omega)$ be the classical Sobolev space with norm $\norm{v}_1 := \big( \norm{v}^2 + \norm{\nabla v}^2 \big)^{1/2}$ for all $v \in H^1(\Omega)$ and $H_0^1(\Omega)$ the subspace of functions having a vanishing trace. We denote the corresponding dual space of $H_0^1(\Omega)$ by $H^{-1}(\Omega)$. 
Moreover, we write $L^r(0,T; X)$ for the Bochner space with the norm 
$$ \norm{v}_{L^r(0,T;X)} := \bigg( \int_0^T \norm{v}_X^r \dt \bigg)^{1/r}, \quad 1\leq r < \infty, \quad \norm{v}_{L^\infty(0,T;X)} := \sup_{0 \leq t \leq T} \norm{v}_X,$$
where $(X,\norm{\cdot}_X)$ is a Banach space. Also, we define $H^1(0,T;X) := \{ v \in L^2(0,T;X) : \partial_t v \in L^2(0,T;X) \}$.
To shorten notation, we define the spaces for the displacement $u$ and the pressure $p$ by
  \begin{equation*}
  V:=[H_0^1(\Omega)]^d,\quad Q:=H^1_0(\Omega).
  \end{equation*}
  
 \subsection{Variational formulation and discretization}
In this subsection, we present the variational formulation corresponding to the system \eqref{eq:model}. We first multiply the equations \eqref{eq:model1} and \eqref{eq:model2} with test functions from $V$ and $Q$, respectively. Then, applying Green's formula and making use of the boundary conditions \eqref{eq:init1} and \eqref{eq:init2}, we obtain the following variational problem: find $u(t, \cdot)\in V$ and $p(t, \cdot)\in Q$ such that
\begin{subequations}\label{eq:weak}
	\begin{alignat}{3}
   a(u,v) - d(v,p) &= 0, \label{eqn:v1} \\
   d(\partial_t u,q) + c(\partial_t p,q) + b(p,q) &= (f,q), \label{eqn:v2}      
\end{alignat} 
for all $v\in V$, $q\in Q$, and  
\begin{alignat}{3}
p(0,\cdot)=p^0 \in Q. \label{eqn:v3}
\end{alignat}
\end{subequations}
The bilinear forms are defined by 
   \begin{eqnarray*}
   && a(u,v) := \int_{\Omega} \sigma(u) : \varepsilon(v)\dx, \quad b(p,q) := \int_{\Omega} \frac{\kappa}{\nu}\, \nabla p\cdot \nabla q \dx, \\
   && c(p,q) := \int_{\Omega} \frac{1}{M}\, p\, q\dx, \tand  d(u,q) := \int_{\Omega} \alpha\,  (\nabla \cdot u)q\dx.
\end{eqnarray*}      
Note that \eqref{eqn:v1} and \eqref{eqn:v3} can be used to define a consistent initial value $u^0:= u(0,\cdot) \in V$. 
With Korn's inequality \cite{BrennerScott, ciarlet1988mathematical}, there exist two constants $c_\sigma$ and $C_\sigma$ such that 
$$ c_\sigma \norm{v}_1^2 \leq a(v,v) =: \norm{v}^2_a \leq C_\sigma \norm{v}_1^2$$
for all $v \in V$, where $c_\sigma$ depends on $\operatorname{essinf}_{x\in \Omega}\mu(x)$ while  $C_\sigma$ depends on $\operatorname{esssup}_{x\in \Omega} \mu(x)$ and $\operatorname{esssup}_{x\in \Omega} \lambda(x)$. Similarly, there exist two positive constants $c_\kappa$ and $C_\kappa$ such that 
$$ c_\kappa \norm{q}_1^2 \leq b(q,q) =: \norm{q}^2_b \leq C_\kappa \norm{q}_1^2$$ 
for all $q \in Q$. Here, $c_\kappa$ depends on $\operatorname{essinf}_{x\in \Omega}\kappa(x)$ and $C_\kappa$ depends on $\operatorname{esssup}_{x\in \Omega} \kappa(x)$. 
We remark that the well-posedness result of \eqref{eq:weak} can be found in \cite{showalter2000diffusion}. 

Here are some notations that would be used later in this article.  Let $V^\prime$ (resp., $Q^\prime$) be the dual space of $V$ (resp., $Q$) with the duality product denoted by $\langle \cdot, \cdot\rangle_a$ (resp., $\langle \cdot, \cdot\rangle_b$) and norm $\|\cdot\|_{a^\prime}=\sup_{0\not=v\in V}|\langle \cdot, v\rangle_a|/\|v\|_a$ (resp., $\|\cdot\|_{b^\prime}=\sup_{0\not=q\in Q}|\langle \cdot, q\rangle_b|/\|q\|_b$). 
$b(\cdot, \cdot) $ is a continuous bilinear form with continuity constant $\beta_1$, i.e., for all $(v,q)\in V\times Q$, $|d(v,q)|\leq \beta_1 \|v\|_a\|q\|_c$, where $\beta_1 =\max_{x\in\Omega}\{\alpha(M/\lambda)^{\frac{1}{2}}\}$ and $\|\cdot\|_c$ is induced by $c(\cdot,\cdot)$. For all $q\in Q$, it holds that $\|q\|_c\leq \beta_2 \|q\|_b$, where $\beta_2=\frac{C_p}{M}$ and $ C_p$ is the  Poincar\'e constant.

We use a temporal discretized approach as a reference solution. For the time discretization, let $\tau$ be a uniform time step and define $t_n=n\tau$ for $n \in \{  0,1,\cdots,N \}$ and $T=N\tau$. The semi-discretization in time by the backward Euler method yields the following semi-discrete problem: given initial data $p^0$ and $u^0$, find $\{ u^n \}_{n=1}^N \subset V$ and $\{ p^n \}_{n=1}^N \subset Q$ such that
\begin{subequations}\label{eq:semi}
	\begin{alignat}{3}
	a(u^n,v) - d(v,p^n) &= 0, \label{eq:semi_a}\\
	d(D_{\tau}u^n,q) + c(D_{\tau}p^n,q) + b(p^n,q) &= ( f^n,q ),      \label{eq:semi_b}
	\end{alignat} 
\end{subequations}
for all $v\in V$ and $q\in Q$. 
Here, $D_{\tau}$ denotes the discrete time derivative, i.e., $D_{\tau}u^n:=(u^n-u^{n-1})/\tau$ and $f^n:=f(t_n)$.

To fully discretize the variational problem \eqref{eq:weak}, we introduce a conforming partition $\mathcal{T}^h$ for the computational domain 
$\Omega$ with (local) grid sizes $h_{K}:=\text{diam}(K)$ for $K\in \mathcal{T}^h$ and ${h:=\max_{K\in \mathcal{T}^h}h_K}$. We remark that $\mathcal{T}^h$ is referred to as the \textit{fine grid}. Next, let $V_h$ and $Q_h$ be the standard finite element spaces of first order with respect to the fine grid $\mathcal{T}^h$, i.e.,
$$ V_h := \{ v = (v_i)_{i=1}^d \in V: \text{each} ~ v_i \lvert_K \text{ is a polynomial of degree} \leq 1 \text{ for all } K \in \mathcal{T}^h \}, $$
$$ Q_h := \{ q \in Q: q\lvert_K \text{ is a polynomial of degree} \leq 1 \text{ for all } K \in \mathcal{T}^h \}. $$
Let $N_{V,\text{fine}}$ and $N_{Q,\text{fine}}$ be the dimension of $V_h$ and $Q_h$, respectively.
The fully discretization of \eqref{eq:weak} read as follows: given $p_h^0$ and $u_h^0$, find $\{ u_h^n \}_{n=1}^N \subset V_h$ and $\{ p_h^n \}_{n=1}^N \subset Q_h$ such that
\begin{subequations}\label{eq:weak1}
	\begin{alignat}{3}
	a(u_h^n,v) - d(v,p_h^n) &= 0, \label{eq:weak1_a}\\
	d(D_{\tau}u_h^n,q) + c(D_{\tau}p_h^n,q) + b(p_h^n,q) &= (f^n,q),      \label{eq:weak1_b}
	\end{alignat} 
\end{subequations}
for all $v\in V_h$ and $q\in Q_h$. 
Here, the initial value $p_h^0\in Q_h$ is set to be the $L^2$
projection of $p^0\in Q$. The initial value $u_h^0$ for the displacement can be obtained by solving 
\begin{equation} \label{eq:initial_u}
a(u_h^0,v)=d(v,p_h^0)
\end{equation}
for all $v \in V_h$.

\section{Offline Multiscale Method}\label{sec:offline}
In this section, we briefly present the CEM-GMsFEM for the linear heterogeneous poroelasticity model \cite{fu2019computational}. The method generates multiscale spaces that are used at the offline stage simulation in our proposed approach. The construction of the multiscale spaces consists of two steps. The first step is to construct auxiliary function spaces. Based on the auxiliary spaces, we can then construct multiscale spaces containing basis functions whose energy are minimized in some subregions of the domain. Then, a multiscale solution can be computed using these energy-minimized basis functions.

Let $\mathcal{T}^H$ be a conforming partition of the computational domain $\Omega$ such that $\mathcal{T}^h$ is a refinement of $\mathcal{T}^H$.
We call $\mathcal{T}^H$ the \textit{coarse grid} and each element of $\mathcal{T}^H$ is called a coarse block. We denote $H:=\displaystyle{\max_{K\in \mathcal{T}^H}\text{diam}(K)}$ the coarse grid size. 
Let $\Nv$ be the total number of (interior) nodes of $\mathcal{T}^H$ and $\Ne$ be the total number of coarse elements. We remark that the coarse element $K \in \mathcal{T}^H$ is a closed subset (of the domain $\Omega$) with nonempty interior and piecewise smooth boundary. 
Let $\{x_i\}_{i=1}^{\Nv}$ be the set of nodes in $\mathcal{T}^H$. 

\subsection{Auxiliary spaces}
The CEM-GMsFEM starts with the auxiliary basis functions by solving spectral problems on each coarse element $K_i$ over the spaces $V(K_i):= {V \vert}_{ K_i}$ and $Q(K_i):= {Q \vert}_{ K_i}$. This step serves as a foundation of local model reduction. More precisely, we consider the local eigenvalue problems (of Neumann type): find $(\lambda_j^i,v_j^i)\in \mathbb{R}\times V(K_i)$ such that
\begin{equation}\label{eq:eig1}
a_i(v_j^i,v)=\lambda_j^i s^1_i(v_j^i,v)
\end{equation}
for all $v \in V(K_i)$ and find 
$(\zeta_j^i,q_j^i)\in \mathbb{R}\times Q(K_i)$ such that 
\begin{equation}\label{eq:eig2}
b_i(q_j^i,q)=\zeta_j^i s^2_i(q_j^i,q)
\end{equation}
for all $q \in Q(K_i)$, where 
$$a_i(u,v) := \int_{K_i} \sigma(u) : \varepsilon(v)\dx, \quad b_i(p,q) := \int_{K_i} \frac{\kappa}{\nu} \nabla p\cdot \nabla q \dx,$$ 
$$s^1_i(u,v):=\int_{K_i} \tilde{\sigma}u\cdot v\dx, \quad s^2_i(p,q):=\int_{K_i} \tilde{\kappa}pq\dx,$$ 
\begin{equation*}
\tilde\sigma := \sum_{i=1}^{\Nv}(\lambda+2\mu) | \nabla \chi_i^1 |^2,\tand
\tilde\kappa := \sum_{i=1}^{\Nv}\frac{\kappa}{\nu} | \nabla \chi_i^2 |^2.
\end{equation*}
The functions ${\chi_i^1}$ and ${\chi_i^2}$ are neighborhood-wise defined
partition of unity functions \cite{bm97} on the coarse grid. To be more precise, for $k \in \{1,2\}$ the function $\chi_i^{k}$ satisfies $H \abs{\nabla \chi_i^{k}} = O(1)$, $0 \leq \chi_i^{k+1} \leq 1$, and $\sum_{i=1}^{\Nv} \chi_i^{k} = 1$. One can take $\{ \chi_i^k \}_{i=1}^{\Nv}$ to be the set of  standard multiscale basis functions or the standard piecewise linear functions. We remark that the actual computation of \eqref{eq:eig1} and \eqref{eq:eig2} is done based on the underlying fine grid $\mathcal{T}^h$ on each coarse element $K_i$. 
 
Assume that the eigenvalues $\{\lambda_j^i\}$ (resp. $\{ \zeta_j^i \}$) are arranged in ascending order and that the eigenfunctions satisfy the normalization condition $s_i^1(v_j^i,v_j^i)=1$ (resp. $s_i^2(q_j^i,q_j^i)=1$) for any $i$ and $j$. 
Next, choose  $J_i^1\in \mathbb{N}^+$ and define the local auxiliary space $V_{\text{aux}}(K_i):= \text{span} \{v_j^{i}:1\leq j \leq J_i^1 \}$. 
Similarly, we choose $J_i^2 \in \mathbb{N}^+$ and define $Q_{\text{aux}}(K_i) := \text{span} \{q_j^i: 1 \leq j \leq J_i^2\}$. 
Based on these local spaces, we define the global auxiliary spaces $V_{\text{aux}}$ and $Q_{\text{aux}}$ by
$$V_{\text{aux}} := \bigoplus_{i=1}^{\Ne} V_{\text{aux}}(K_i) \quad \text{and} \quad Q_{\text{aux}} := \bigoplus_{i=1}^{\Ne} Q_{\text{aux}}(K_i).$$
The corresponding inner products for the global auxiliary multiscale spaces are defined by
\begin{eqnarray*}
s^1(u,v):=\sum_{i=1}^{\Ne}s_i^1(u,v) \tand 
s^2(p,q):=\sum_{i=1}^{\Ne}s_i^2(p,q) 
\end{eqnarray*}
for all $u,v \in V_{\text{aux}}$ and $p,q\in Q_{\text{aux}}$.

Further, we define projection operators $\pi_1: V \to V_{\text{aux}}$ and $\pi_2: Q \to Q_{\text{aux}}$ such that for all $v \in V$ and $q \in Q$ we have 
\begin{eqnarray*}
\pi_1(v):=\sum_{i=1}^{\Ne}\sum_{j=1}^{J_i^1}s^1_i(v,v_j^i)v_j^i \tand
\pi_2(q):=\sum_{i=1}^{\Ne}\sum_{j=1}^{J_i^2}s^2_i(q,q_j^i)q_j^i.
\end{eqnarray*}
We also denote the kernel of the operator $\pi_1$ and the kernel of the operator $\pi_2$ to be 
$$
\tilde{V}:=\{w\in V\mid \pi_1(w)=0\}\quad \text{and} \quad 
\tilde{Q}:=\{q\in Q\mid \pi_2(q)=0\}.
$$

\subsection{Multiscale spaces}
In this subsection, we construct the multiscale spaces for the practical computations. 
For each coarse element $K_i$, we define
the oversampled region $K_{i,\ell}\subset\Omega$ obtained by enlarging $K_i$ by $\ell$ layers, i.e.,
\begin{equation}
\label{eq:K_il}
 K_{i,0} := K_i, \quad K_{i,\ell} := \bigcup \left\{ K\in \mathcal{T}^H : K \cap  K_{i,\ell-1} \neq \emptyset \right\}, \quad \ell \in \mathbb{N}^+.
 \end{equation}
We define $V(K_{i,\ell}):=[H_0^1(K_{i,\ell})]^d$ and $Q(K_{i,\ell}):=H^1_0(K_{i,\ell})$. Then, for each pair of auxiliary functions $v_j^i\in V_{\text{aux}}$ and $q_j^i\in Q_{\text{aux}}$,
we solve the following  minimization problems:
find $\psi_{j,\text{ms}}^{(i)}\in V(K_{i,\ell})$ such that
\begin{equation}\label{eq:mineq1}
\psi_{j,\text{ms}}^{(i)}=\text{argmin}\Big\{a(\psi,\psi)+s^1\big(\pi_1(\psi)-v_j^i,\pi_1(\psi)-v_j^i\big):\,\psi\in V(K_{i,\ell})\Big\}
\end{equation}
and 
find $\phi_{j,\text{ms}}^{(i)} \in Q(K_{i,\ell})$ such that
\begin{equation}\label{eq:mineq2}
\phi_{j,\text{ms}}^{(i)}=\text{argmin}\Big\{b(\phi,\phi)+s^2\big(\pi_2(\phi)-q_j^i,\pi_2(\phi)-q_j^i\big):\,\phi\in Q(K_{i,\ell})\Big\}.
\end{equation}
Note that problem (\ref{eq:mineq1}) is equivalent to the local problem 
\begin{equation}\label{eq:mineq1_loc}
a(\psi_{j,\text{ms}}^{(i)},v)+s^1\left (\pi_1(\psi_{j,\text{ms}}^{(i)}),\pi_1(v)\right)=s^1\left(v_j^i,\pi_1(v)\right)
\end{equation}
for all $v\in V(K_{i,\ell})$, whereas problem (\ref{eq:mineq2}) is equivalent to
\begin{equation}\label{eq:mineq2_loc}
b(\phi_{j,\text{ms}}^{(i)},q)+s^2\left(\pi_2(\phi_{j,\text{ms}}^{(i)}),\pi_2(q)\right)=s^2\left(q_j^i,\pi_2(q)\right)
\end{equation}
for all $q\in Q(K_{i,\ell})$.
Finally, for fixed parameters $\ell$, $J_i^1$, and $J_i^2$, the multiscale spaces $V_{\text{ms}}$ and $Q_{\text{ms}}$ are defined by 
$$ V_{\text{ms}} := \text{span}\{ \psi_{j,\text{ms}}^{(i)}: 1\leq j \leq J_i^1,\ 1\leq i \leq \Ne \} \quad \text{and} \quad Q_{\text{ms}} :=\text{span} \{ \phi_{j,\text{ms}}^{(i)}: 1 \leq j \leq J_i^2,\ 1\leq i \leq \Ne\}.$$
In practice, the multiscale basis functions defined above are constructed based on the underlying fine grid imposed on the oversampled regions. See Appendix \ref{appen:basis} for more details about the actual implementation. 

We remark that the multiscale basis functions can be interpreted as approximations to global multiscale basis functions $\psi_j^{(i)}\in V$ and $\phi_j^{(i)}\in Q$, similarly defined by
 \begin{align}\label{eq:minglo1}
\psi_j^{(i)} &= \text{argmin}\Big\{a(\psi,\psi)+s^1\big(\pi_1(\psi)-v_j^i,\pi_1(\psi)-v_j^i\big): \,\psi\in V\Big\}, \\\label{eq:minglo2}
\phi_j^{(i)}&= \text{argmin}\Big\{b(\phi,\phi)+s^2\big(\pi_2(\phi)-q_j^i,\pi_2(\phi)-q_j^i\big): \,\phi\in Q\Big\}.
\end{align}
This is equivalent to solve the global problems
\begin{align}\label{eq:minglo1eqn}
a(\psi_{j}^{(i)},v)+s^1\big(\pi_1(\psi_{j}^{(i)}),\pi_1(v)\big)&=s^1\left(v_j^i,\pi_1(v)\right), \\
\label{eq:minglo2eqn}
b(\phi_{j}^{(i)},q)+s^2\left(\pi_2(\phi_{j}^{(i)}),\pi_2(q)\right)&=s^2\left(q_j^i,\pi_2(q)\right), 
\end{align}
for any $v \in V$ and $q \in Q$. 
These basis functions have global support in the domain $\Omega$ but, as shown in \cite{chung2017constraint}, decay exponentially outside some local (oversampled) region. This property plays a vital role in the convergence analysis of CEM-GMsFEM and justifies the use of local basis functions in $V_{\text{ms}}$ and $Q_{\text{ms}}$. 

\subsection{The CEM-GMsFEM method}
Once the multiscale spaces are constructed, the offline solution can be obtained by solving 
the following fully discrete scheme: find $\{ (u_{\text{ms}}^n,p_{\text{ms}}^n) \}_{n=1}^N \subset V_{\text{ms}} \times Q_{\text{ms}}$ such that 
\begin{subequations}\label{eq:weak2}
	\begin{alignat}{2}
	a(u_{\text{ms}}^n,v) - d(v,p_{\text{ms}}^n) &= 0, \\
	d(D_{\tau}u_{\text{ms}}^n,q) + c(D_{\tau}p_{\text{ms}}^n,q) + b(p_{\text{ms}}^n,q) &= (f^n,q),      
	\end{alignat} 
\end{subequations}
for all $(v,q) \in V_{\text{ms}} \times Q_{\text{ms}}$ with initial condition $u_{\text{ms}}^0 \in V_{\text{ms}}$ and $p_{\text{ms}}^0\in Q_{\text{ms}}$ defined by
$$b(p^0 - p_{\text{ms}}^0,q) = 0 \quad \text{and} \quad a(u_{\text{ms}}^0,v)=d(v,p_{\text{ms}}^0)$$
for all $q\in Q_{\text{ms}}$ and $v \in V_{\text{ms}}$. 

\section{Online Adaptive Method}\label{sec:online}
In this section, we present the online adaptive method for the poroelasticity problem within the CEM-GMsFEM. 
In general, one may need some post-processing procedures (such as adaptive enrichment of degrees of freedom) to achieve a better approximation in the multiscale space. 
The online algorithm starts with the construction of the online basis functions driven by the residual information. 
In practice, the online basis construction is implemented with an adaptive algorithm so that the online basis functions can help reduce the error to give a more accurate multiscale approximation. 

\subsection{Online basis functions}
We present the construction of the online basis functions for displacement and pressure variables. First, we define the residual functionals $r_n^1: V \to \mathbb{R}$ and $r_n^2 : Q\to \mathbb{R}$ at the time level $t_n$. Let $u_{\text{app}}$ and  $p_{\text{app}}$ be the current numerical approximations of displacement and pressure, respectively. That is, $u_{\text{app}} := u_{\text{ms}}^n$ and $p_{\text{app}} := p_{\text{ms}}^n$, which are obtained from \eqref{eq:weak2}. 
The residual functionals (at the time level $t_n$) are defined to be
\begin{equation}
\begin{split}
r_n^1(v) &:= d(v, p_{\text{app}}) - a(u_{\text{app}},v),  \\
r_n^2(q) &:= (f^n, q) - b(p_{\text{app}}, q) - c(D_\tau p_{\text{app}}, q) - d(D_\tau u_{\text{app}},q),
\end{split}
\label{eqn:glo_residual}
\end{equation}
for any $v \in V$ and $q \in Q$. 
In order to develop local error indicators, 
we also consider local residuals. For each coarse node $x_i$, we define a coarse neighborhood $\omega_i := \bigcup \{ K: x_i \in K, ~ K \in \mathcal{T}^H \}$. For each coarse neighborhood $\omega_i$, we define the local residual functionals $r_{n,i}^1: V  \to \mathbb{R}$ and $r_{n,i}^2 : Q \to \mathbb{R}$ such that 
\begin{equation}
r_{n,i}^1 (v) := r_{n}^1(\chi_{\omega_i}^1 v) 
\quad \text{and} \quad 
r_{n,i}^2(q) := r_{n}^2 (\chi_{\omega_i}^2 q)
\label{eqn:loc_residual}
\end{equation}
for any $v \in V$ and $q \in Q$. The functions ${\chi_{\omega_i}^1}$ and ${\chi_{\omega_i}^2}$ are neighborhood-wise defined
partition of unity functions \cite{bm97} on the coarse neighborhood $\omega_i$. 

Next, we define the online basis functions for displacement and pressure. 
The construction of the online basis function is related to the local residual. 
For any coarse neighborhood $\omega_i$ associated with the coarse node $x_i$, we define $\omega_{i,\ell}$ such that 
\begin{equation}
\label{eq:omega_i_neighbor}
 \omega_{i,\ell} := \left \{ \begin{array}{ll}
\omega_i & \ell = 0, \\
\displaystyle{\bigcup} \{ K \in \mathcal{T}^H : \omega_{i,l-1} \cap K \neq \emptyset \} & \ell \geq 1,
\end{array} \right.
\end{equation}
for any nonnegative integer $\ell \in \mathbb{N}$. 
For simplicity, we denote $\omega_i^+ = \omega_{i,\ell}$ the oversampled region with respect to $\omega_i$ with $\ell$ level of spatial enrichment.

Using the local residuals, we define the online basis functions $\delta_\ms^{(i)} \in V(\omega_i^+) := \left [ H_0^1(\omega_i^+) \right ]^d$ (for displacement) and $\rho_{\text{ms}}^{(i)} \in Q(\omega_i^+) := H_0^1(\omega_i^+)$ (for pressure) such that they are both supported in the oversampled region $\omega_i^+$. 
More precisely, the online basis functions $\delta_{\ms}^{(i)} \in V(\omega_i^+)$ and $\rho_{\ms}^{(i)} \in Q(\omega_i^+)$ are defined to be the solutions of the following (quasi-local) cell problems: 
\begin{equation}
\begin{split}
a (\delta_{\ms}^{(i)}, v) + s^1\left (\pi_1 (\delta_{\ms}^{(i)}), \pi_1(v) \right )&= r_{n,i}^1 (v) \quad \text{ for all } v \in V(\omega_i^+), \\
b(\rho_{\ms}^{(i)}, q) + s^2 \left ( \pi_2 ( \rho_{\ms}^{(i)}), \pi_2 (q) \right ) &= r_{n,i}^2(q) \quad \text{ for all } q \in Q(\omega_i^+).
\end{split}
\label{eqn:loc_online_basis}
\end{equation}
We remark that the online basis functions satisfy the homogeneous boundary condition on the boundary of the oversampled region since the global variational formulation is discretized within the continuous Galerkin setting. One can also construct discontinuous type of online basis functions (see \cite{pun2021online} for more details). 

In fact, these functions are localizations of the corresponding global online basis functions $\delta_{\glo}^{(i)}\in V$ and $\rho_{\glo}^{(i)}\in Q$ satisfying 
\begin{equation}
\begin{split}
a (\delta_{\glo}^{(i)}, v) + s^1\left (\pi_1 (\delta_{\glo}^{(i)}), \pi_1(v) \right )&= r_{n,i}^1 (v) \quad \text{ for all } v \in V, \\
b(\rho_{\glo}^{(i)}, q) + s^2 \left ( \pi_2 ( \rho_{\glo}^{(i)}), \pi_2 (q) \right ) &= r_{n,i}^2(q) \quad \text{ for all } q \in Q.
\end{split}
\label{eqn:glo_online_basis}
\end{equation}

We can construct local online basis functions for each $ r_{n,i}^1 (v)$ and $ r_{n,i}^2 (v)$ or for some selected $ r_{n,i}^1 (v)$ and $ r_{n,j}^2 (v)$ (with $i\in \mathcal{I}_1$ and $j\in \mathcal{I}_2$ for some index sets $\mathcal{I}_1$ and $\mathcal{I}_2$). After constructing the online basis functions, we can add them into our multiscale spaces for enrichment of degrees of freedom. 

In practice, similar to the one presented in \cite{chung2018fast}, one has to divide the actual construction of the online basis functions into several sub-iterations, such that the selected coarse neighborhoods are mutually exclusive in each sub-iteration. 
We also remark that one can construct the online basis functions on coarse oversampled regions $K_{i,\ell}$, which is an oversampled version of coarse element $K_i$ with oversampling parameter $\ell$. It leads to a simpler algorithmic implementation since any two coarse elements whose interior parts are non-overlapped. We will compare these two different strategies in the next section. 

\subsection{Online adaptive enrichment}\label{sec:algorithm}
In this section, we present an online adaptive algorithm with enrichment of online basis functions defined in the previous section. Once the online basis functions are constructed, we include those newly constructed functions into the multiscale space. For time-dependent problem like poroelasticity model, one may need online enrichment every several time levels (or whenever the global residual is larger than some given threshold). With this enriched space, we can compute a new numerical solution by solving the equation \eqref{eq:weak2}. 

First, we initially set the multiscale spaces to be $V_{\ms}^{(0)} := V_{\ms}$ and $Q_{\ms}^{(0)} := Q_{\ms}$. 
Next, we choose parameters $\theta \in [0, 1)$ and $\gamma \in [0,1)$, which determine the portion of online basis functions that are included in the spaces during each iteration. Let $V_{\ms}^{(k)}$ and $Q_{\ms}^{(k)}$ be the multiscale spaces after $k$ times of enrichments.
The online adaptive algorithm sequentially defines residual functionals by taking $u_{\text{app}} = u_{\ms}^{n,k}$ and $p_{\text{app}} = p_{\ms}^{n,k}$ in \eqref{eqn:glo_residual} with $u_{\ms}^{n,k}$ and $p_{\ms}^{n,k}$ being the solutions of \eqref{eq:weak2} over the multiscale spaces $V_{\ms}^{(k)}$ and $Q_{\ms}^{(k)}$. That is, we define $r_{n}^{1,k}$ and $r_{n}^{2,k}$ the global residual operators at $k$-th level of enrichment such that 
\begin{equation}
\begin{split}
r_n^{1,k}(v) &:= d(v, p_{\ms}^{n,k}) - a(u_{\ms}^{n,k},v),  \\
r_n^{2,k}(q) &:= (f^n, q) - b(p_{\ms}^{n,k}, q) - c(D_\tau p_{\ms}^{n,k}, q) - d(D_\tau u_{\ms}^{n,k},q), 
\end{split}
\label{eqn:glo_residual_k}
\end{equation}
for any $v \in V$ and $q \in Q$ and their associated localization
\begin{equation}
r_{n,i}^{1,k} (v) := r_n^{1,k}(\chi_{\omega_i}^1 v) \tand r_{n,i}^{2,k} (q) := r_n^{2,k}(\chi_{\omega_i}^2 q)
\label{eqn:loc_residual_k}
\end{equation}
for any $v \in V$ and $q \in Q$. 
This online adaptive method enriches the multiscale spaces 
$V_{\ms}^{(k)} \subset V_{\ms}^{(k+1)}$ and $Q_{\ms}^{(k)} \subset Q_{\ms}^{(k+1)}$ by adding online basis functions defined in \eqref{eqn:loc_online_basis}, and 
generates the updated multiscale solutions $u_{\ms}^{n,k+1} \in V_{\ms}^{(k+1)}$ and $p_{\ms}^{n,k+1} \in Q_{\ms}^{(k+1)}$ by Galerkin projection. We also define two local residuals $z_{n,i}^{1,k}$ and $z_{n,i}^{2,k}$ to be 
\begin{equation}
z_{n,i}^{1,k} (v) := r_n^{1,k}(\mathbf{1}_{\omega_i} v) \tand z_{n,i}^{2,k} (q) := r_n^{2,k}(\mathbf{1}_{\omega_i} q)
\label{eqn:loc_residual_indi}
\end{equation}
for any $v \in V$ and $q \in Q$. Here, $\mathbf{1}_{\omega_i}$ is the indicator function of the coarse neighborhood $\omega_i$. 
The complete procedure of the (neighborhood-based) online adaptive algorithm is listed in Algorithm \ref{algo:online}.

\begin{algorithm}[ht]
	\caption{Neighborhood-based Online adaptive algorithm for poroelasticity model}
	\begin{algorithmic}[1]
	\STATE {\bf Input:} A given time level $t^n$, a source term $f^n$, current approximation $(u_{\text{ms}}^n, p_{\text{ms}}^n)$, current multiscale spaces $V_{\text{ms}}$ and $Q_{\text{ms}}$, parameters $(\theta, \gamma, \ell) \in [0,1) \times [0,1) \times \mathbb{N}$, a tolerance accuracy $\texttt{Tol} >0$, and a small threshold $\varepsilon >0$. 
	\STATE Set $k = 0$, $V_{\ms}^{(k)} = V_{\ms}$, $Q_{\ms}^{(k)} = Q_{\ms}$, $u_{\text{ms}}^{n,k} = u_{\text{ms}}^n$, and $p_{\text{ms}}^{n,k} = p_{\text{ms}}^n$. 
	\STATE Compute global residual $\eta := \norm{r_n^{1,k}}_{a'} + \norm{r_n^{2,k}}_{b'}$. 
	\WHILE{$\abs{\eta - \texttt{Tol}} \geq \varepsilon$}
		\FOR{$i \in \{ 1, \cdots, \Nv \}$} 
			\STATE compute $\eta_{n,i}^{1,k} := \norm{z_{n,i}^{1,k}}_{a'}$ and $\eta_{n,i}^{2,k} := \norm{z_{n,i}^{2,k}}_{b'}$ (defined in \eqref{eqn:loc_residual_indi}). 
		\ENDFOR
		\STATE Enumerate the indices such that $\eta_{n,1}^{1,k} \geq \eta_{n,2}^{1,k} \geq \cdots \geq \eta_{n, \Nv}^{1,k}$ and $\eta_{n,1}^{2,k} \geq \eta_{n,2}^{2,k} \geq \cdots \geq \eta_{n, \Nv}^{2,k}$. 
		\STATE Find the smallest integers $m_1 = m_1(k)$ and $m_2 = m_2(k)$ such that 
		$$\sum_{i=m_1+1}^{\Nv} \left (\eta_{n,i}^{1,k} \right )^2 < \theta \sum_{i=1}^{\Nv} \left (\eta_{n,i}^{1,k} \right )^2 \tand 
		\sum_{i=m_2+1}^{\Nv} \left (\eta_{n,i}^{2,k} \right )^2 < \gamma \sum_{i=1}^{\Nv} \left (\eta_{n,i}^{2,k} \right )^2.$$
		\FOR{$i \in \{1,\cdots, m_1\}$} 
			\STATE Construct $\delta_{n}^{(i,k)}$ by solving 
			$a (\delta_{n}^{(i,k)}, v) + s^1\left (\pi_1 (\delta_{n}^{(i,k)}), \pi_1(v) \right )= r_{n,i}^{1,k} (v)$ for any $v \in V_h^0(\omega_i^+).$
		\ENDFOR
		\FOR{$i \in \{1,\cdots, m_2\}$} 
			\STATE Construct $\rho_{n}^{(i,k)}$ by solving 
			$b (\rho_{n}^{(i,k)}, q) + s^2\left (\pi_2 (\rho_{n}^{(i,k)}), \pi_2(q) \right )= r_{n,i}^{2,k} (q)$ for any $q \in Q_h^0(\omega_i^+).$
		\ENDFOR		
		\STATE Set $V_{\ms}^{(k+1)} = V_{\ms}^{(k)} \bigoplus \spa\left \{ \delta_{n}^{(i,k)}\right \}_{i=1}^{m_1}$ and $Q_{\ms}^{(k+1)} = Q_{\ms}^{(k)} \bigoplus \spa\left \{ \rho_{n}^{(i,k)}\right \}_{i=1}^{m_2}$. 
		\STATE Obtain $(u_{\ms}^{n,k+1}, p_{\ms}^{n,k+1})\in V_{\ms}^{(k+1)} \times Q_{\ms}^{(k+1)}$ by solving \eqref{eq:weak2} over the new spaces. 
		\STATE Update global residual $\eta'$ with new approximation $(u_{\ms}^{n,k+1}, p_{\ms}^{n,k+1})$. 
		\IF{$\abs{\eta - \eta'} \leq \varepsilon$} 
			\STATE Break the online enrichment process. 
		\ENDIF
		\STATE Set $k \leftarrow k+1$. 
	\ENDWHILE
	\STATE {\bf Output:} A (possibly) new multiscale approximation $(u_{\ms}^{n,k}, p_{\ms}^{n,k})$. Move to $t^{n+1}$. 
	\end{algorithmic}
	\label{algo:online}
\end{algorithm}

\section{Numerical Results}\label{sec:num}

In this section, we present some numerical results obtained by using the proposed online adaptive method. The following notations are used throughout this section. 
We denote $n$ the level of time discretization and $k$ the level of online enrichment. 
We also denote $u_{\text{dof}}^k$ and $p_{\text{dof}}^k$ the degrees of freedom for displacement and pressure, respectively. The
(relative) energy errors between the multiscale and the fine-scale solutions defined below 
by$$e_u :=\frac{\norm{u_{\ms}^{n,k}-u_h^n}_a}{\norm{u_h^n}_a} \tand e_p :=\frac{\norm{p_{\ms}^{n,k}-p_h^n}_b}{\norm{p_h^n}_b}$$
will be served as indicators evaluating the performance of algorithm. We remark that the errors $e_u$ and $e_p$ depend on the indices $n$ and $k$ but we simply omit them for simplicity. 
Moreover, instead of setting the thresholding parameters (i.e., \texttt{Tol} and $\varepsilon$ in the algorithm) in the numerical experiments, only fixed number of iterations will be executed for simplifying the algorithmic implementation. 

\begin{figure}[htbp!]
\centering
\includegraphics[width = 2.3in]{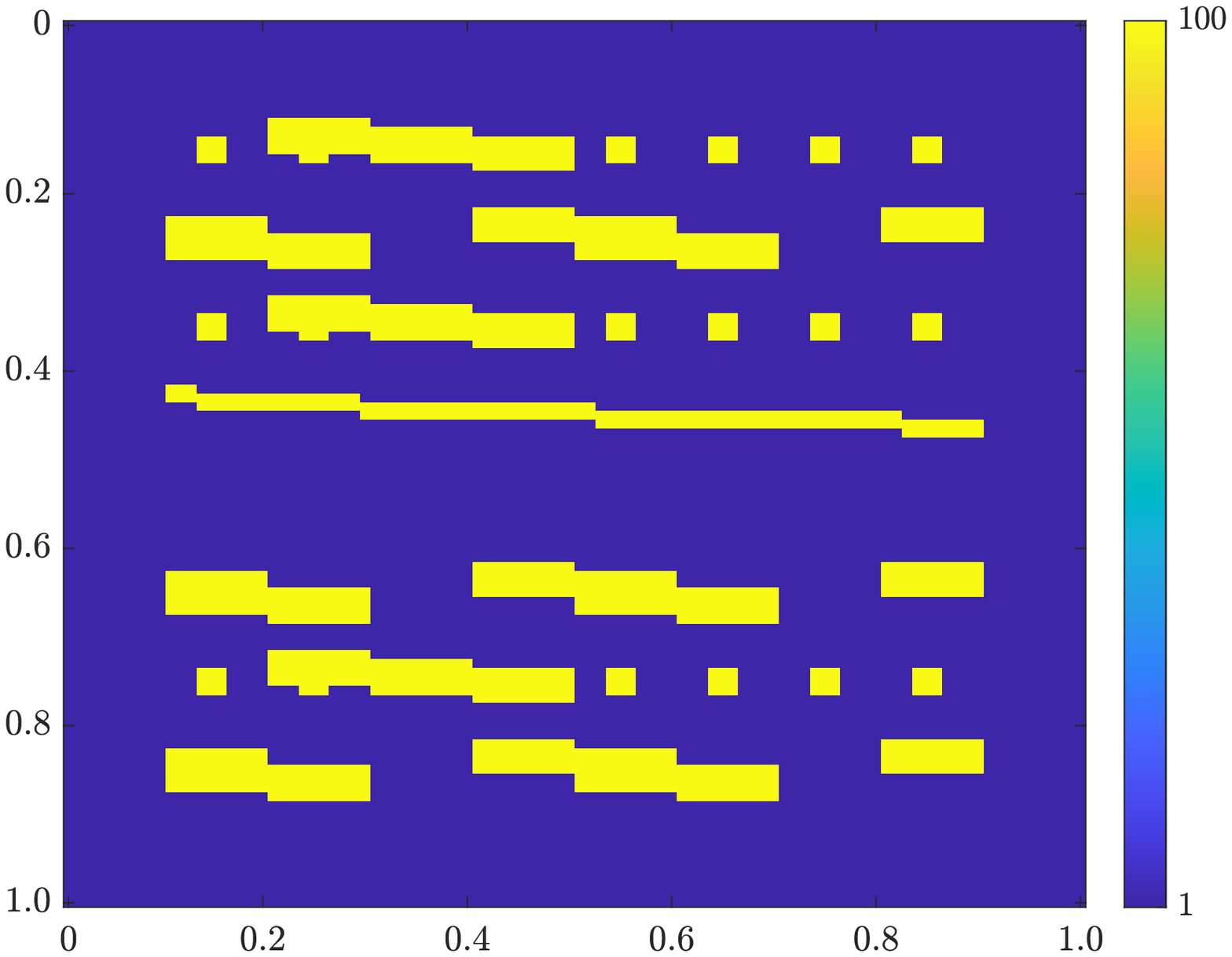}
\quad 
\includegraphics[width = 2.3in]{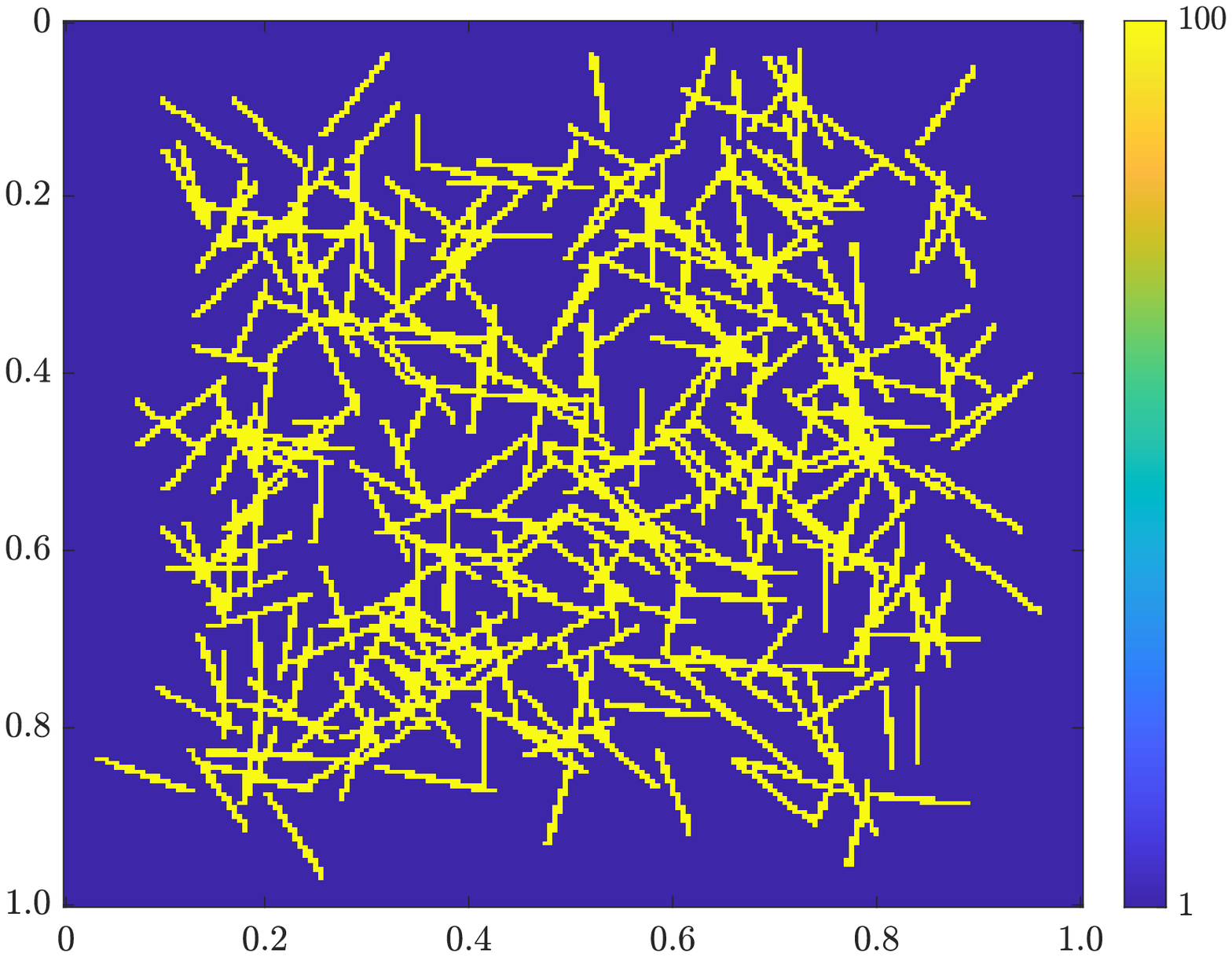}
\caption{Young's Modulus of Example \ref{exp:1} and \ref{exp:2}.}
\label{fig:ym_exp1}
\end{figure}

\begin{example}\label{exp:1}
The problem setting in this example reads as follows: 
\begin{enumerate}
\item We set $\Omega = (0,1)^2$, $\alpha = 0.9$, $M = 1$, $\nu_p = 0.2$ (Poisson ratio), and $\nu = 1$. 
\item The terminal time is $T = 1$. The time step is $\tau = 0.05$, i.e., $N = 20$. 
\item The Young's modulus $E$ is depicted in Figure \ref{fig:ym_exp1}. We set the permeability to be $\kappa = E$. 
\item The source function is $f \equiv 1$; the initial condition is $p(x_1, x_2) = 100x_1(1-x_1)x_2(1-x_2)$. 
\item The number of (local) offline (pressure and displacement) basis functions is $J = J_i^1 = J_i^2 = 2$ and the number of oversampling layers is $\ell = 2$ for both the offline and online stages. 
\item The coarse mesh is $10 \times 10$ and the overall fine mesh is $100 \times 100$. 
\item In this example, we perform the online enrichment only at the final time level $t = T = 1$ with online tolerance $\theta = \gamma=30\%$ and $\theta = \gamma=70\%$. 
\end{enumerate}
\end{example}
\noindent
In this example, we test and compare two different strategies for the online basis enrichment. The first approach is to enrich the multiscale space by traversing the oversampled regions $K_{i,\ell}$ (defined in \eqref{eq:K_il}) related to coarse elements $K_i$. 
We refer to this approach as the element-based strategy. 
The second one, called neighborhood-based strategy (see also Algorithm \ref{algo:online}), is traversing the oversampled regions $\omega_{i,\ell}$ (defined in \eqref{eq:omega_i_neighbor}) related to the coarse neighborhoods $\omega_i$. 
Within the neighborhood-based strategy, the actual computation of the online basis functions consists of several sub-iterations such that the selected coarse neighborhoods are mutually exclusive in each sub-iteration. 

Figure \ref{fig:5_1_soln_profile} depicts the solution profiles at the terminal time. In Tables \ref{exp:5_1_07_K} and \ref{exp:5_1_03_K}, we present the energy errors of the displacement and pressure with the degrees of freedom when using the element-based strategy with $\theta=\gamma=0.7$ and $\theta=\gamma=0.3$. Tables \ref{exp:5_1_07_w} and \ref{exp:5_1_03_w} show the energy errors along with the degrees of freedom using the neighborhood-based strategy. Both strategies lead to moderately fast convergence in terms of energy errors (within a few iterations) with these setting of algorithmic parameters. We observe that for any fixed strategy, choosing a smaller online tolerance parameter can result in a sharper error decay rate. 
Figures \ref{fig:5_1_07} and \ref{fig:5_1_03} show the trends of the energy errors against the number of online iterations. From these two figures we can see that the neighborhood-based strategy outperforms another one as it can achieve a good accuracy with fewer iterations.

We remark that one achieves $e_u=2.59\%$ and $e_p=6.83\%$ when the offline degrees of freedom are $(u_{\text{dof}}^{k}, p_{\text{dof}}^{k})=(2000, 2000)$, which is of full capability of offline multiscale approximation. Compared to this, using the online adaptive method to refine the multiscale approximation can achieve same level of accuracy with much fewer degrees of freedom.

\begin{figure}[htbp!]
\centering
\includegraphics[width = 1.7in]{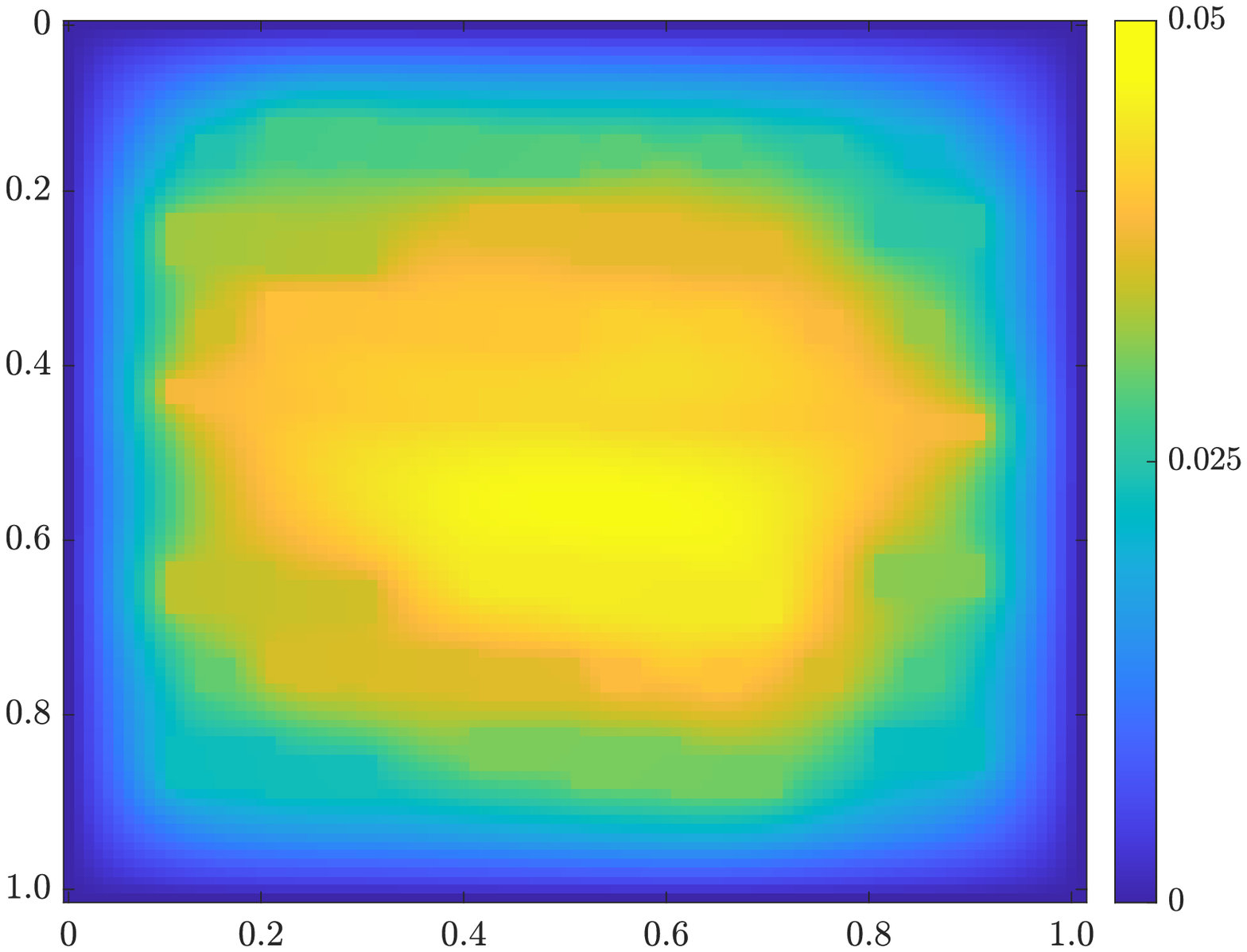} \quad
\includegraphics[width = 1.7in]{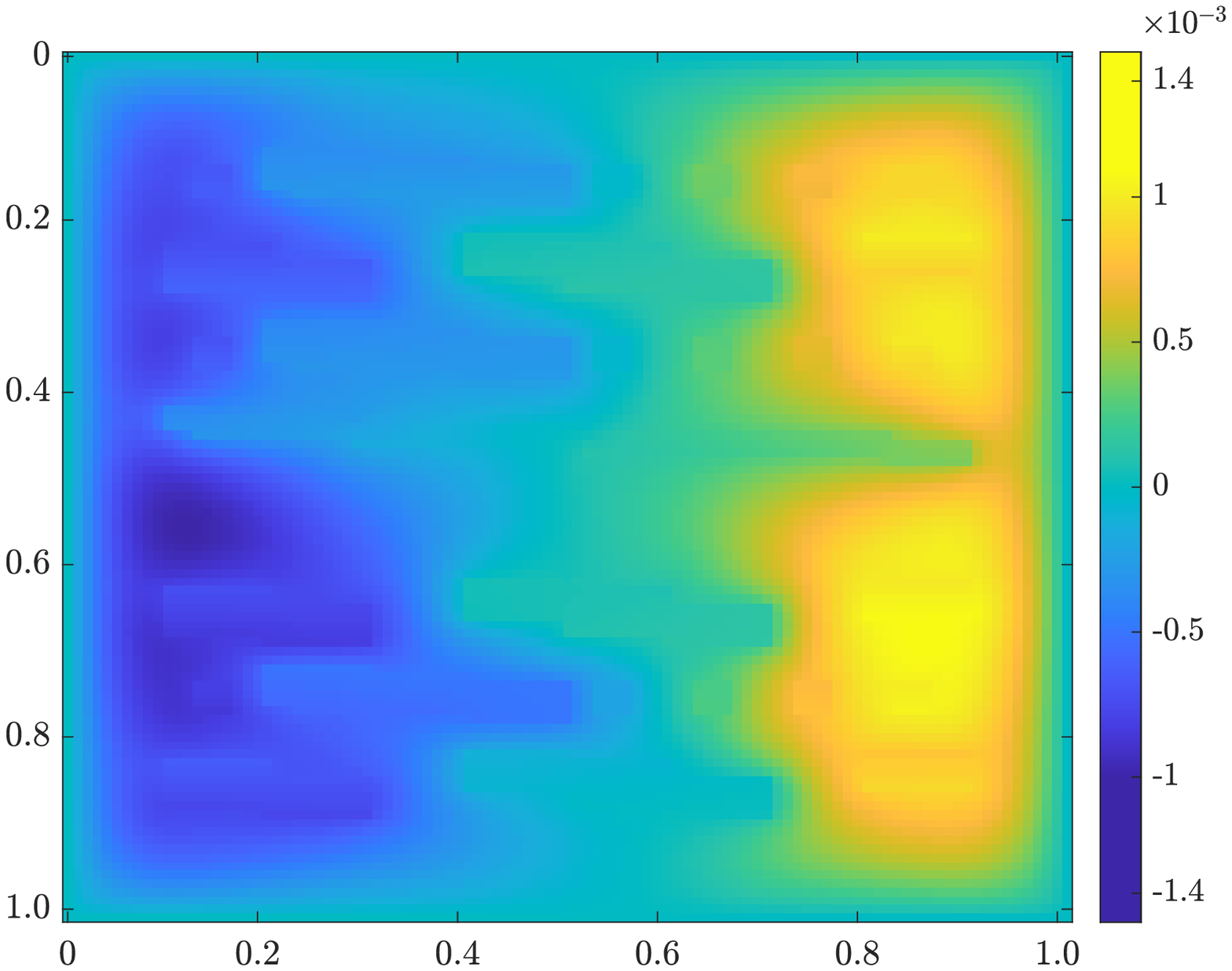}\quad
\includegraphics[width = 1.7in]{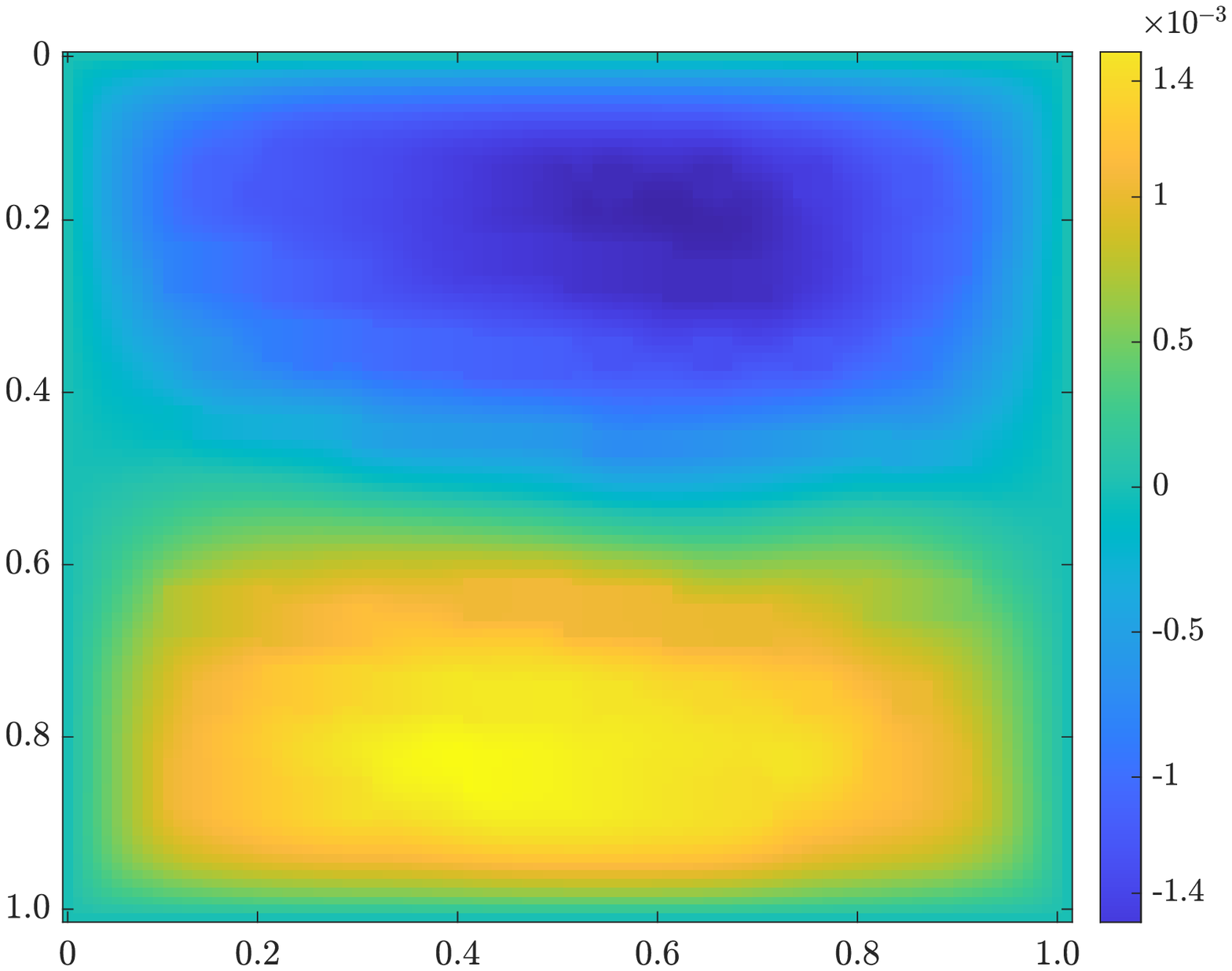}
\includegraphics[width = 1.7in]{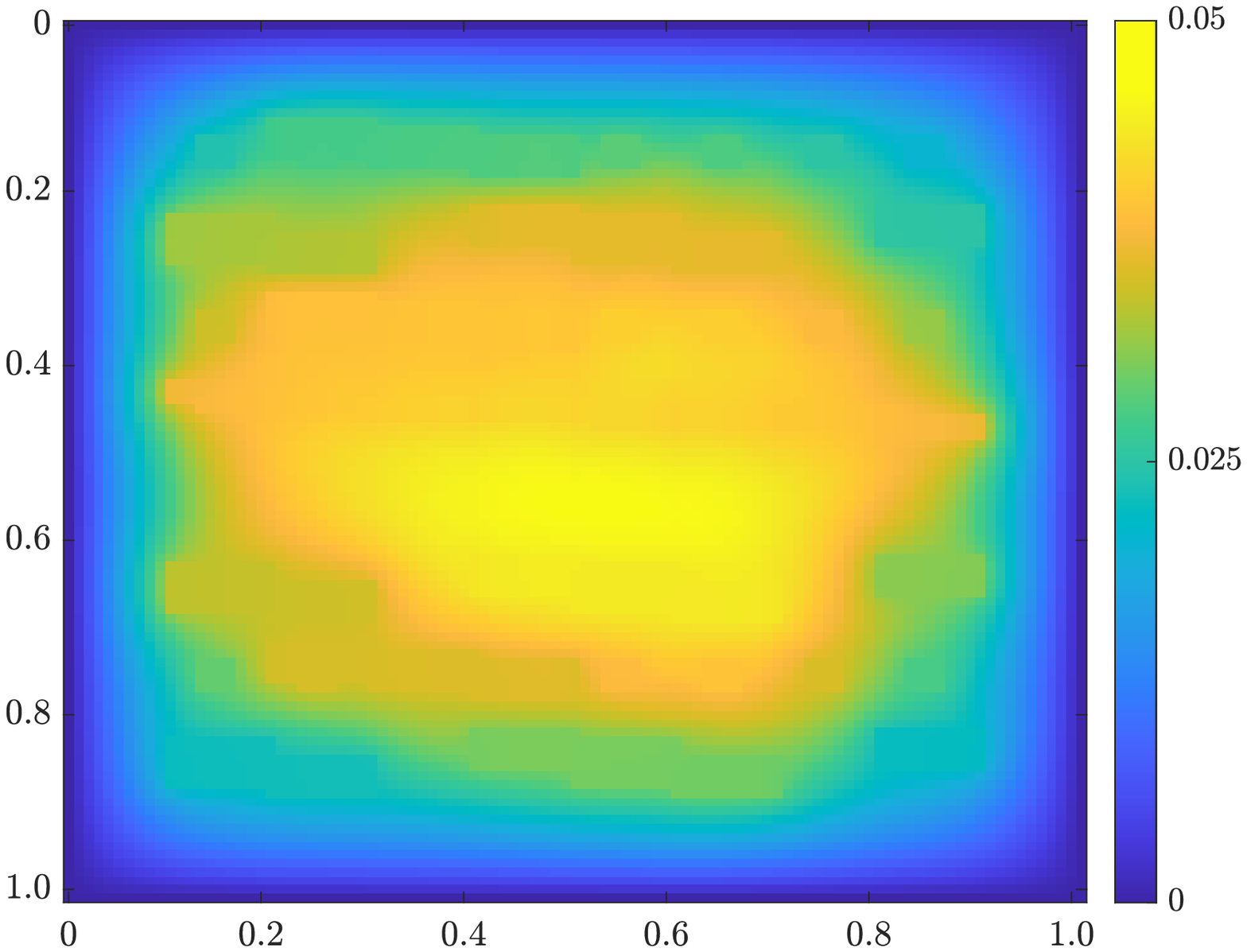}\quad
\includegraphics[width = 1.7in]{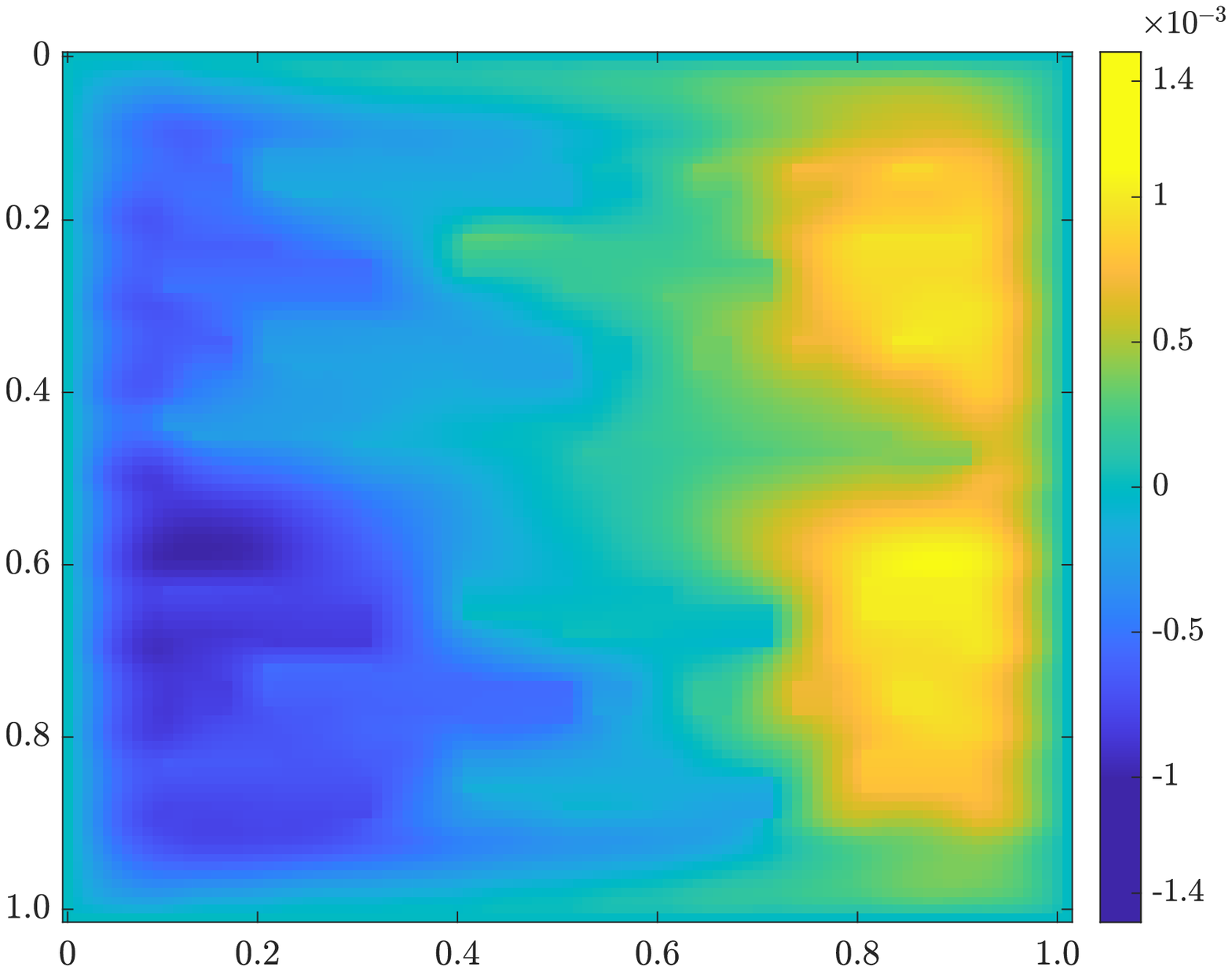}\quad
\includegraphics[width = 1.7in]{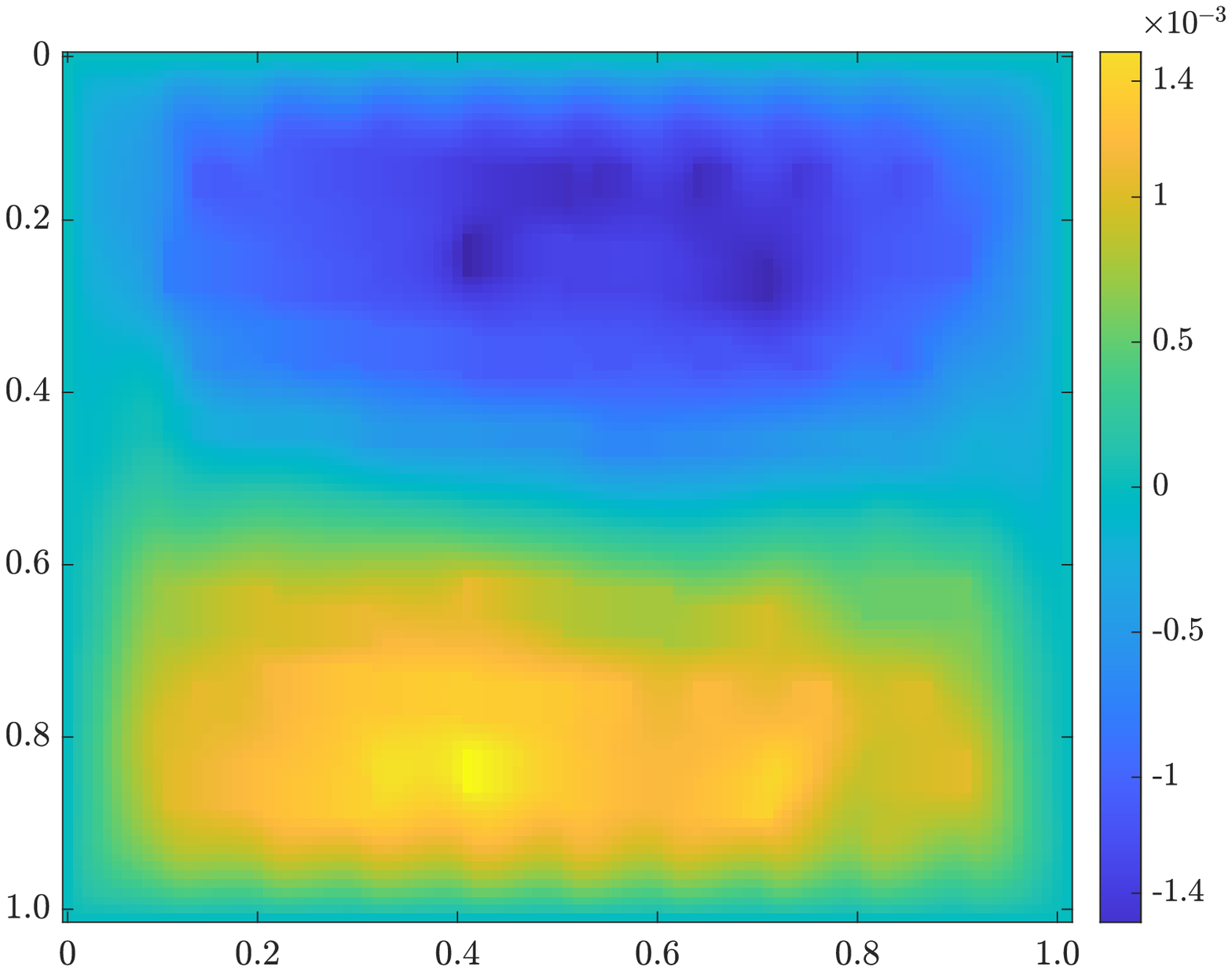}
\includegraphics[width = 1.7in]{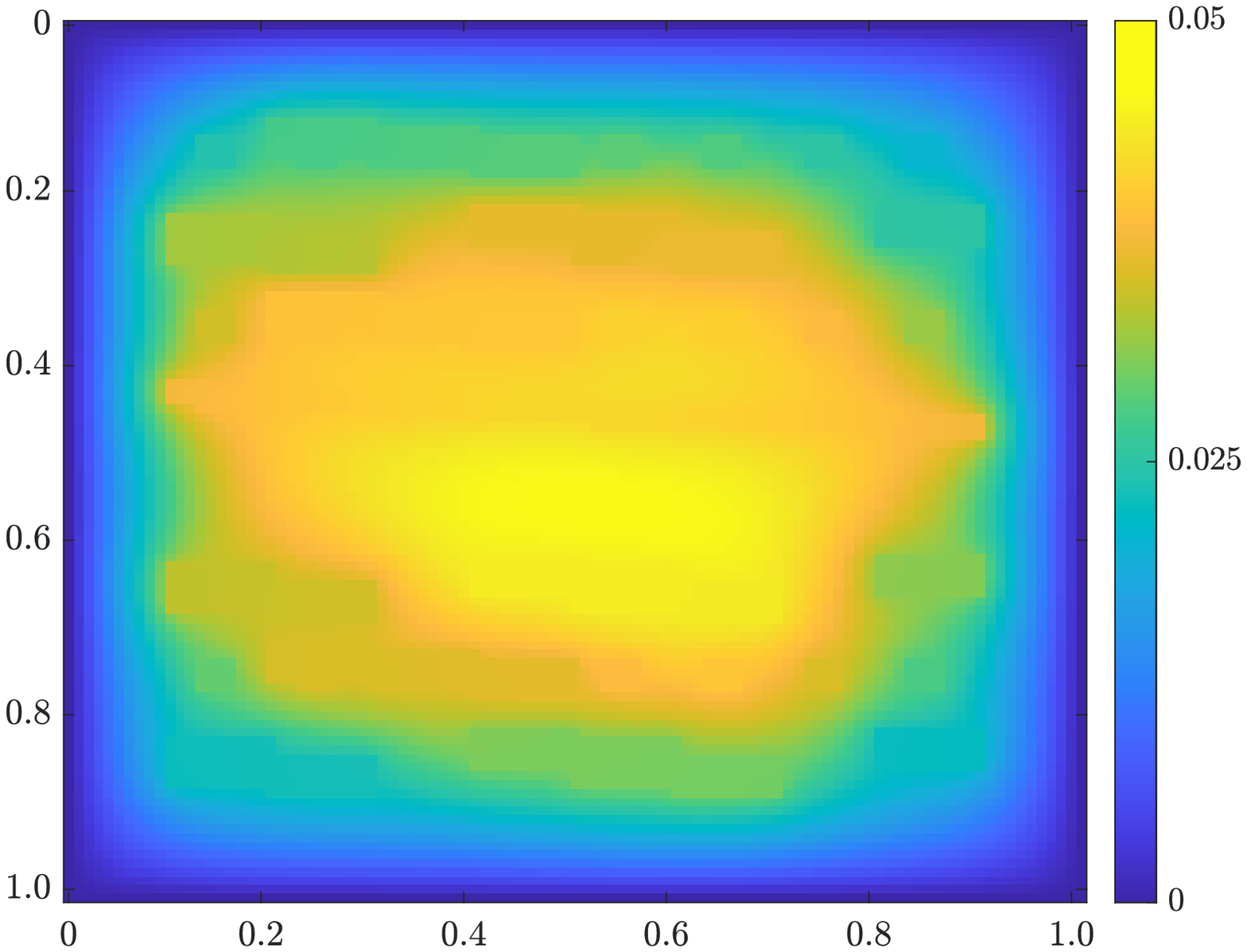}\quad
\includegraphics[width = 1.7in]{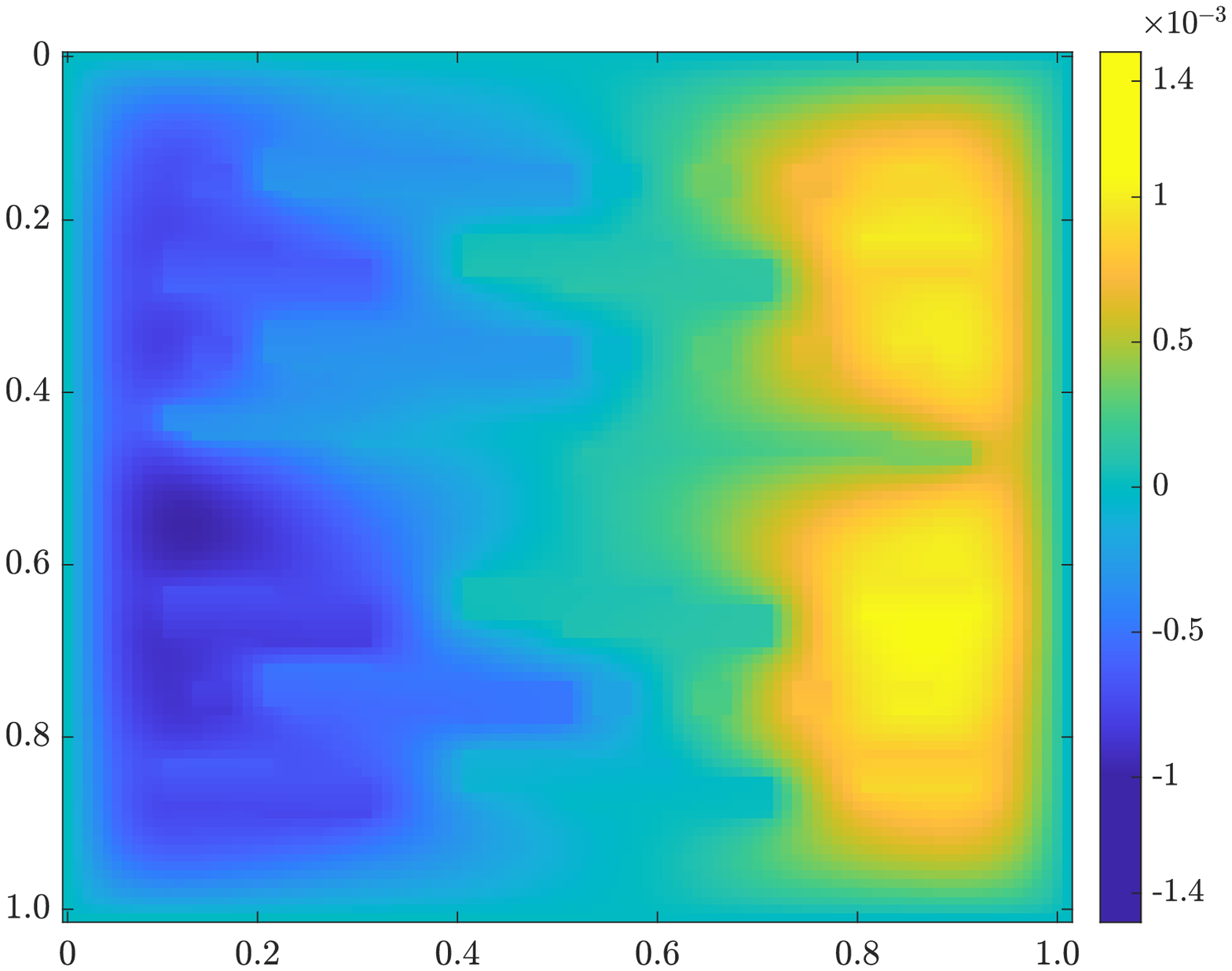}\quad
\includegraphics[width = 1.7in]{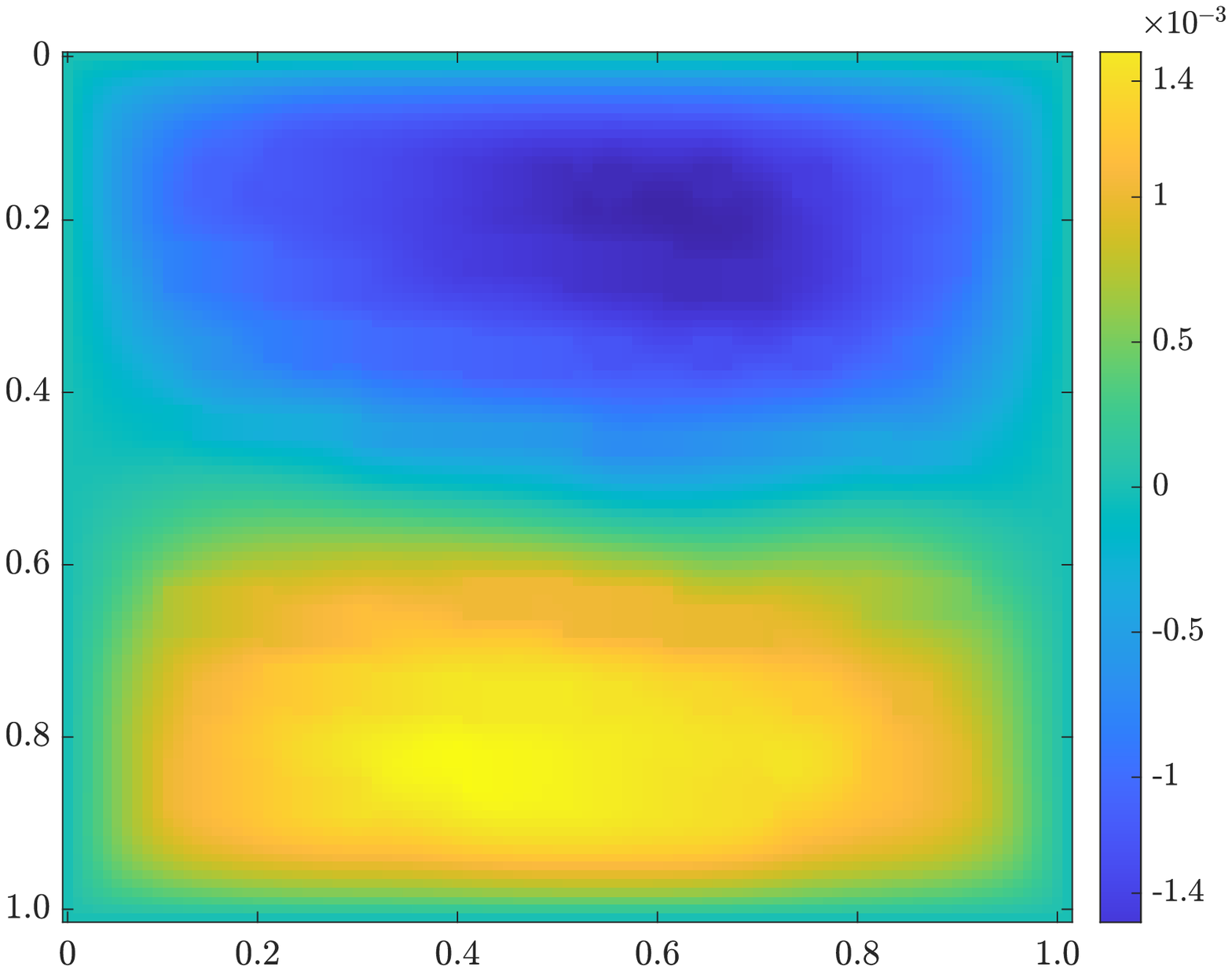}
\caption{Solution profiles for (starting from left to right) $p$, $u_1$, $u_2$ at $T=1$ of Example \ref{exp:1}. Top: reference solutions; middle: offline solutions; bottom: online solutions with iteration 3 and $\theta=\gamma=0.3$. }
\label{fig:5_1_soln_profile}
\end{figure}

\begin{table}[htbp!]
\centering
\begin{tabular}{c|c||c|c}
 $k$ & $(u_{\text{dof}}^{k}, p_{\text{dof}}^{k})$ & $e_u$ & $e_p$ \\ 
\hline
$0$ & $(200, 200)$ & $31.91\%$ & $12.45\%$ \\ 
$1$ & $(210, 206)$ & $25.05\%$ & $9.40\%$ \\ 
$2$ & $(222, 211)$ & $19.26\%$ & $6.60\%$ \\ 
$3$ & $(233, 215)$ & $14.09\%$ & $4.20\%$ \\ 
$4$ & $(244, 217)$ & $11.14\%$  & $3.52\%$ \\
$5$ & $(255, 221)$ & $8.27\%$  & $2.89\%$ \\
$6$ & $(266, 230)$ & $6.29\%$  & $2.38\%$ \\
$7$ & $(276, 238)$ & $4.74\%$  & $2.03\%$ \\
$8$ & $(286, 246)$ & $3.69\%$  & $1.74\%$ \\
$9$ & $(295, 252)$ & $3.01\%$  & $1.56\%$ \\
$10$ & $(305, 257)$ & $2.40\%$  & $1.44\%$ \\
\end{tabular}
\caption{History of online enrichment in Example \ref{exp:1} with $\theta=\gamma = 0.7$ and $K_i^+$ at time level $t=T=1$. }
\label{exp:5_1_07_K}
\end{table}

\begin{table}[htbp!]
\centering
\begin{tabular}{c|c||c|c}
$k$ & $(u_{\text{dof}}^{k}, p_{\text{dof}}^{k})$ & $e_u$ & $e_p$ \\ 
\hline
$0$ & $(200, 200)$ & $31.91\%$ & $12.45\%$ \\ 
$1$ & $(236, 219)$ & $14.66\%$ & $4.88\%$ \\ 
$2$ & $(267, 227)$ & $7.33\%$ & $2.90\%$ \\ 
$3$ & $(308, 258)$ & $3.43\%$ & $1.56\%$ \\ 
$4$ & $(357, 264)$ & $1.95\%$ & $1.47\%$ \\
$5$ & $(388, 264)$ & $1.52\%$  & $1.47\%$ \\
\end{tabular}
\caption{History of online enrichment in Example \ref{exp:1} with $\theta =\gamma= 0.3$ and $K_i^+$ at time level $t=T=1$. }
\label{exp:5_1_03_K}
\end{table}

\begin{table}[htbp!]
\centering
\begin{tabular}{c|c||c|c}
$k$ & $(u_{\text{dof}}^{k}, p_{\text{dof}}^{k})$ & $e_u$ & $e_p$ \\ 
\hline
$0$ & $(200, 200)$ & $31.91\%$ & $12.45\%$ \\ 
$1$ & $(213, 214)$ & $18.23\%$ & $6.82\%$ \\ 
$2$ & $(224, 224)$ & $11.61\%$ & $3.90\%$ \\ 
$3$ & $(233, 231)$ & $6.04\%$ & $2.22\%$ \\ 
$4$ & $(246, 245)$ & $3.55\%$ & $1.61\%$ \\
$5$ & $(258, 259)$ &  $2.29\%$ & $1.36\%$\\
$6$ & $(269, 261)$ &  $1.92\%$ & $1.34\%$\\

\end{tabular}
\caption{History of online enrichment in Example \ref{exp:1} with $\theta = \gamma=0.7$ and $\omega_i^+$ at time level $t=T=1$. }
\label{exp:5_1_07_w}
\end{table}

\begin{table}[htbp!]
\centering
\begin{tabular}{c|c||c|c}
$k$ & $(u_{\text{dof}}^{k}, p_{\text{dof}}^{k})$ & $e_u$ & $e_p$ \\ 
\hline
$0$ & $(200, 200)$ & $31.91\%$ & $12.45\%$ \\ 
$1$ & $(232, 230)$ & $6.73\%$ & $1.96\%$ \\ 
$2$ & $(266, 269)$ & $2.12\%$ & $1.27\%$ \\ 
$3$ & $(296, 302)$ & $1.47\%$ & $1.21\%$ \\ 
$4$ & $(327, 329)$ & $1.42\%$ & $1.21\%$ \\ 
$5$ & $(358, 359)$ & $1.41\%$ & $1.21\%$ \\
\end{tabular}
\caption{History of online enrichment in Example \ref{exp:1} with $\theta = \gamma=0.3$ and $\omega_i^+$ at time level $t=T=1$. }
\label{exp:5_1_03_w}
\end{table}

\begin{figure}[htbp!]
\centering
\includegraphics[width=2.5in]{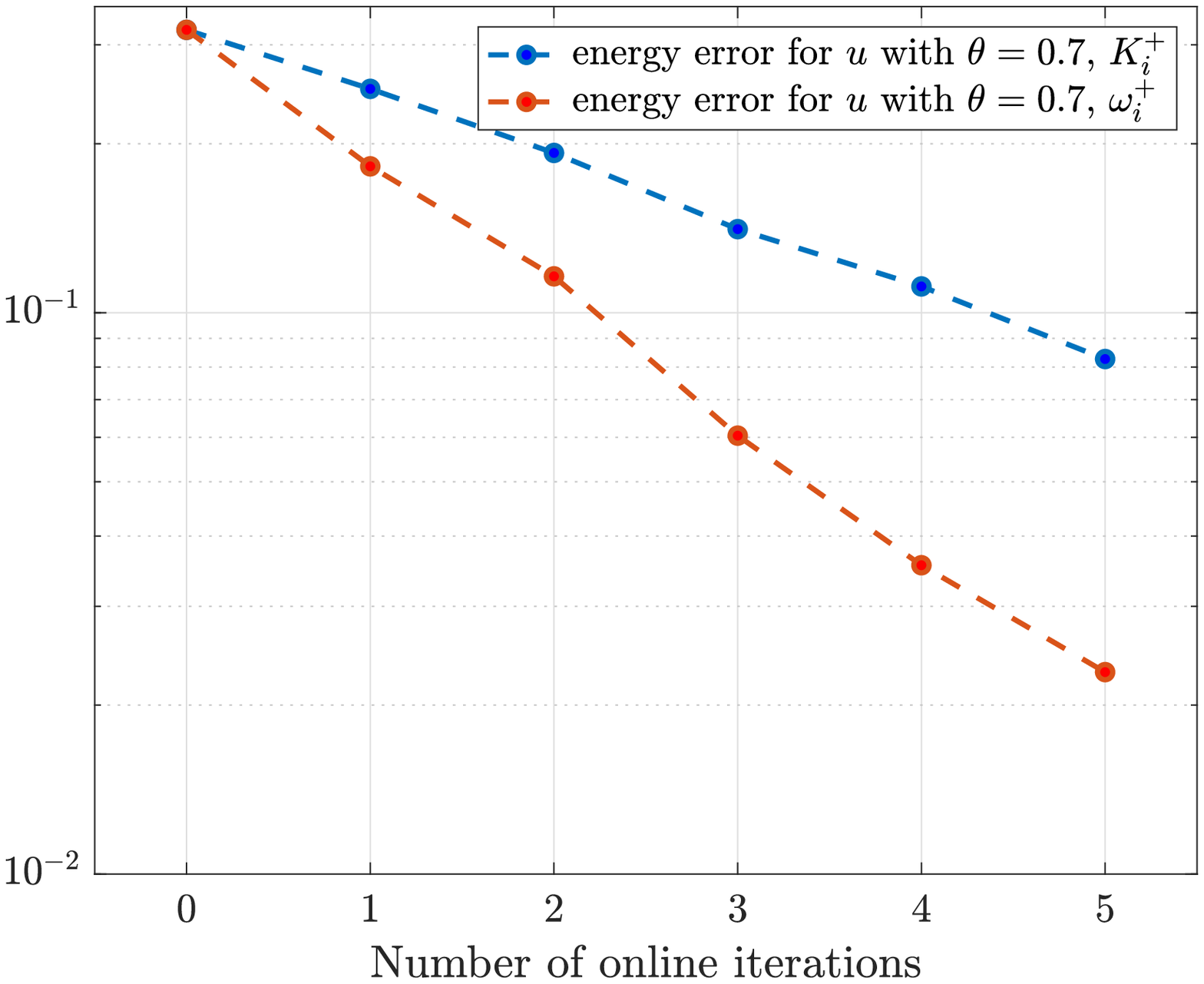} \quad
\includegraphics[width=2.5in]{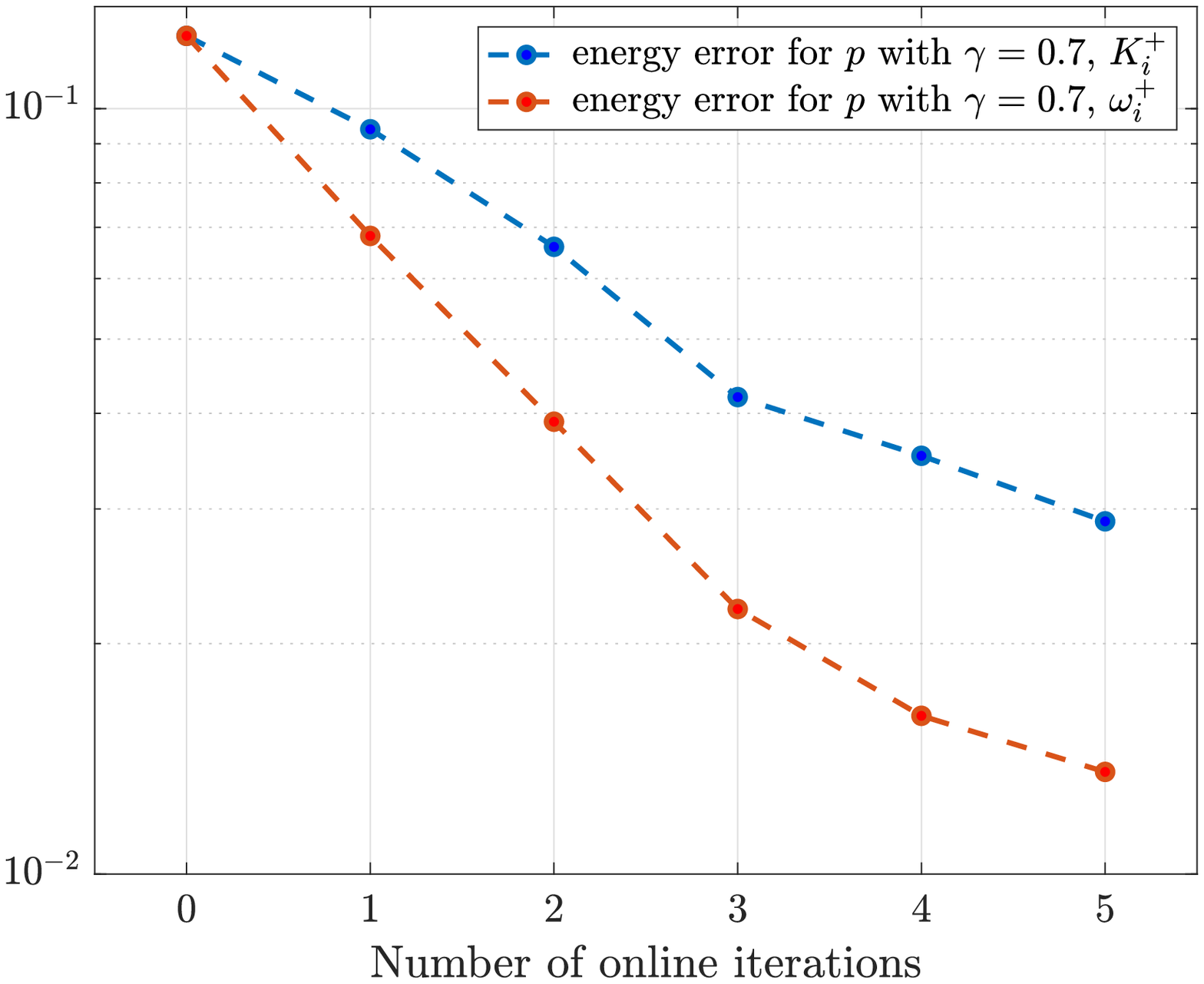}
\caption{Energy errors against the number of online iterations with $\theta=\gamma=0.7$ in Example \ref{exp:1} at time level $t=T=1$.}
\label{fig:5_1_07}
\end{figure}

\begin{figure}[htbp!]
\centering
\includegraphics[width=2.5in]{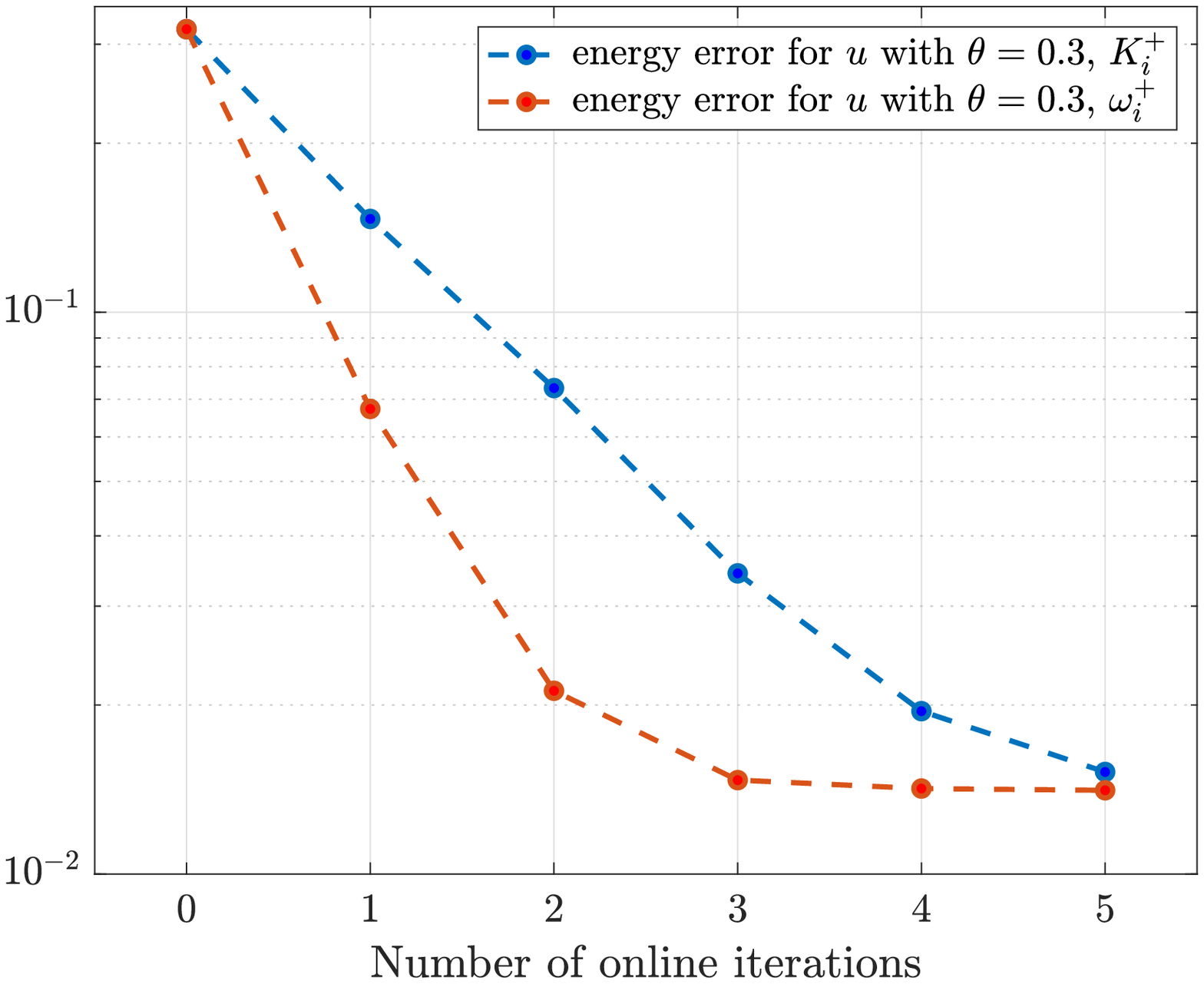} \quad
\includegraphics[width=2.5in]{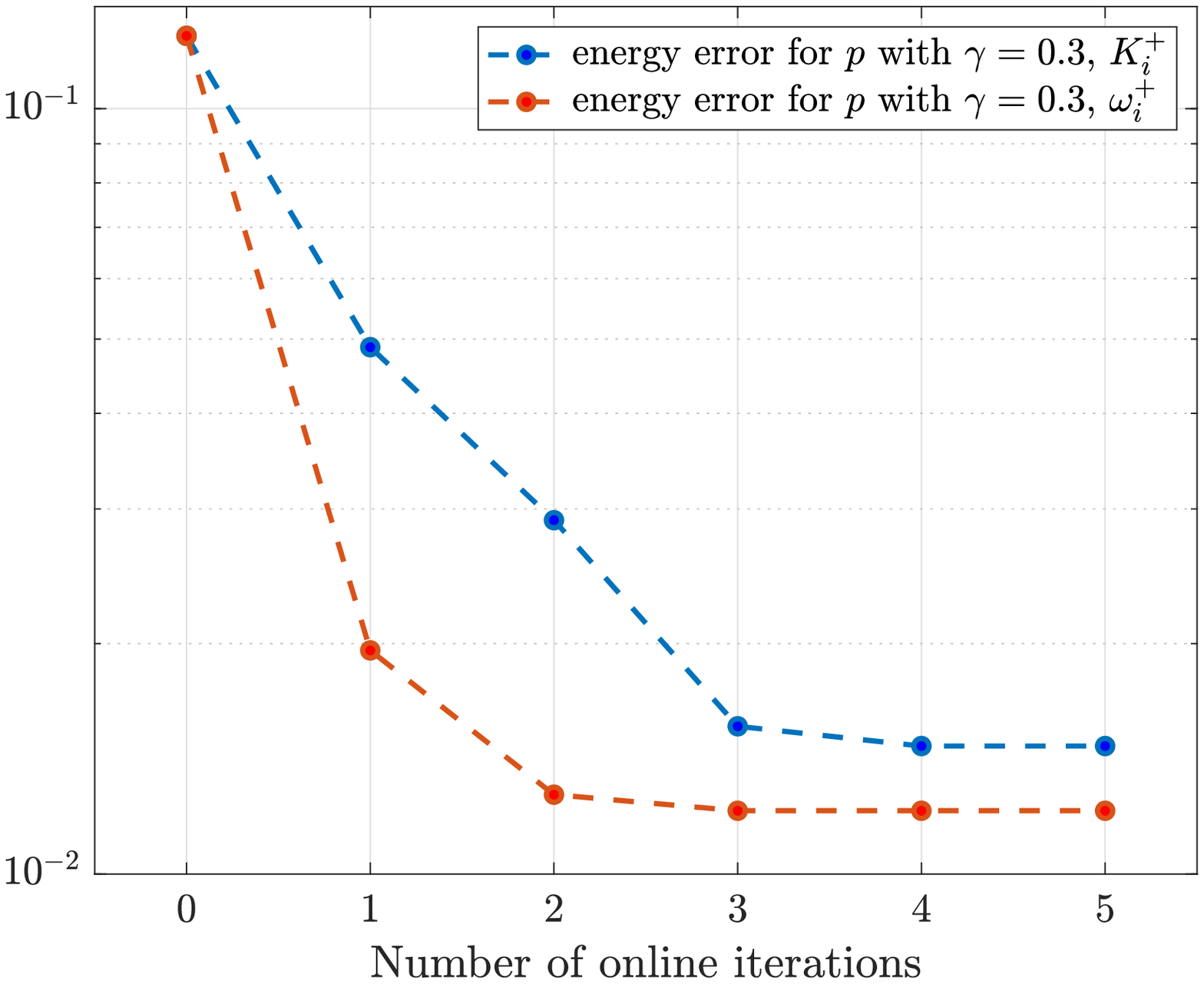}
\caption{Energy errors against the number of online iterations with $\theta=\gamma=0.3$ in Example \ref{exp:1} at time level $t=T=1$.}
\label{fig:5_1_03}
\end{figure}

\begin{example}\label{exp:3}
\revi{We test the online adaptive method on a case with larger Poisson ratio close to $0.5$. 
In particular, we set $\nu_p = 0.49$, $\theta=\gamma=30\%$, and the rest of the setting is the same as that of Example \ref{exp:1}. 
We use the neighborhood-based online strategy for the enrichment of online basis functions. }
\end{example}

\begin{figure}[htbp!]
\centering
\includegraphics[width = 1.7in]{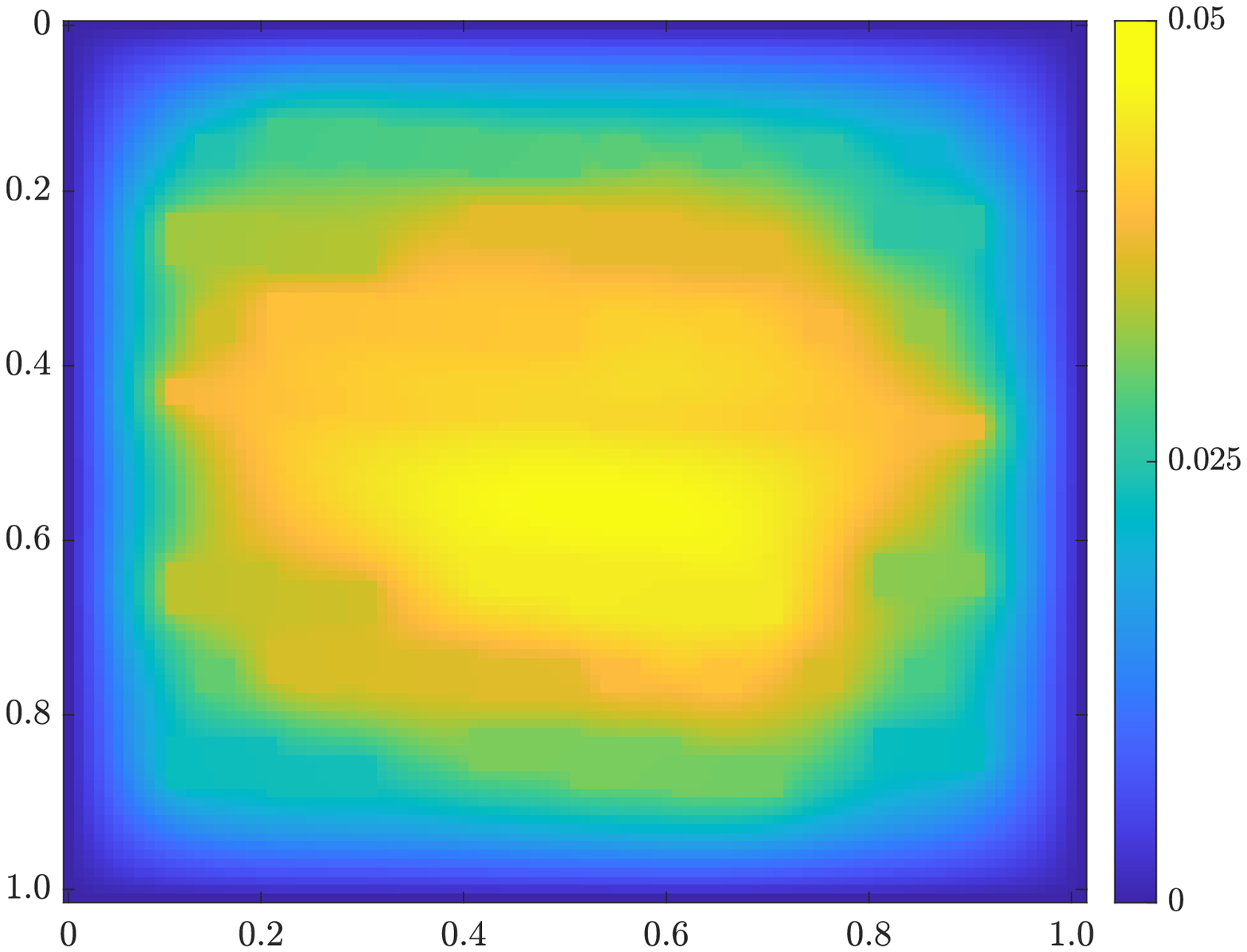} \quad
\includegraphics[width = 1.7in]{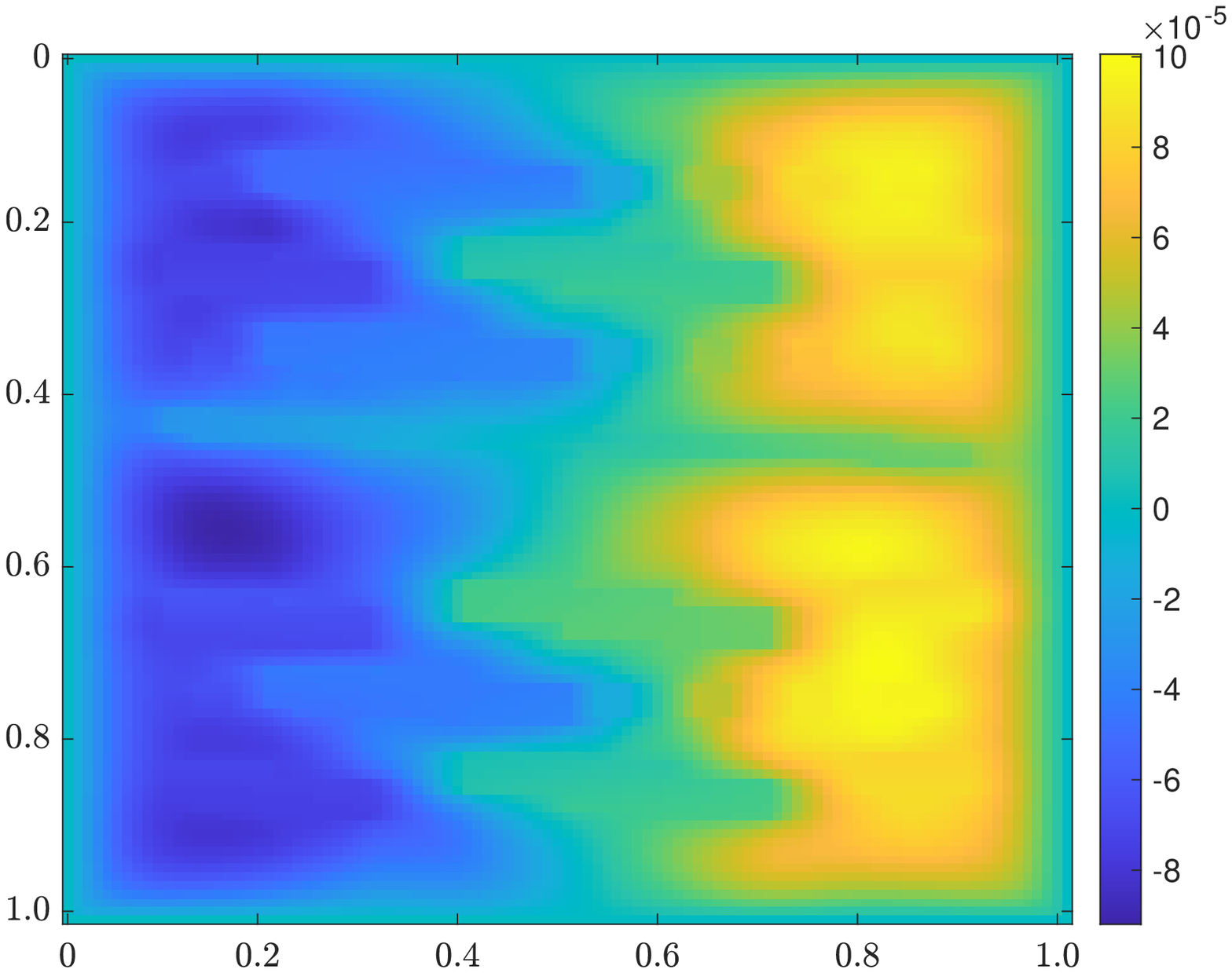}\quad
\includegraphics[width = 1.7in]{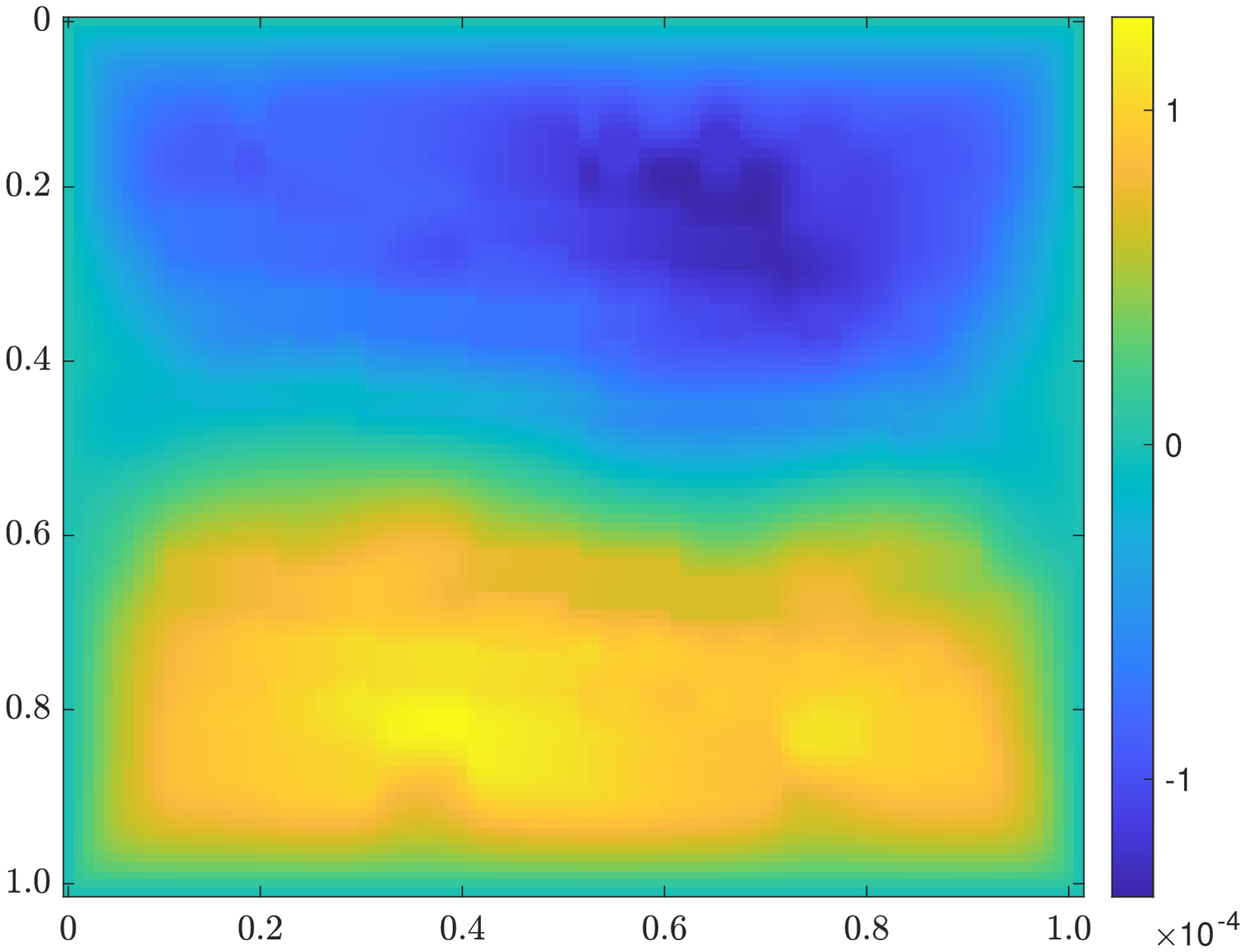}\\
\includegraphics[width = 1.7in]{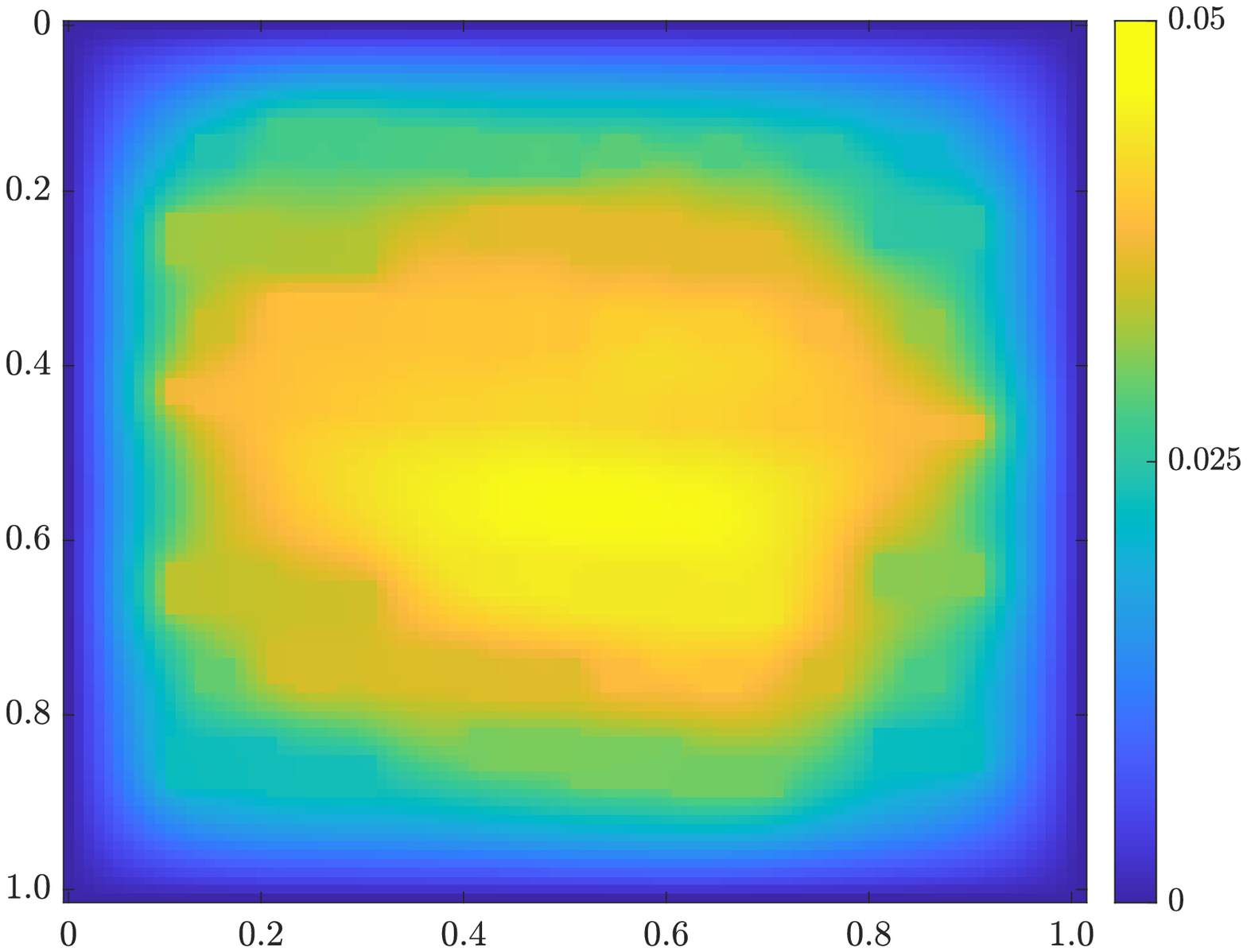}\quad
\includegraphics[width = 1.7in]{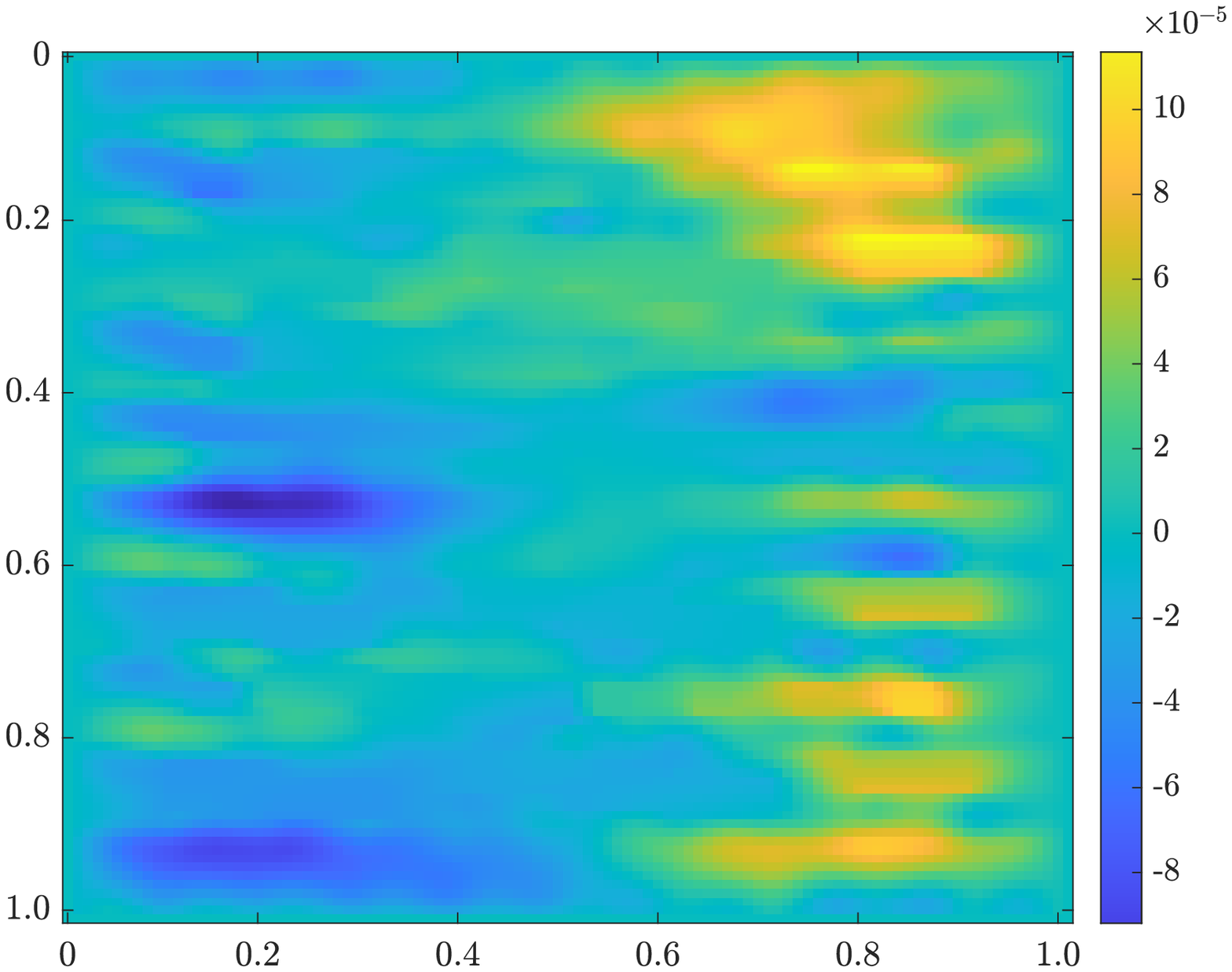}\quad
\includegraphics[width = 1.7in]{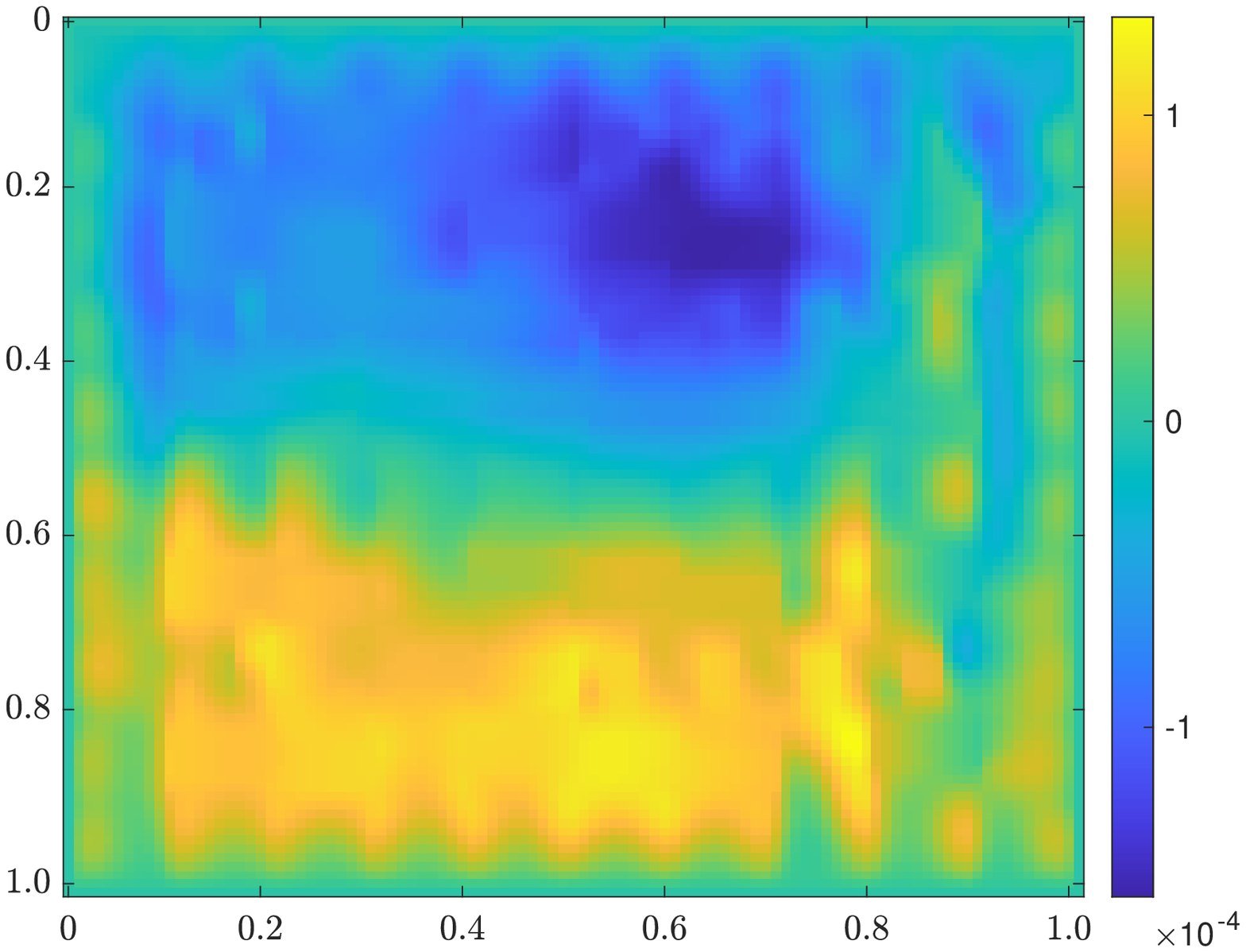}\\
\includegraphics[width = 1.7in]{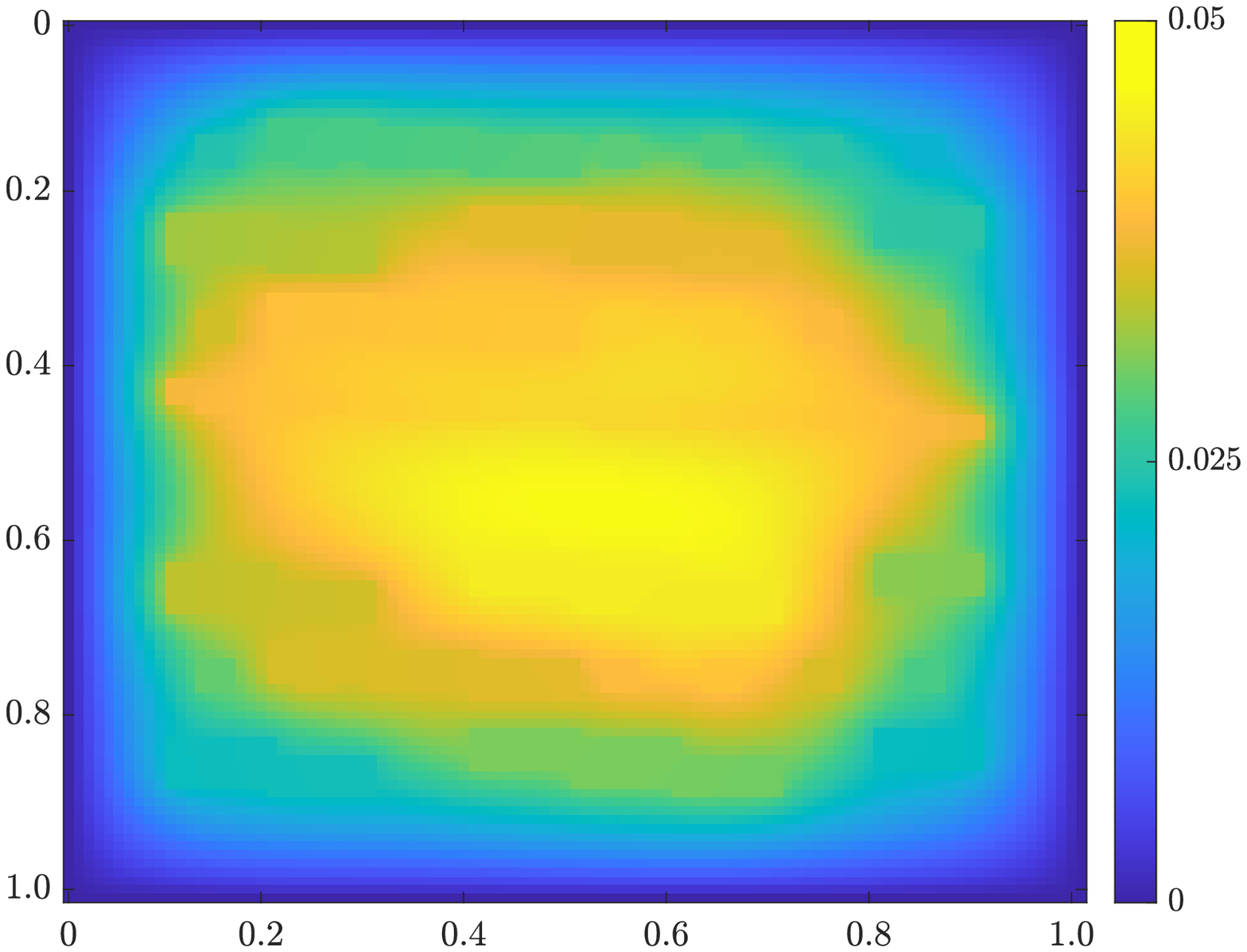}\quad
\includegraphics[width = 1.7in]{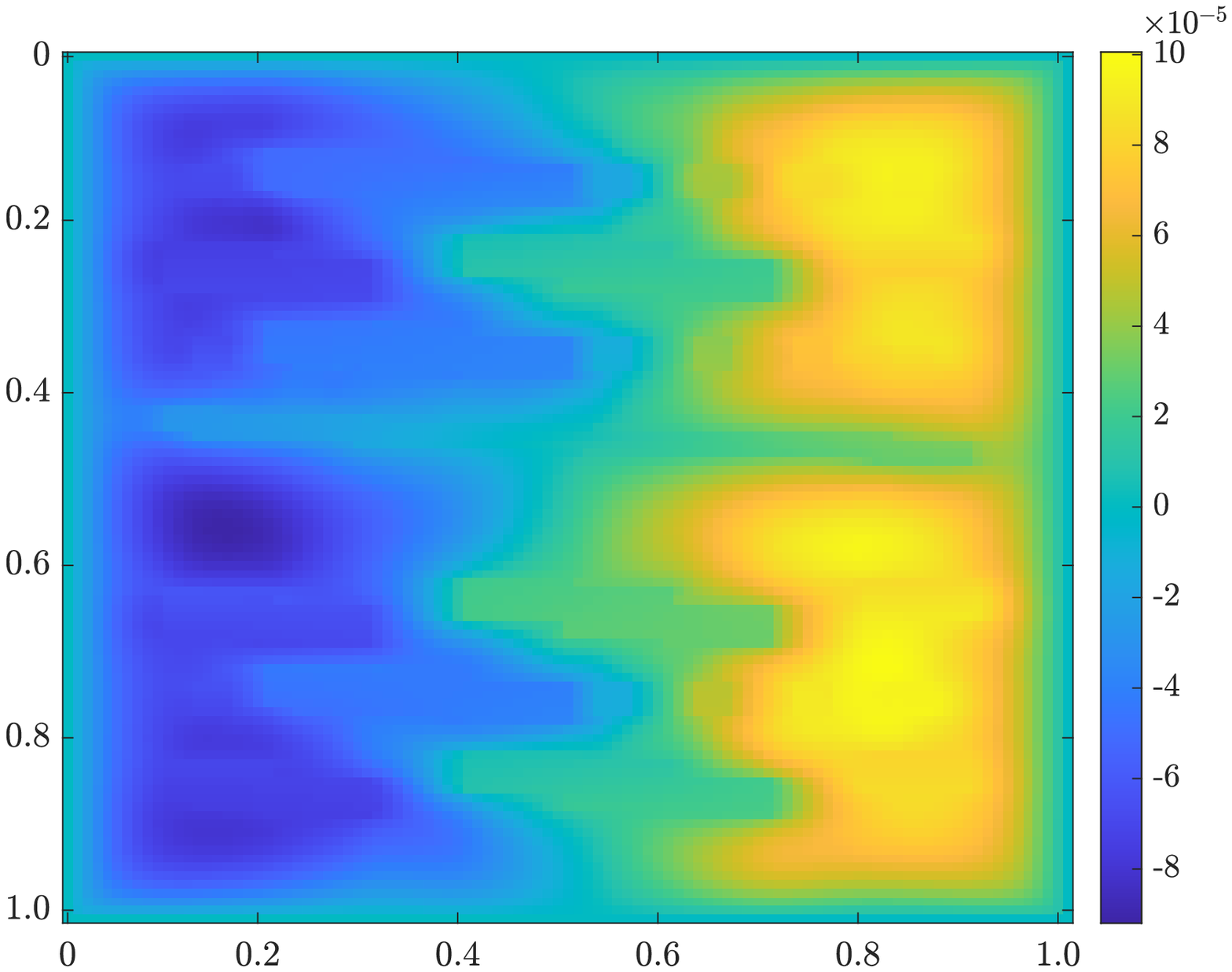}\quad
\includegraphics[width = 1.7in]{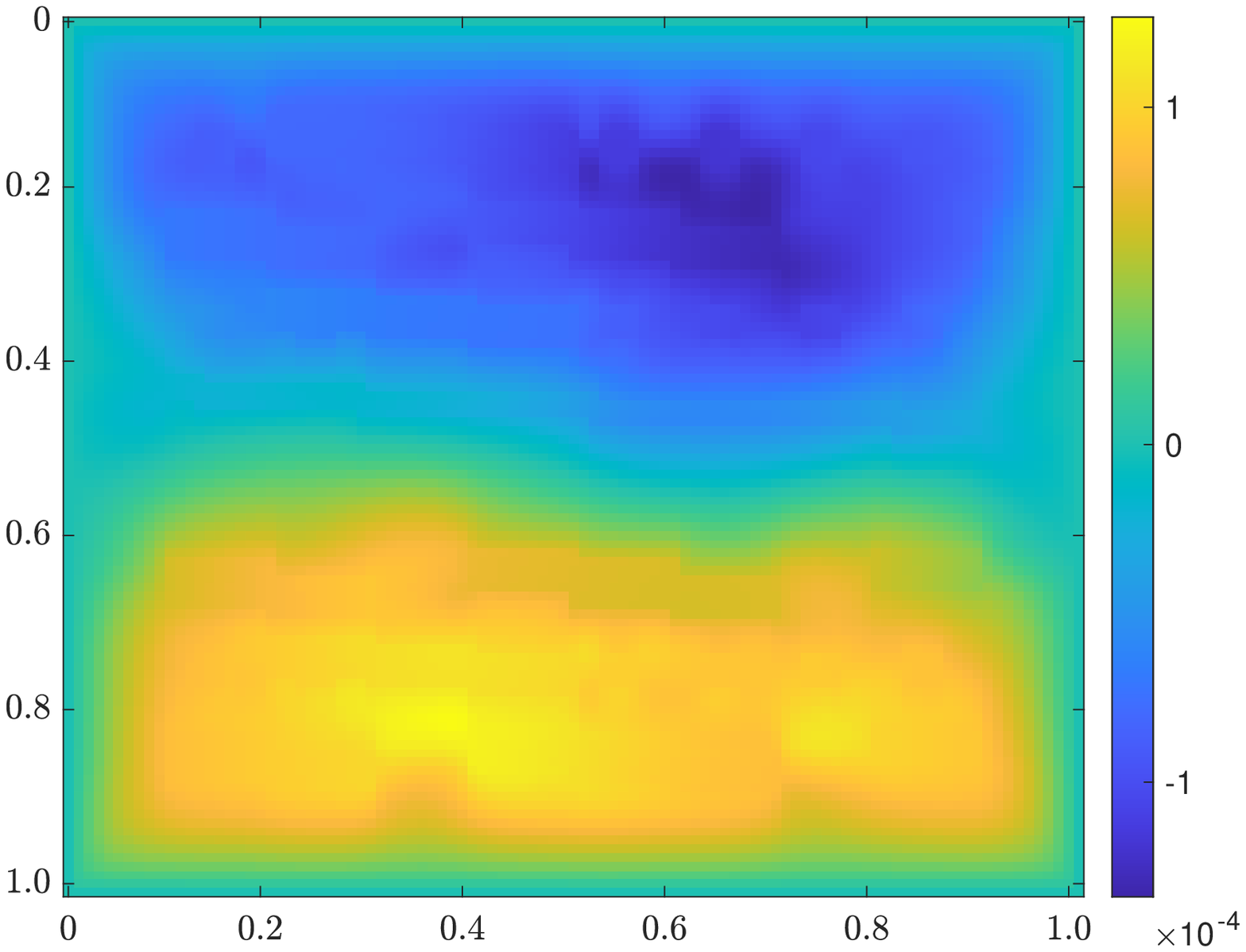}
\caption{Solution profiles (starting from left to right) $p$, $u_1$, $u_2$ of Example \ref{exp:3}. Top: reference solutions; middle: offline solutions; bottom: online solutions with iteration 2 at  time level $t=T=1$.  }
\end{figure}

\begin{table}[htbp!]
\begin{center}
\begin{tabular}{c|c||c|c}
$k$ & $(u_{\text{dof}}^k ,p_{\text{dof}}^k )$& $e_u$ & $e_p$ \\ 
\hline
$0$ & $(200, 200)$ & $57.38\%$ & $12.45\%$ \\ 
$1$ & $(238, 230)$ & $12.72\%$ & $1.75\%$ \\ 
$2$ & $(275, 269)$ & $4.39\%$ & $0.88\%$ \\ 
$3$ & $(316, 301)$ & $1.74\%$ & $0.79\%$ \\ 
$4$ & $(354, 328)$ & $1.04\%$ & $0.78\%$ \\ 
$5$ & $(394, 359)$ & $0.91\%$ & $0.78\%$ \\ 
\end{tabular}
\caption{History of online enrichment in Example \ref{exp:3} at  time level $t=T=1$. }
\end{center}
\end{table}

\begin{figure}[htbp!]
\centering
\includegraphics[width = 2.5in]{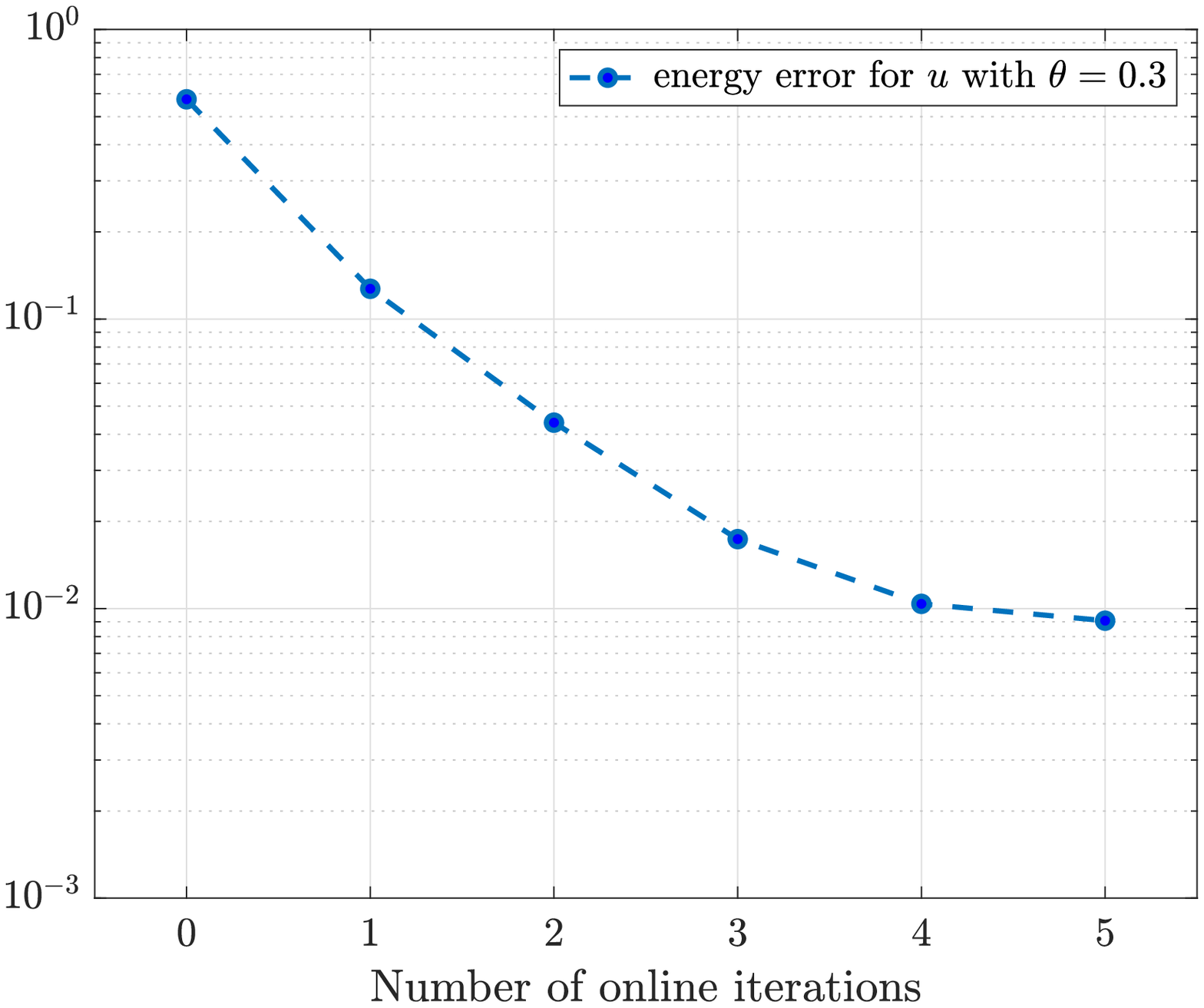}\quad
\includegraphics[width = 2.5in]{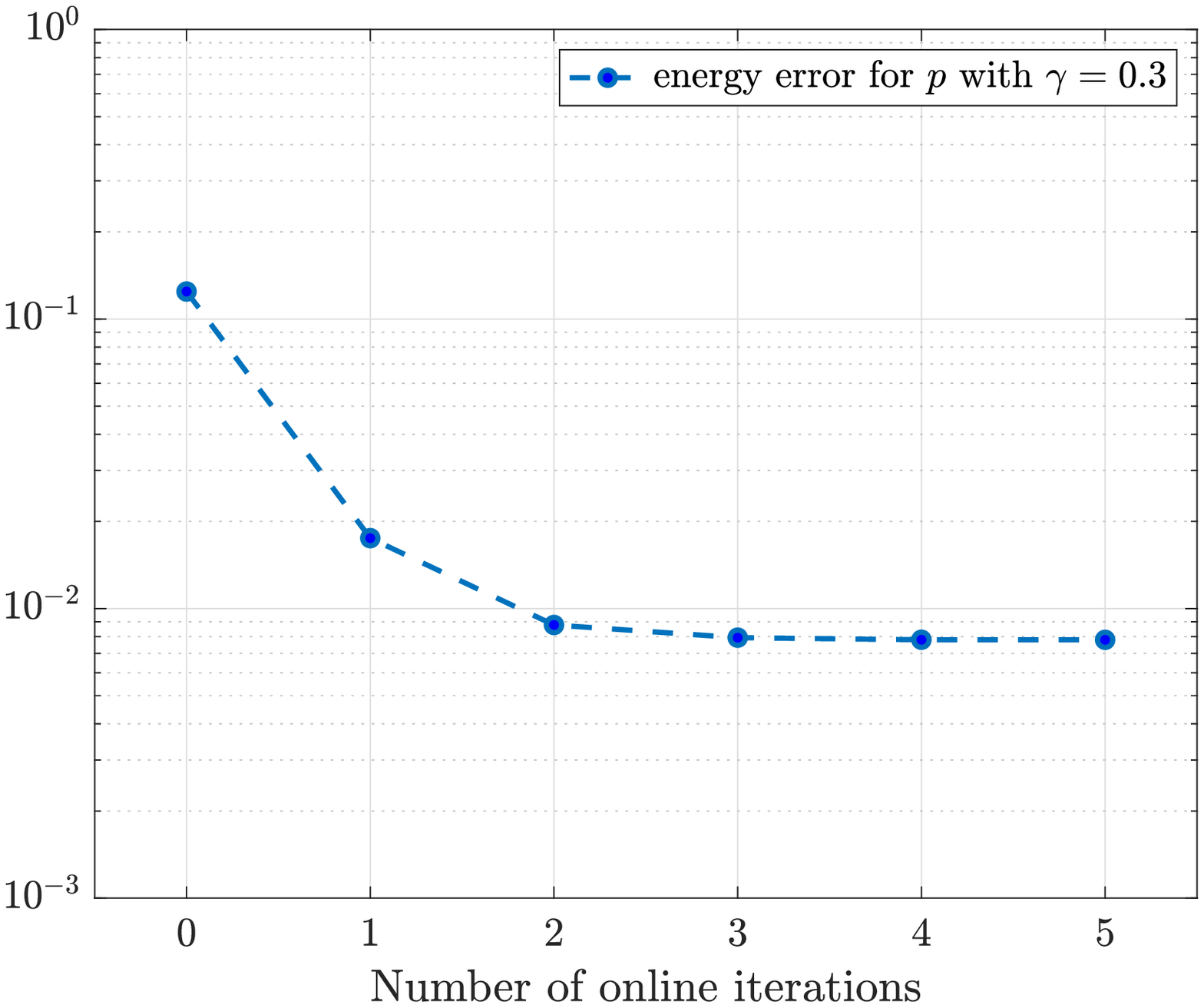}
\caption{Energy errors against the number of online iterations in Example \ref{exp:3} at time level $t=T=1$.}
\end{figure}

\noindent
\revi{We notice that if we keep using the offline degrees of freedom $(u_{\text{dof}}^{k}, p_{\text{dof}}^{k})=(200,200)$ during the whole time propagation for the case with $\nu_p = 0.49$, the energy errors of the offline solutions are significantly large, especially for the error of displacement. 
The result shows that the online enrichment algorithm is necessary to improve the accuracy of multiscale approximation. 
With only five online enrichments, the energy errors drops drastically to below $1\%$ while degrees of freedom $(u_{\text{dof}}^{k}, p_{\text{dof}}^{k})=(394,359)$ are used. Compared to the online enrichment method, one can also achieve the level of accuracy $e_u=5.30\%$ and $e_p=6.83\%$ when the offline degrees of freedom are $(u_{\text{dof}}^{k}, p_{\text{dof}}^{k})=(2000,2000)$. 
These results show that the online enrichment algorithm can greatly improve the accuracy of multiscale approximation. }

In the next two examples, we perform online enrichment every several time steps and they yields high-fidelity simulations.

\begin{example}\label{exp:2}
The problem setting in this example reads as follows: 
\begin{enumerate}
\item We set $\Omega = (0,1)^2$, $\alpha = 0.9$, $M = 1$, $\nu_p = 0.2$, and $\nu = 1$. 
\item The terminal time is $T = 1$ and the time step is $\tau = 0.02$ (with $N = 20$). 
\item The Young's modulus $E$ is depicted in Figure \ref{fig:ym_exp1} (right). We set the permeability to be $\kappa = E$. 
\item We set $f (x_1,x_2) =2\pi^2 \sin(\pi x_1) \sin(\pi x_2)$; the initial condition is $p(x_1, x_2) = 100x_1^2(1-x_1)x_2^2(1-x_2)$. 
\item We set $J = J_i^1 = J_i^2 = 2$; $\ell=2$ for the offline stage and $\ell = 3$ for the online stage. 
\item Every five steps in time we perform an online (neighborhood-based) enrichment with online tolerances to be $\theta =\gamma= 30\%$ and $\theta = \gamma  = 70\%$. 
\item The coarse mesh is $20 \times 20$ and the overall fine mesh is $200 \times 200$. 
\end{enumerate}
\end{example}

\begin{figure}[htbp!]
\centering
\includegraphics[width = 1.7in]{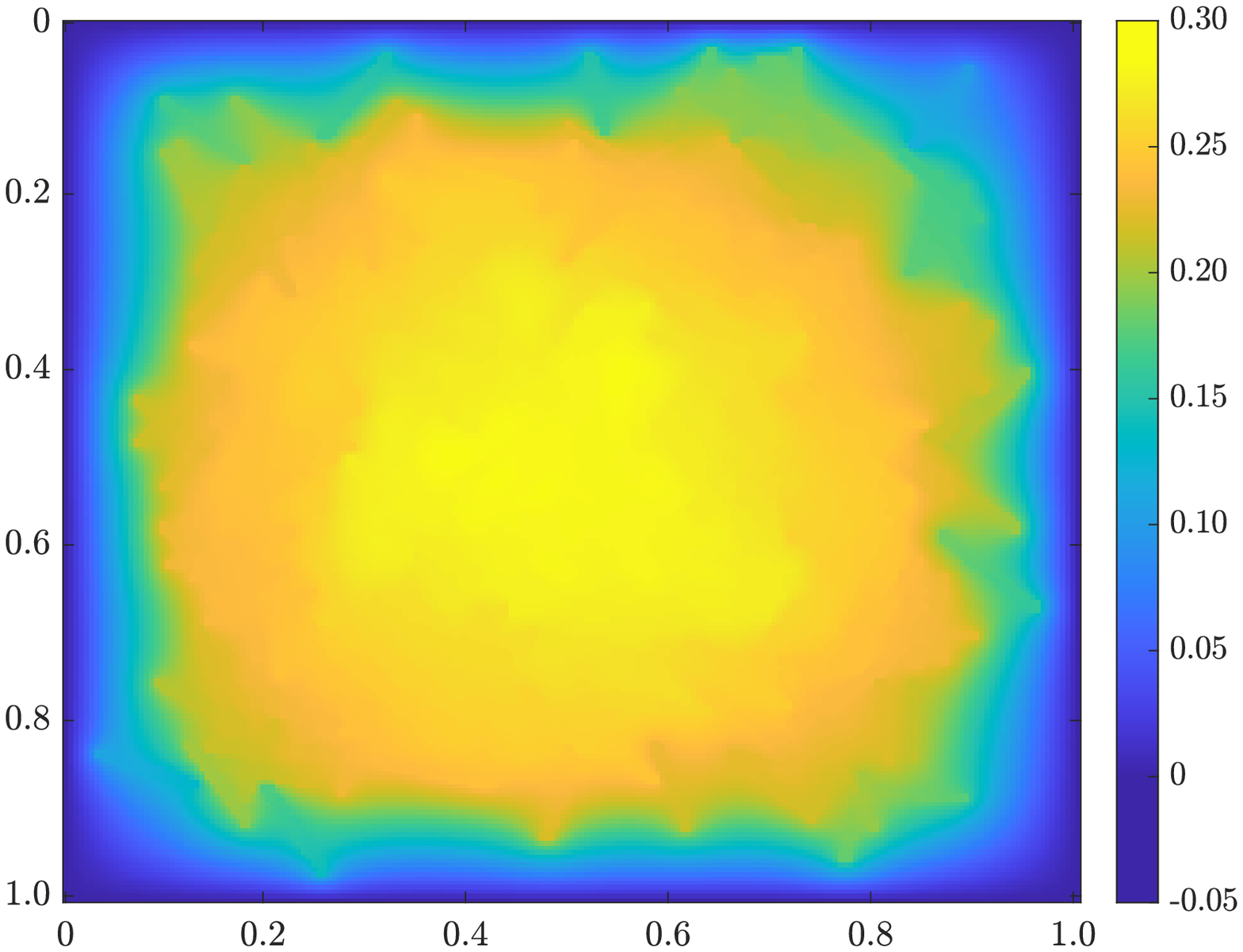} \quad
\includegraphics[width = 1.7in]{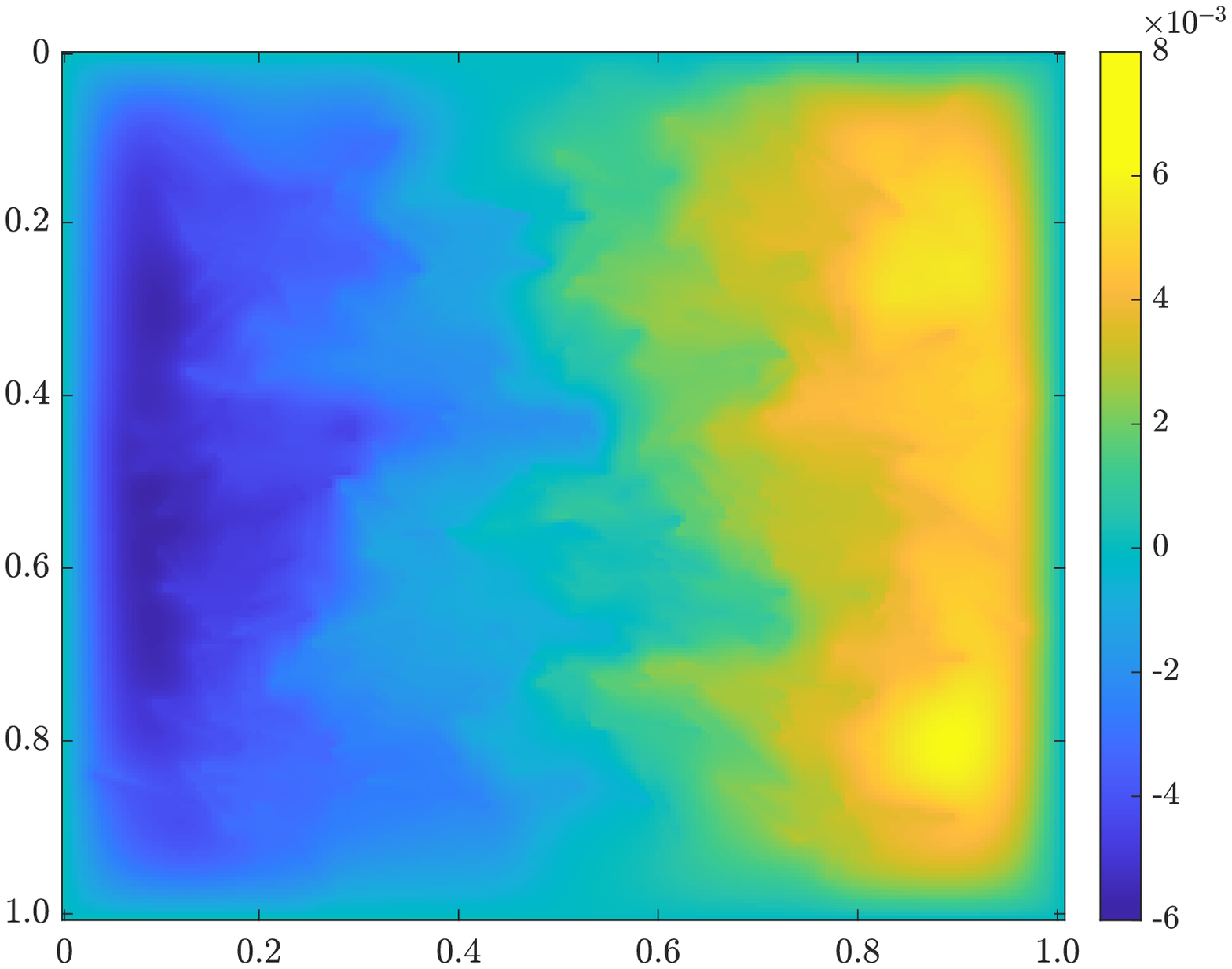}\quad
\includegraphics[width = 1.7in]{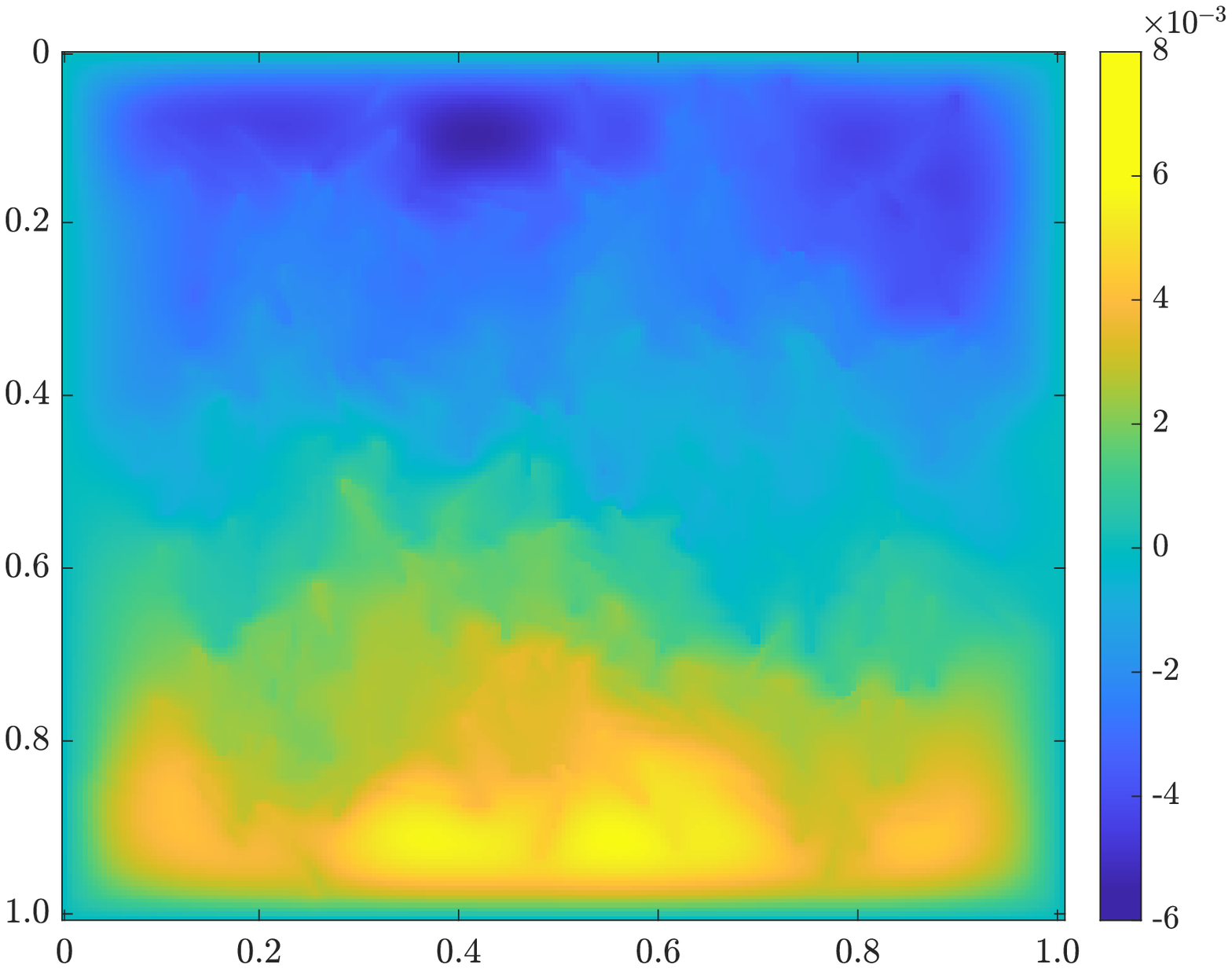}
\includegraphics[width = 1.7in]{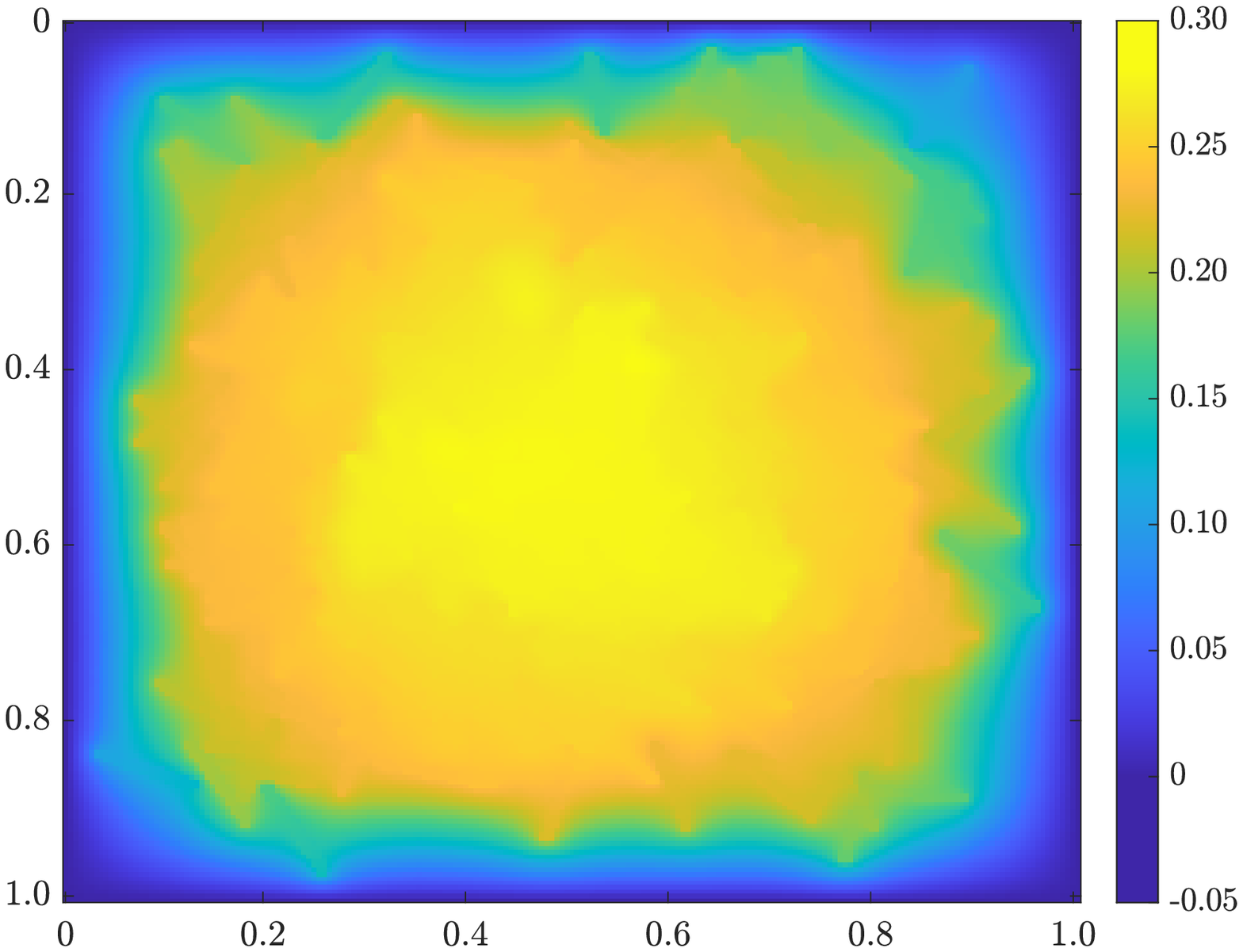}\quad
\includegraphics[width = 1.7in]{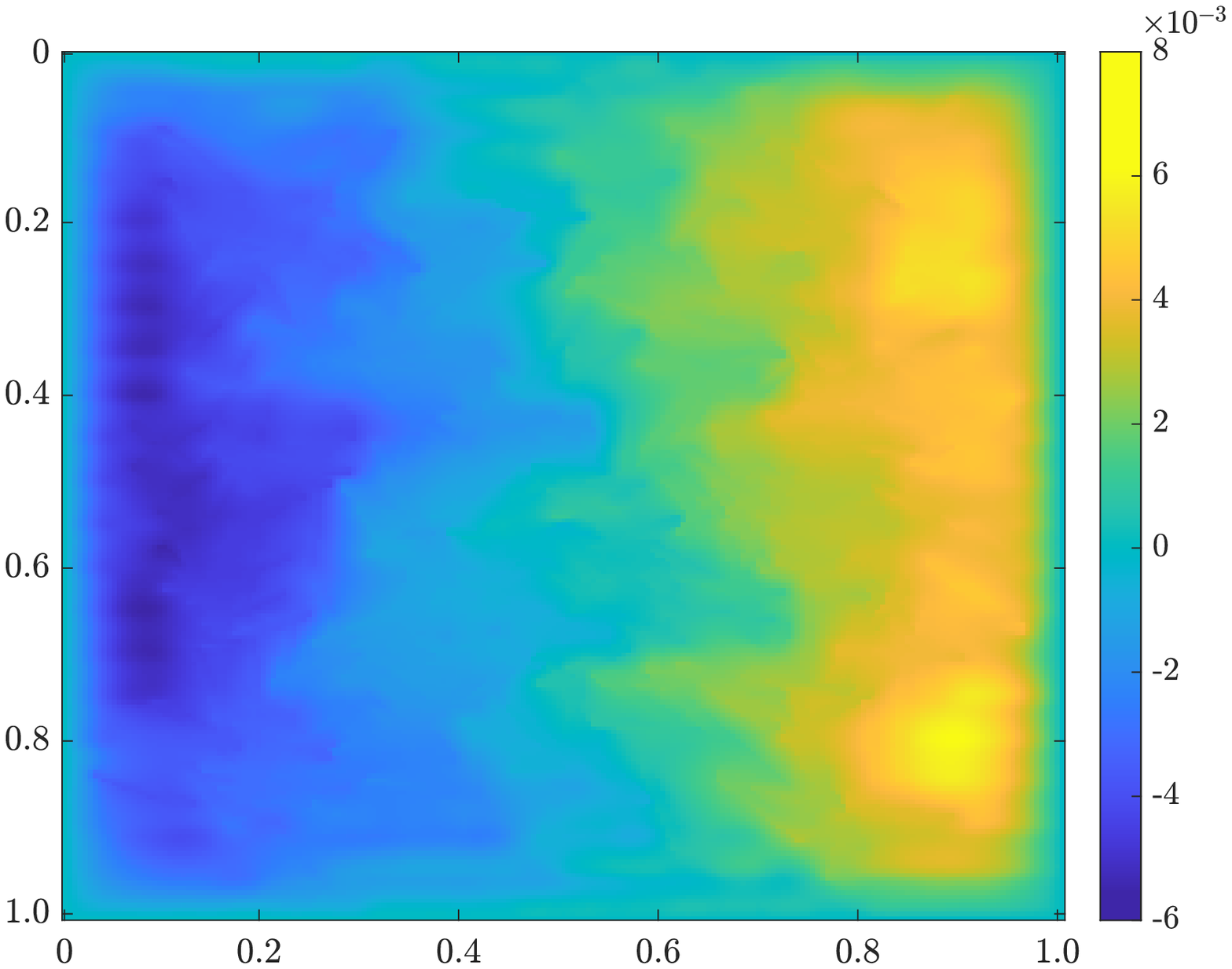}\quad
\includegraphics[width = 1.7in]{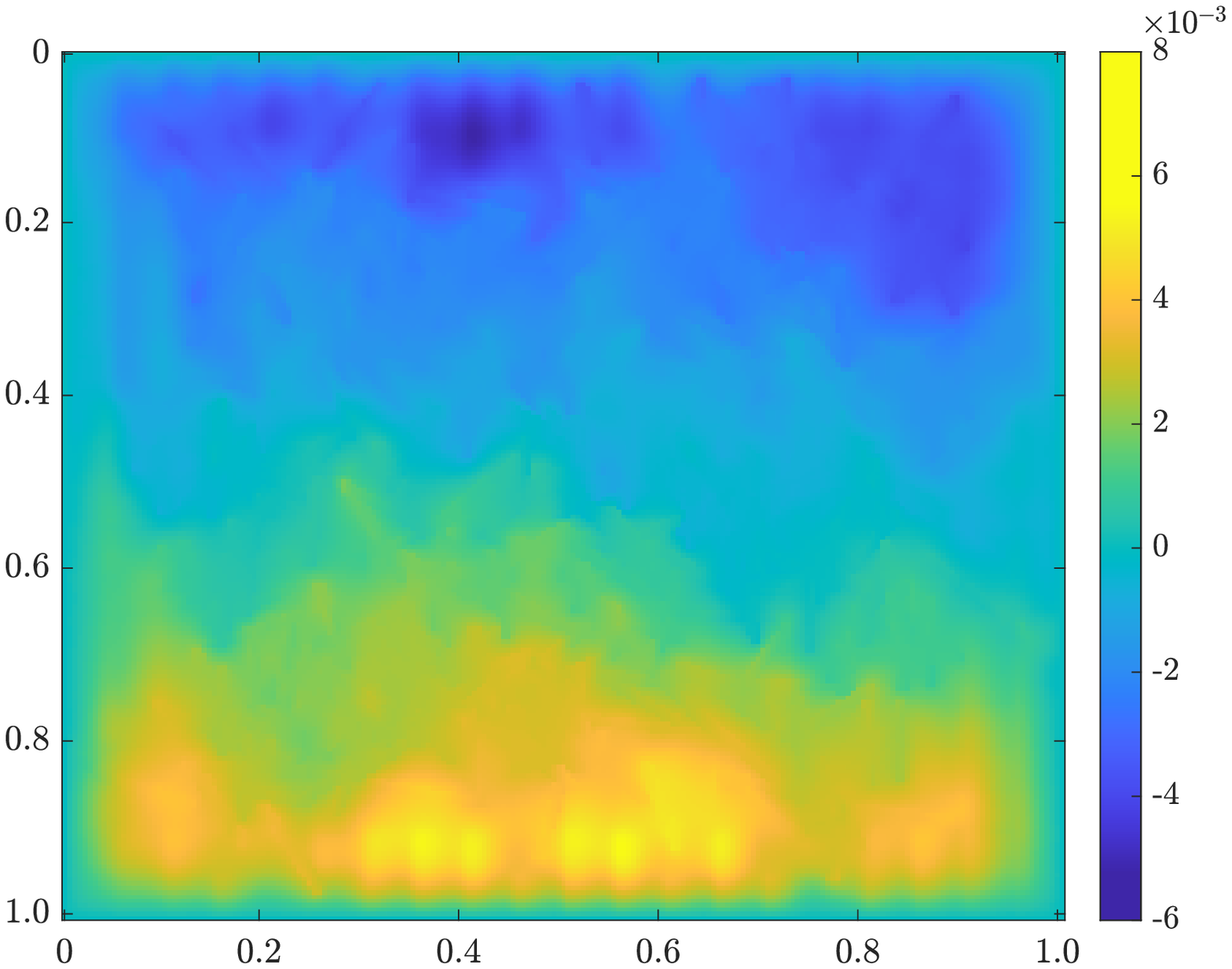}
\includegraphics[width = 1.7in]{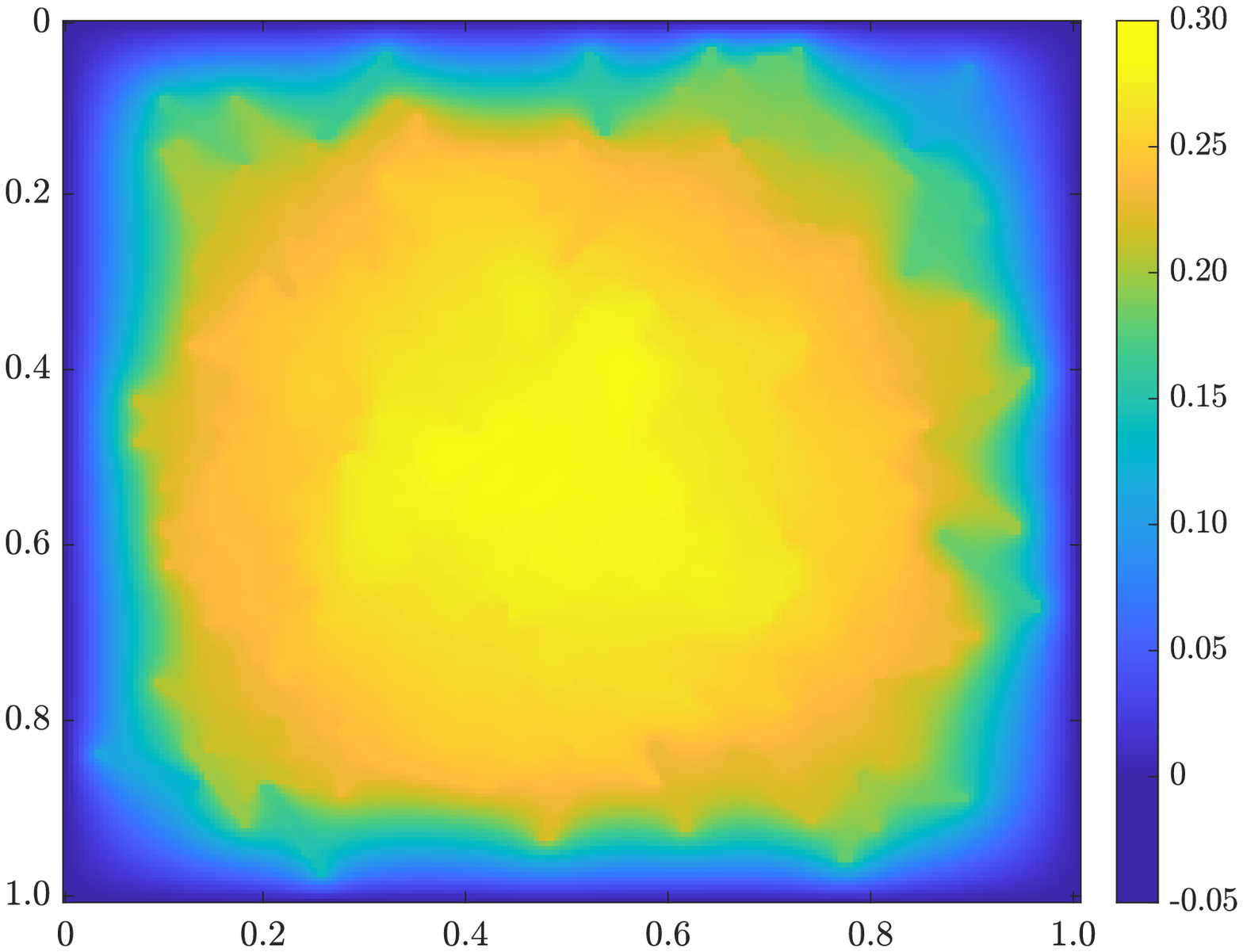}\quad
\includegraphics[width = 1.7in]{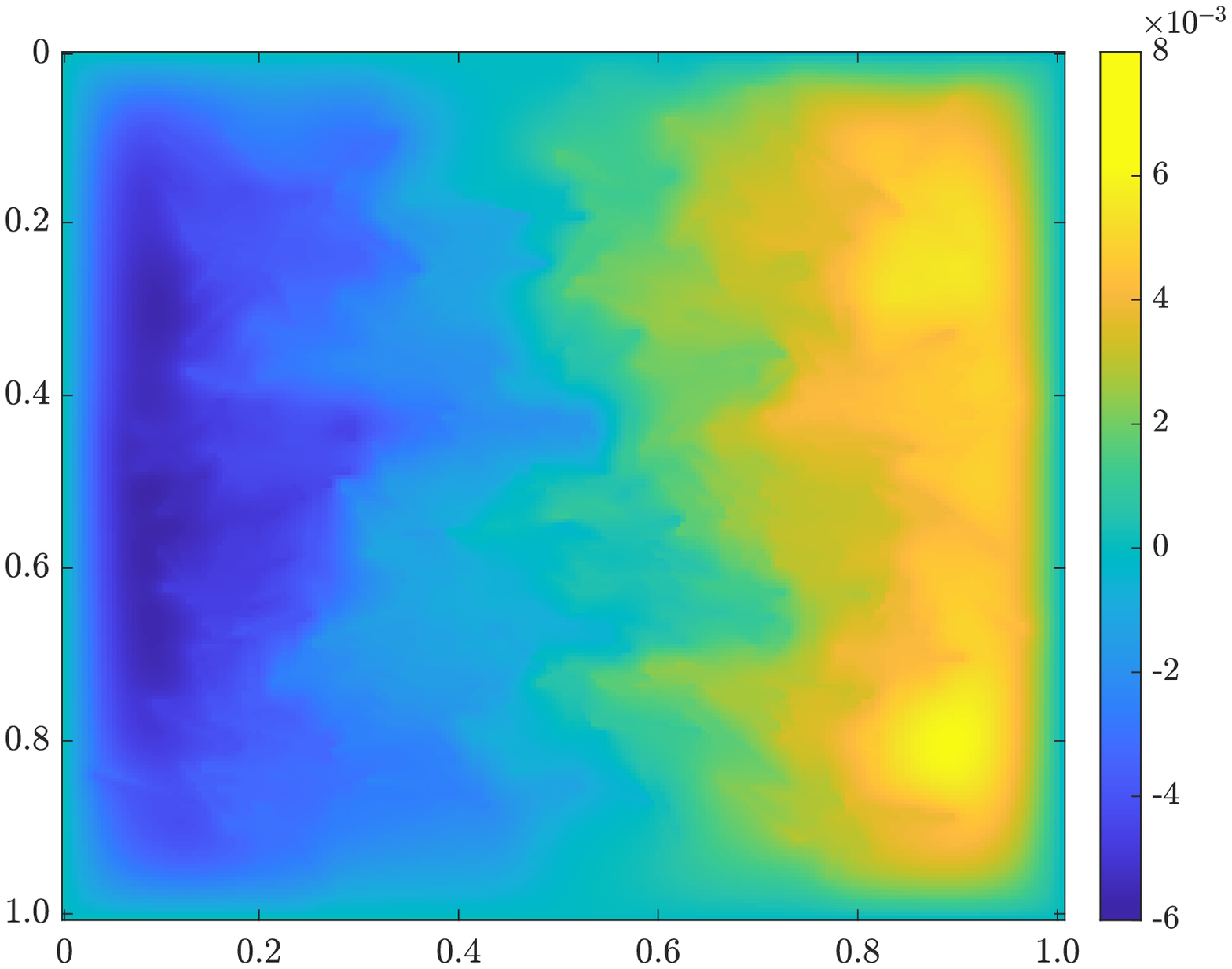}\quad
\includegraphics[width = 1.7in]{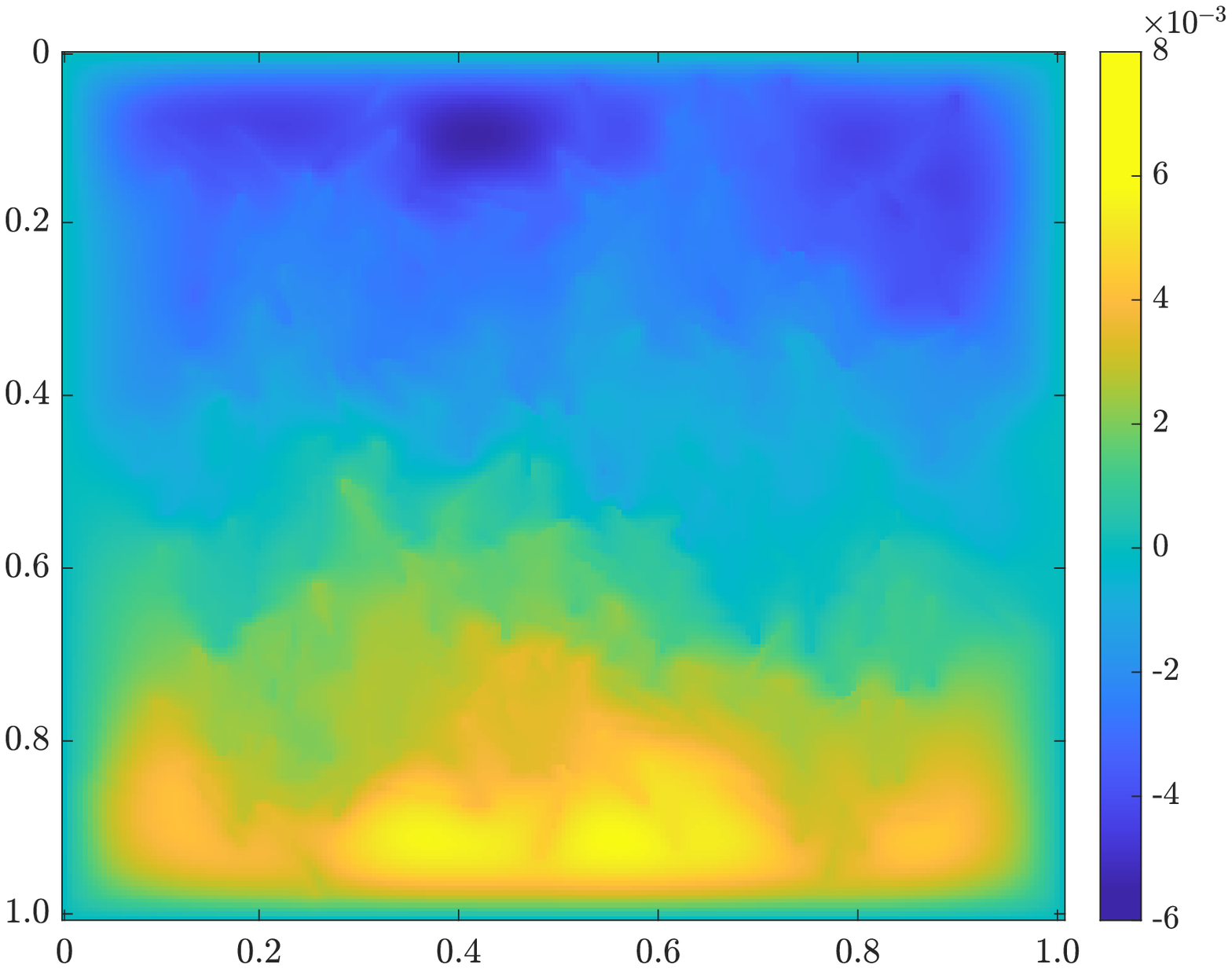}
\caption{Solution profiles for (starting from left to right) $p$, $u_1$, $u_2$ at $t=0.52$ of Example \ref{exp:2}. Top: reference solutions; middle: offline solutions; bottom: online solutions with iteration 3 with $\theta=\gamma=0.3$ at  time level $t=0.52$. }
\label{fig:5_2_soln_profile}
\end{figure}

\begin{table}[htbp!]
\centering
\begin{tabular}{c|c||c|c}
$k$ & $(u_{\text{dof}}^{k}, p_{\text{dof}}^{k})$& $e_u$ & $e_p$ \\ 
\hline
$0$ & $(800, 800)$ & $30.83\%$ & $8.95\%$ \\ 
$1$ & $(1115, 1144)$ & $1.46\%$ & $0.62\%$ \\ 
$2$ & $(1408, 1411)$ & $0.11\%$ & $0.25\%$ \\ 
$3$ & $(1664, 1511)$ & $0.03\%$ & $0.23\%$ \\ 
\end{tabular}
\caption{History of online enrichment in Example \ref{exp:2} with $\theta=\gamma=0.7$ and $\omega_i^+$ at  time level $t=0.52$. }
\end{table}

\begin{table}[htbp!]
\centering
\begin{tabular}{c|c||c|c}
$k$ & $(u_{\text{dof}}^{k}, p_{\text{dof}}^{k})$& $e_u$ & $e_p$ \\ 
\hline
$0$ & $(800, 800)$ & $30.83\%$ & $8.95\%$ \\ 
$1$ & $(1674, 1661)$ & $0.0178\%$ & $0.0115\%$ \\ 
$2$ & $(2581, 2006)$& $7.32\times 10^{-5}\%$ & $8.66\times 10^{-4}\%$ \\ 
$3$ & $(2716, 2128)$ & $6.99\times 10^{-5}\%$ & $7.73\times 10^{-4}\%$ \\ 
\end{tabular}
\caption{History of online enrichment in Example \ref{exp:2} with $\theta=\gamma=0.3$ and $\omega_i^+$ at  time level $t=0.52$. }
\end{table}

\begin{figure}[H]
\centering
\includegraphics[width=2.5in]{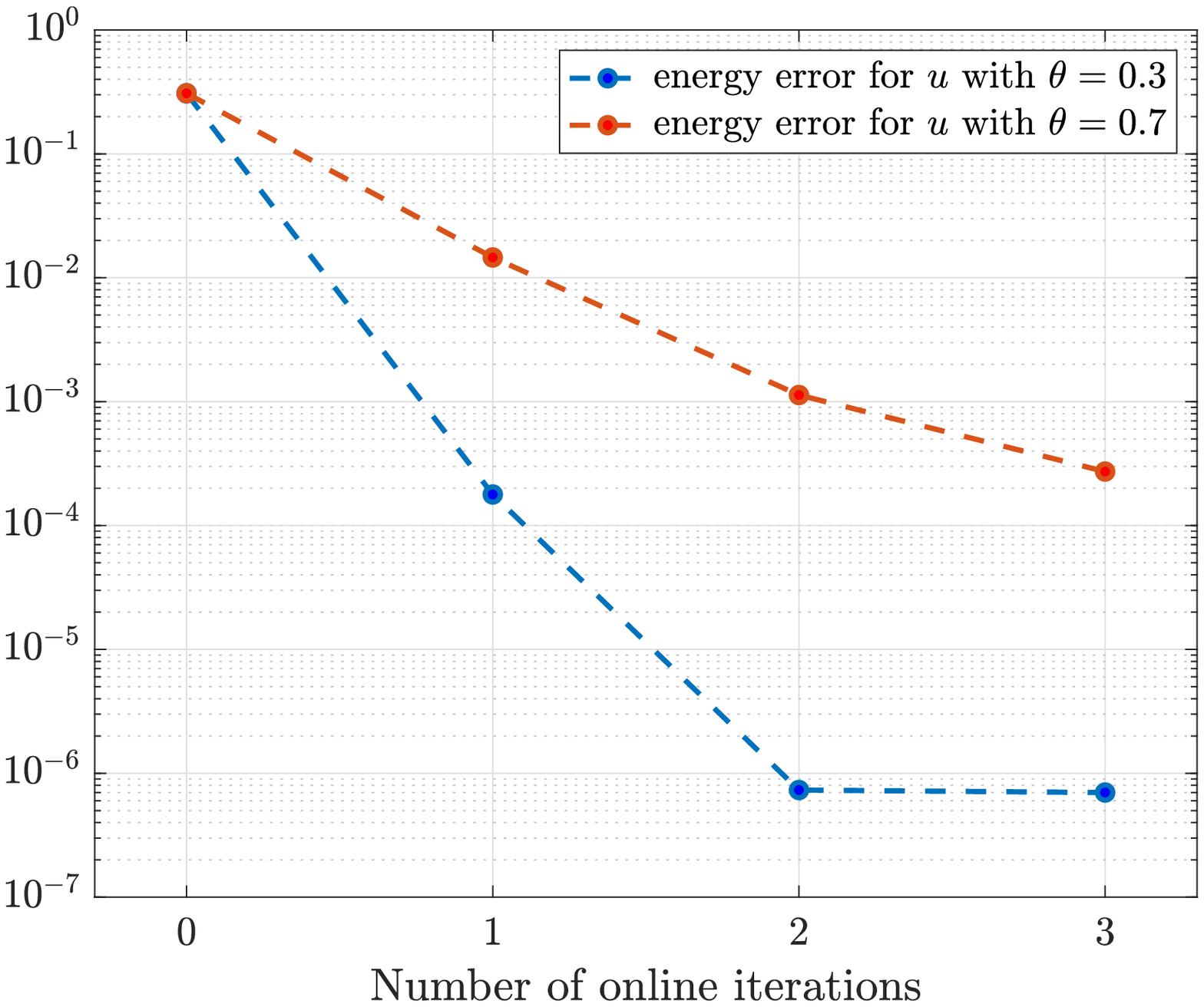} \quad
\includegraphics[width=2.5in]{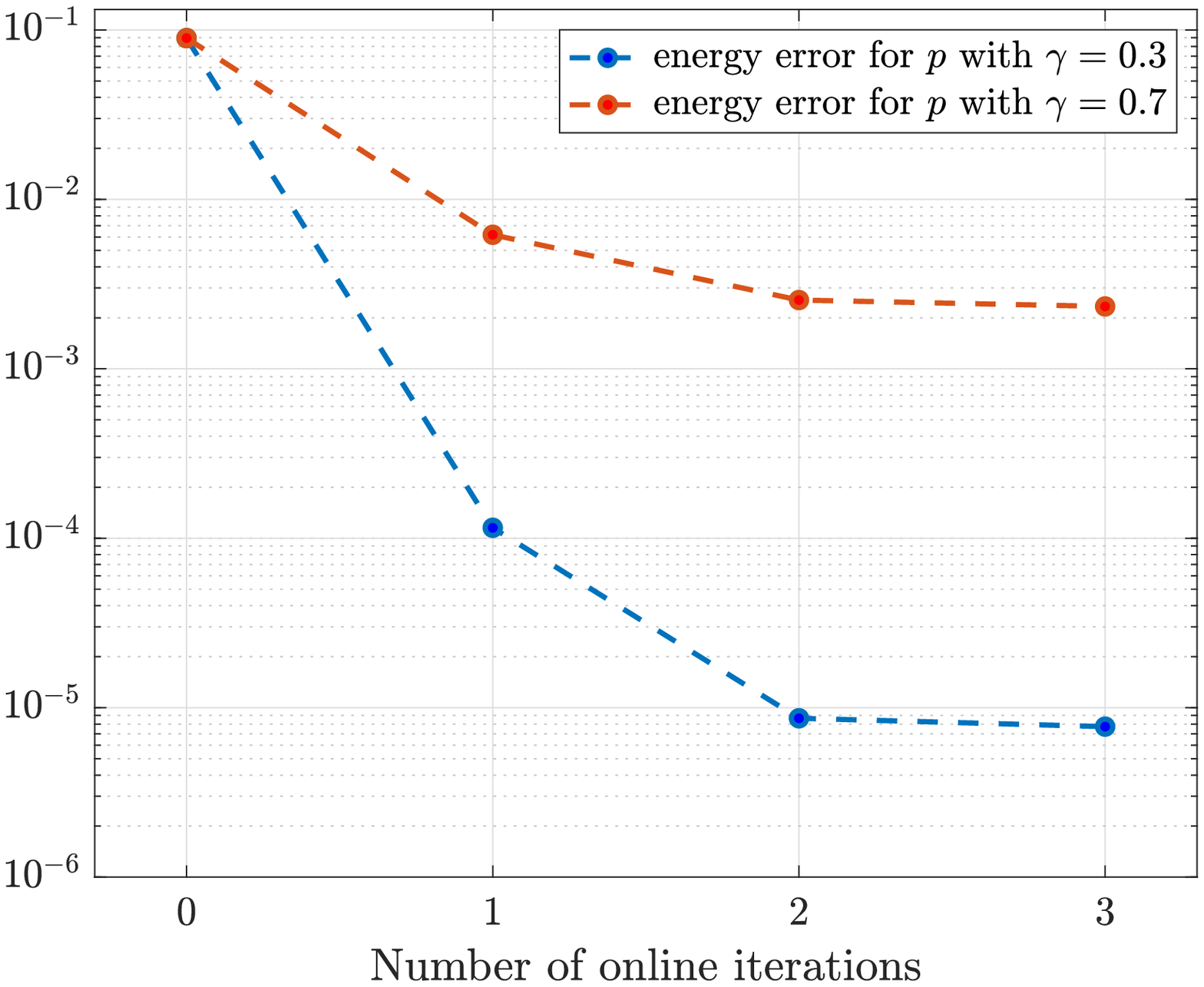}
\caption{Energy errors in Example \ref{exp:2} against the number of online iterations with $\theta=\gamma=0.3$ and $\theta=\gamma=0.7$ using $\omega_i^+$ at  time level $t=0.52$.}
\end{figure}

Figure \ref{fig:5_2_soln_profile} shows the solution profiles  at  time level $t=0.52$. 
We observe a rapid error decay for the first iteration in Example \ref{exp:2}. This remarkable error decay is credited to applying the online process every several steps. The refinement at the previous steps gives us a more accurate approximation of solution at the previous step, which helps getting a better result at the current time step. 

We remark that one can also achieve the level of accuracy  $e_u=13.91\%$ and $e_p=3.86\%$ when the offline degrees of freedom are  $(u_{\text{dof}}^{k}, p_{\text{dof}}^{k})=(1600,1600)$. Compared to this, using the online adaptive method to refine the solution can achieve more accurate results with similar degrees of freedom.

In Example \ref{exp:1}, when the number of online iterations reaches a certain number, there is a bottleneck in the error decay. For instance, in the case with $\theta=\gamma=0.3$ and using the neighborhood-based strategy, the energy error $e_p$ of pressure stalls at $1.21\%$ and the energy error $e_u$ of displacement stalls at around $1.47\%$ in Example \ref{exp:1} after three online iterations. In contrast to this result, Example \ref{exp:2} shows us that performing online enrichment recurrently can resolves this bottleneck. In Example \ref{exp:2}, with online tolerance $\theta=\gamma=0.3$, after two online iterations, the magnitude of the energy errors for both pressure and displacement are reduced to the level of $10^{-4}\%$.

\begin{example}\label{exp:4}
\revi{We consider a time-dependent source function in this example. Specifically, we set the source function to be $f (t, x_1,x_2) =2\pi^2 t \sin(\pi x_1) \sin(\pi x_2)$. The rest of the settings are the same as Example \ref{exp:2}.
It demonstrates the capability of the online enrichment method for the case with time-dependent source term. 
It is worth noting that for time-dependent source term case, the bottleneck caused by online enrichment performed solely at one time step can also be avoided by conducting the online enrichment every several time steps.}
\end{example}

\begin{figure}[htbp!]
\centering
\includegraphics[width = 1.7in]{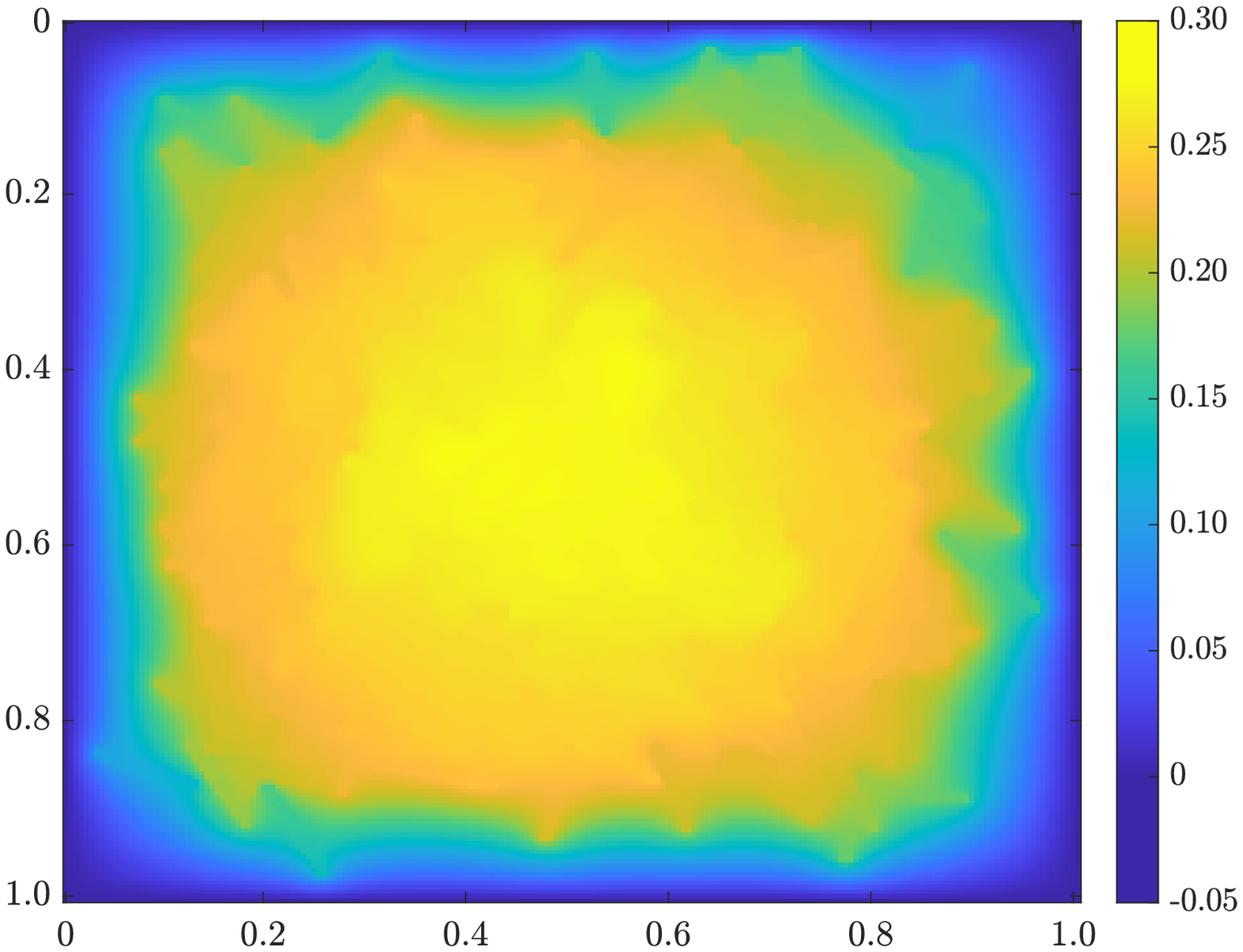} \quad
\includegraphics[width = 1.7in]{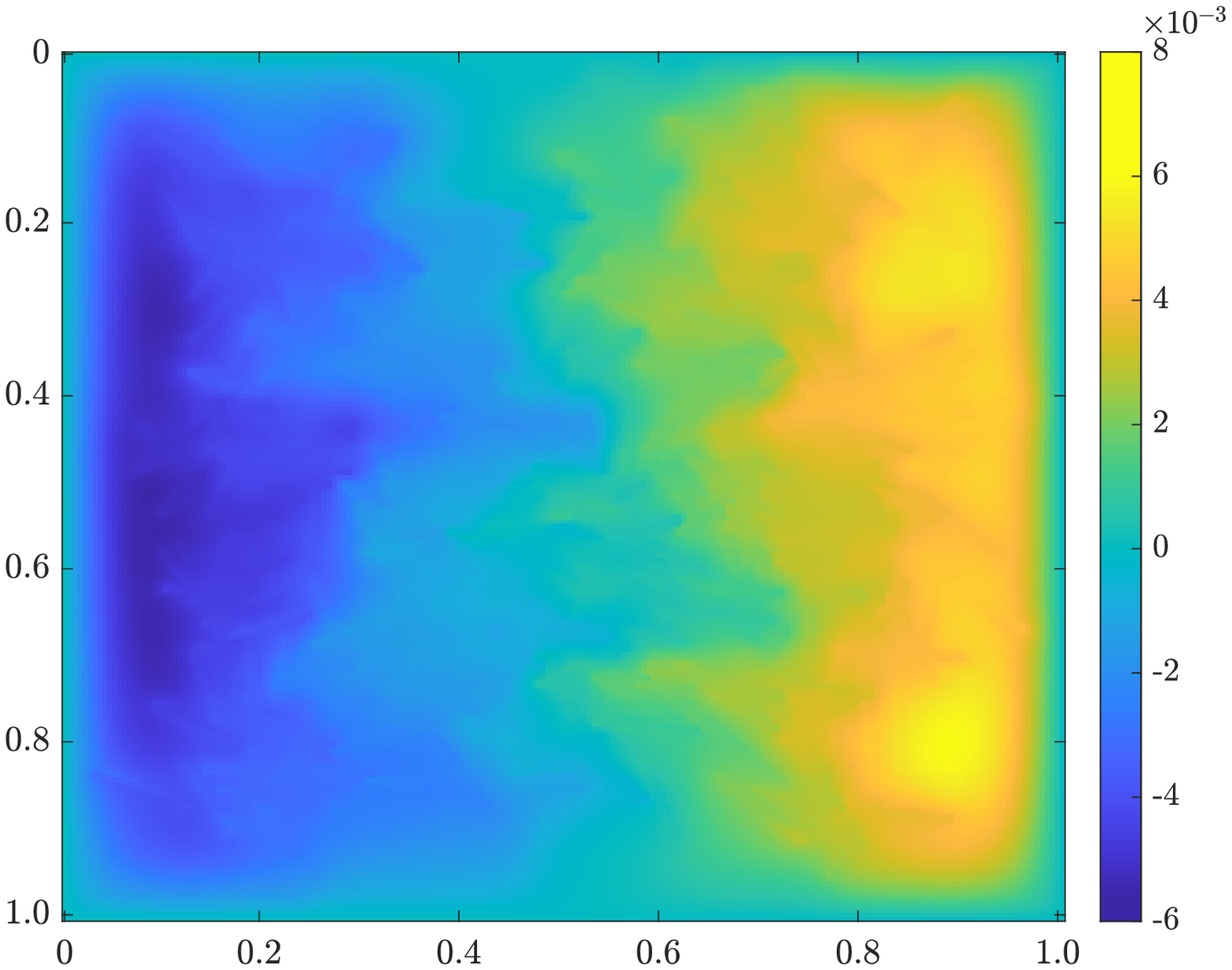}\quad
\includegraphics[width = 1.7in]{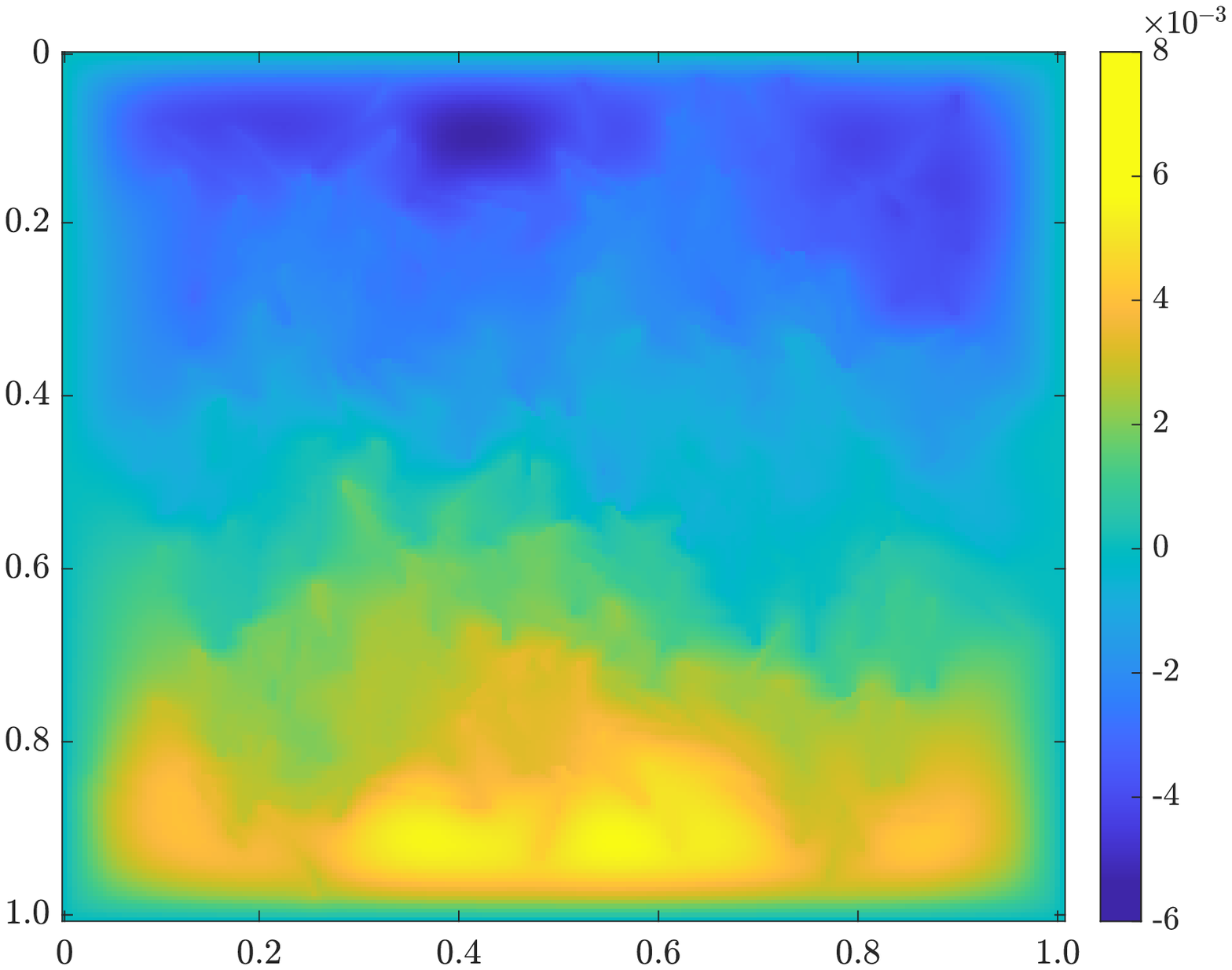}\\
\includegraphics[width = 1.7in]{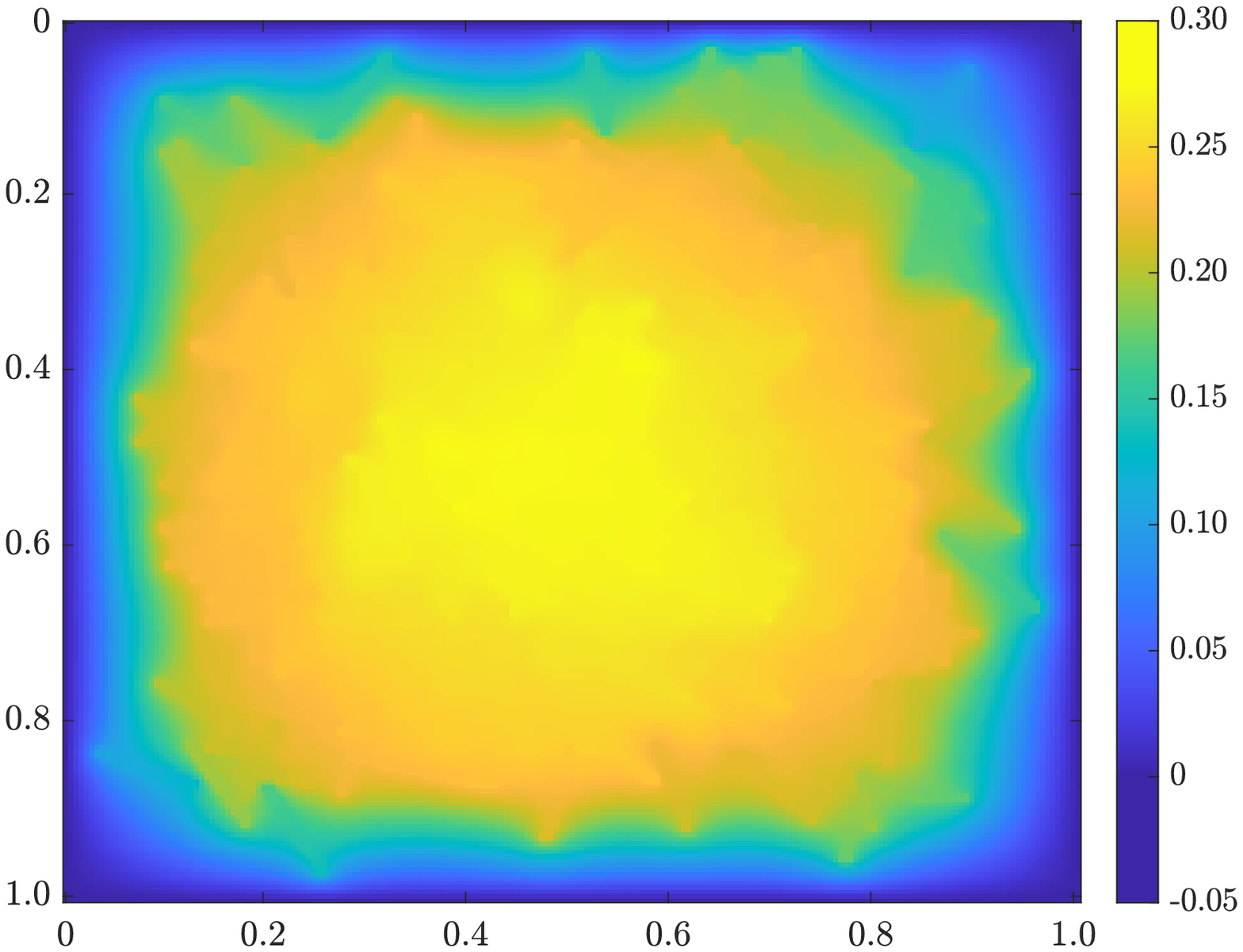}\quad
\includegraphics[width = 1.7in]{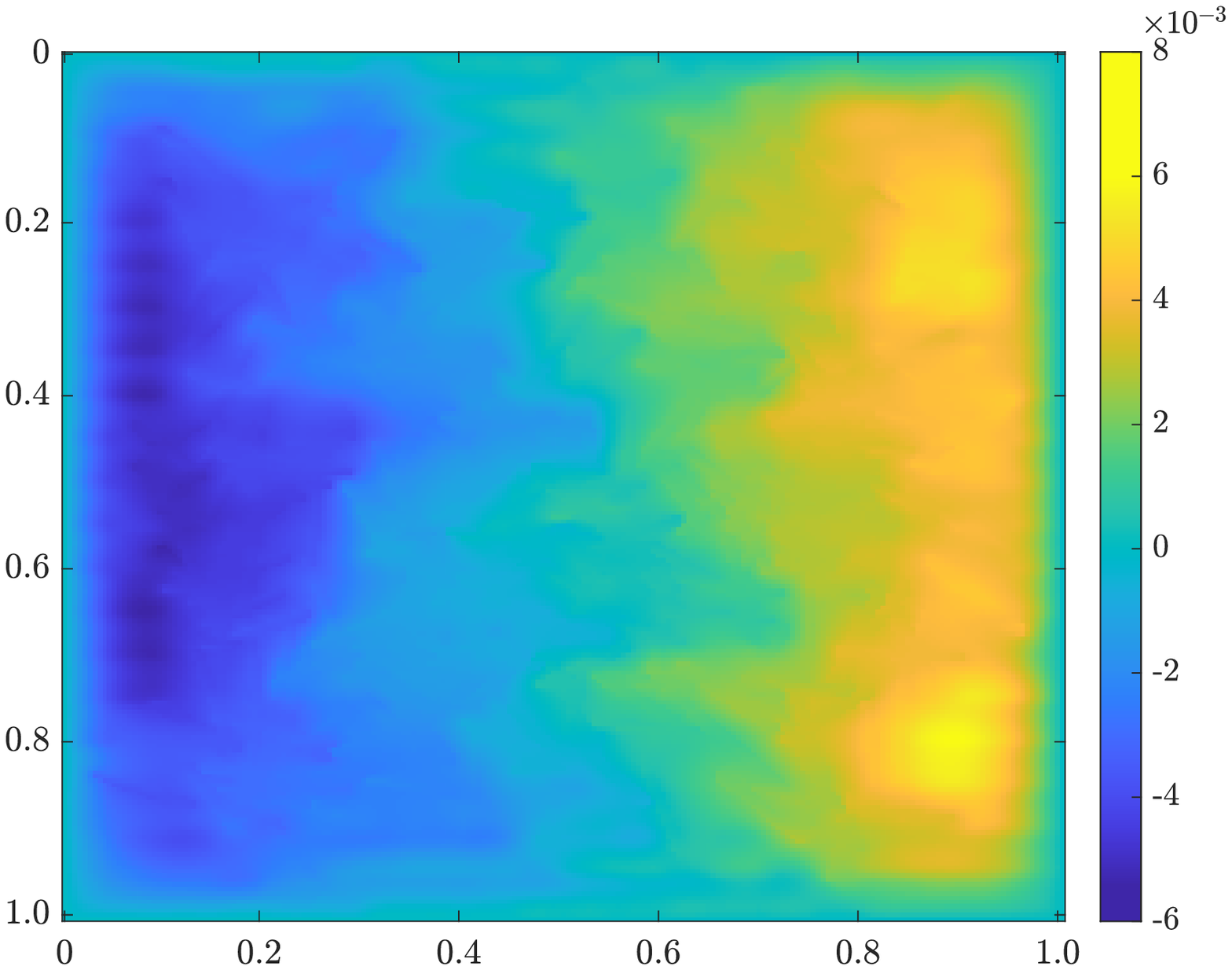}\quad
\includegraphics[width = 1.7in]{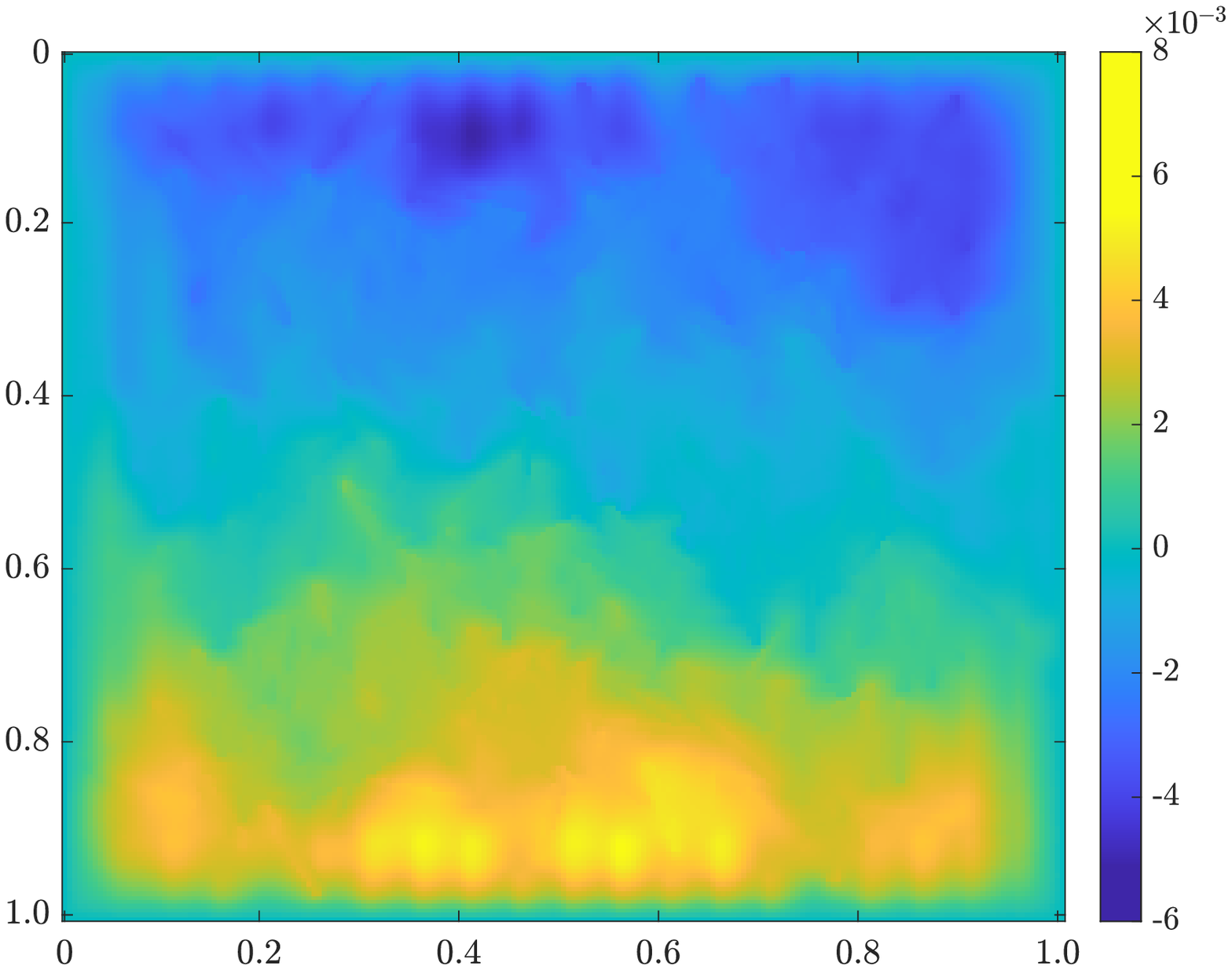}\\
\includegraphics[width = 1.7in]{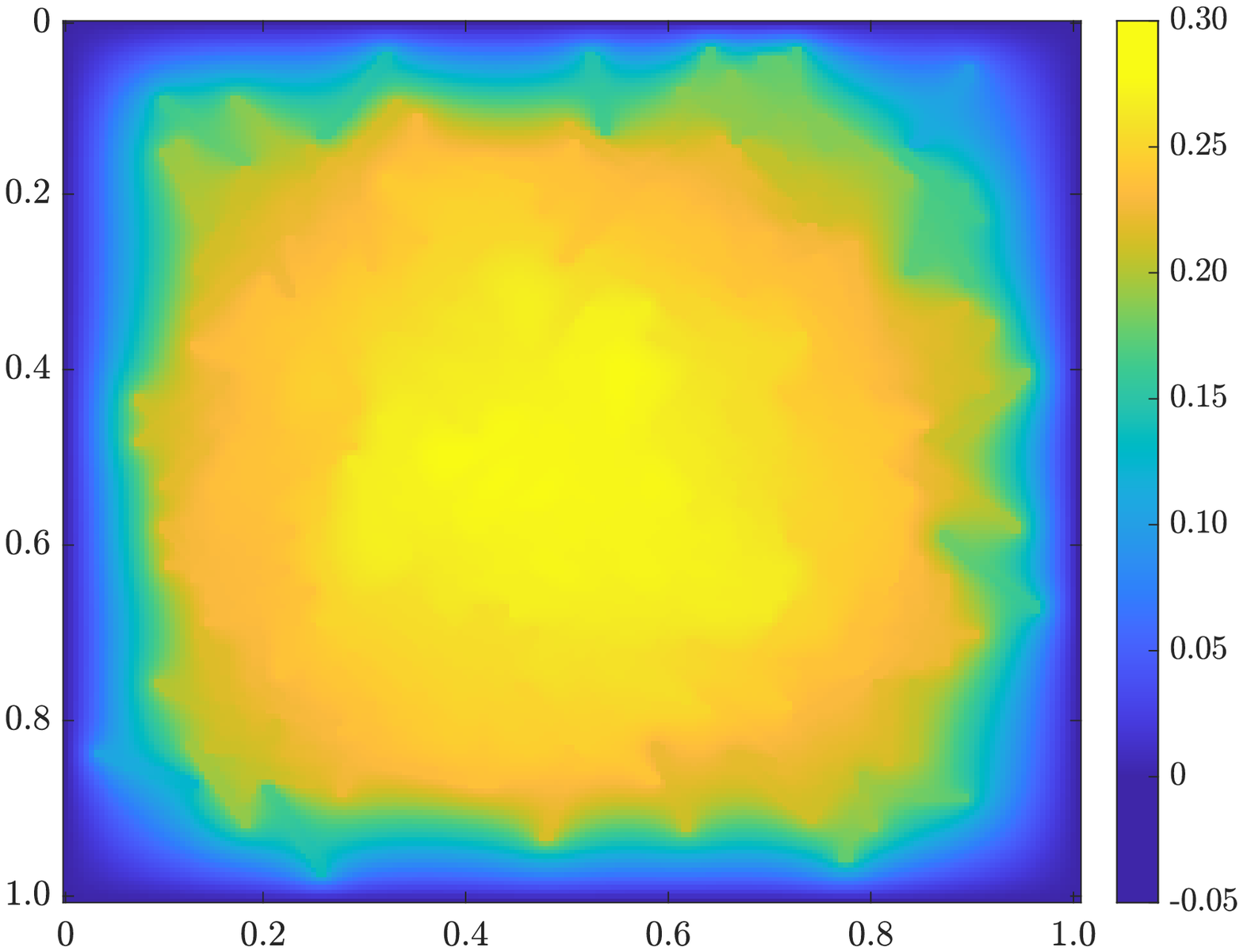}\quad
\includegraphics[width = 1.7in]{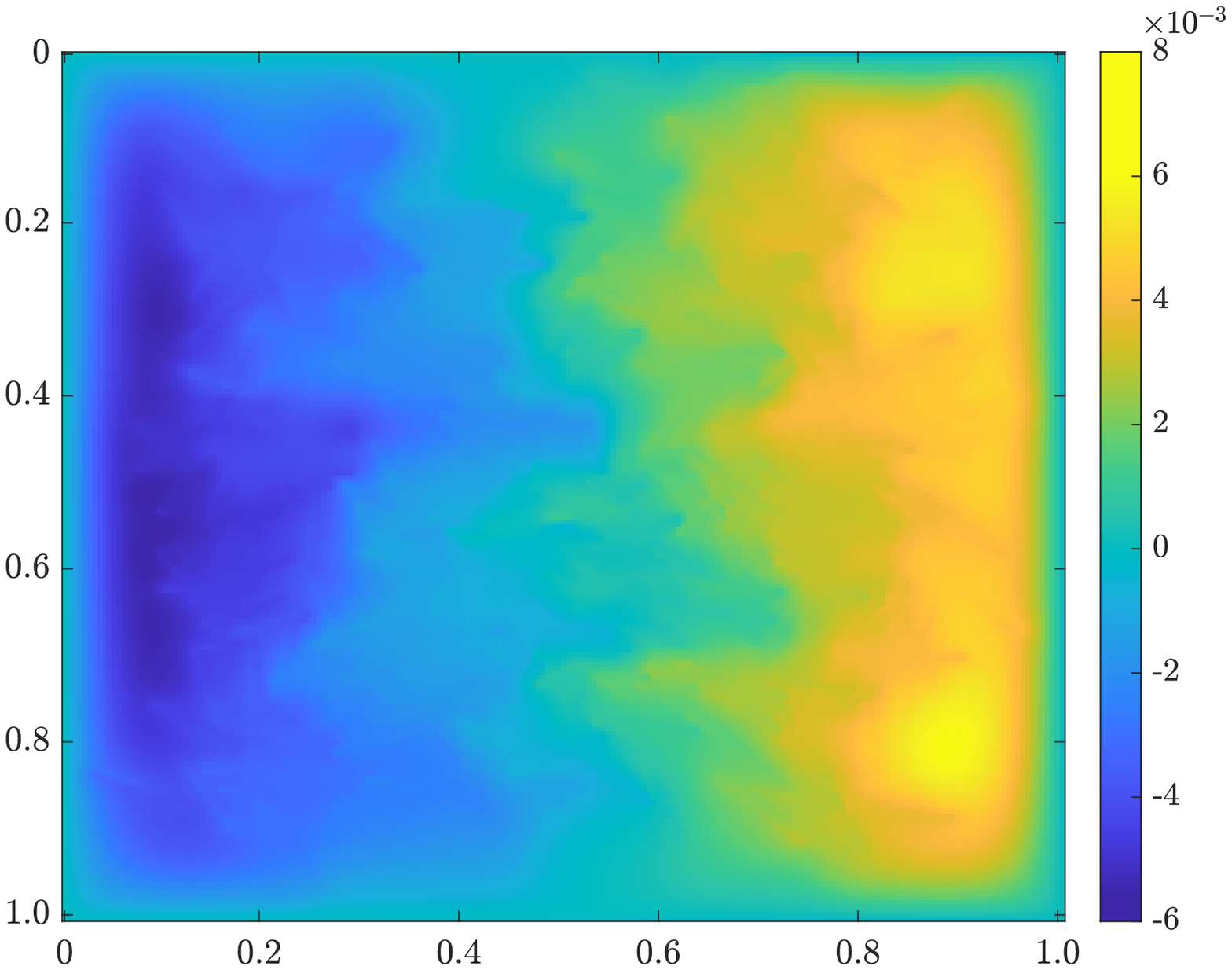}\quad
\includegraphics[width = 1.7in]{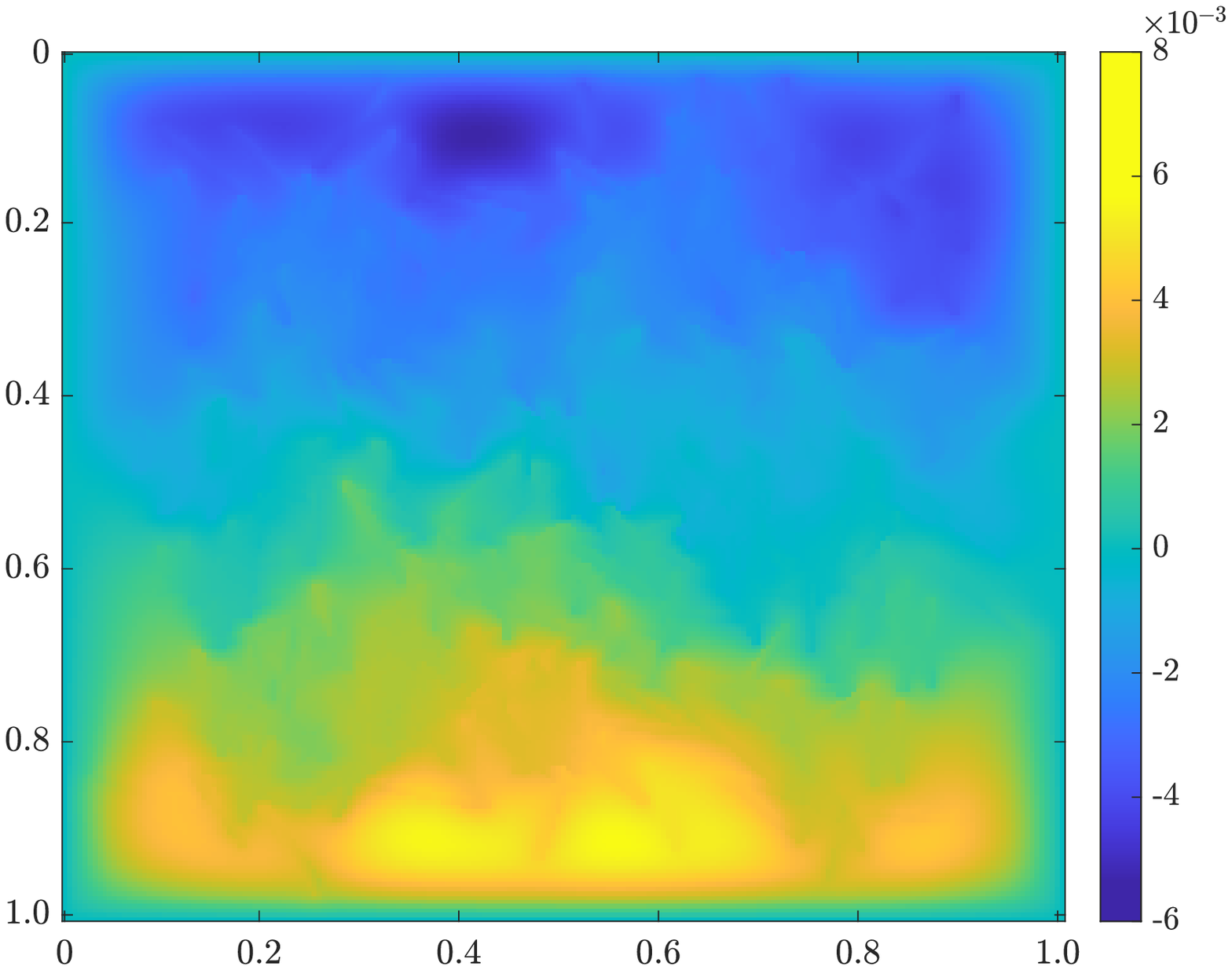}
\caption{Solution profiles (starting from left to right) $p$, $u_1$, $u_2$ of Example \ref{exp:4}. Top: reference solutions; middle: offline solutions; bottom: online solutions with iteration 2 with $\theta=\gamma=0.3$ at  time level $t=T=1$  with  Poisson ratio $0.2$ and the source $f$ is time-dependent. }
\end{figure}

\begin{table}[htbp!]
\centering
\begin{tabular}{c|c||c|c}
$k$ & $(u_{\text{dof}}^{k}, p_{\text{dof}}^{k})$& $e_u$ & $e_p$ \\ 
\hline
$0$ & $(800, 800)$ & $30.83\%$ & $8.95\%$ \\ 
$1$ & $(1065,1094)$ & $2.30\%$ & $1.03\%$ \\ 
$2$ & $(1300,1379)$ & $0.31\%$ & $0.39\%$ \\ 
$3$ & $(1565,1594)$ & $0.05\%$ & $0.04\%$ \\ 
\end{tabular}
\caption{History of online enrichment in Example \ref{exp:4}  with $\theta=\gamma=0.7$, $\omega_i^+$ at  time level $t=T=1$. }
\end{table}

\begin{table}[htbp!]
\centering
\begin{tabular}{c|c||c|c}
$k$ & $(u_{\text{dof}}^{k}, p_{\text{dof}}^{k})$& $e_u$ & $e_p$ \\ 
\hline
$0$ & $(800, 800)$ & $30.83\%$ & $8.95\%$ \\ 
$1$ & $(1523,1518)$ & $0.07\%$ & $0.03\%$ \\ 
$2$ & $(2372,2202)$& $3.14\times 10^{-4}\%$ & $3.49\times 10^{-4}\%$ \\ 
$3$ & $(3070,2410)$ & $3.41\times 10^{-5}\%$ & $3.48\times 10^{-4}\%$ \\ 
\end{tabular}
\caption{History of online enrichment in Example \ref{exp:4}  with $\theta=\gamma=0.3$, $\omega_i^+$ scheme at  time level $t=T=1$. }
\end{table}

\begin{figure}[htbp!]
\centering
\includegraphics[width=2.2in]{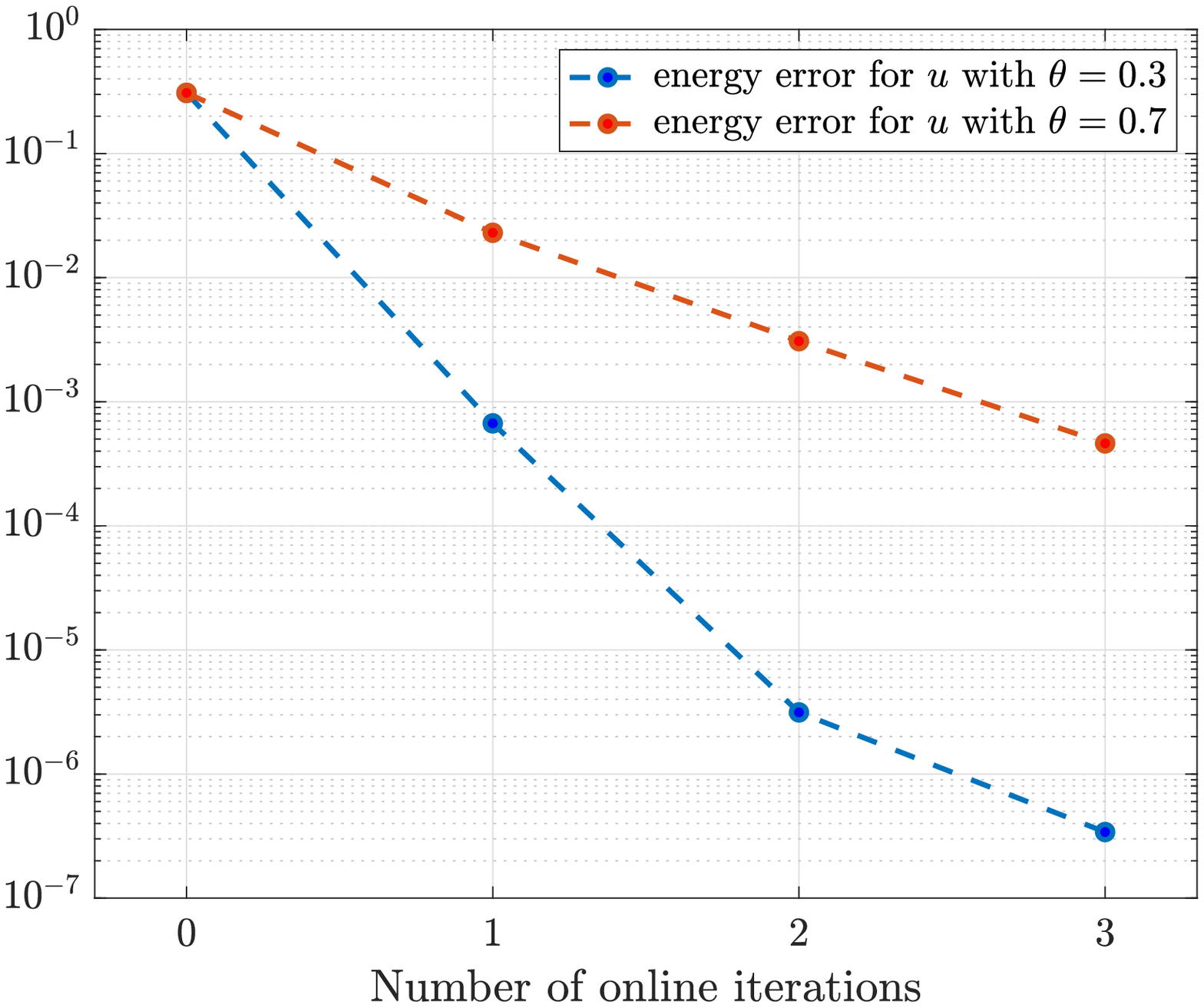} \quad
\includegraphics[width=2.2in]{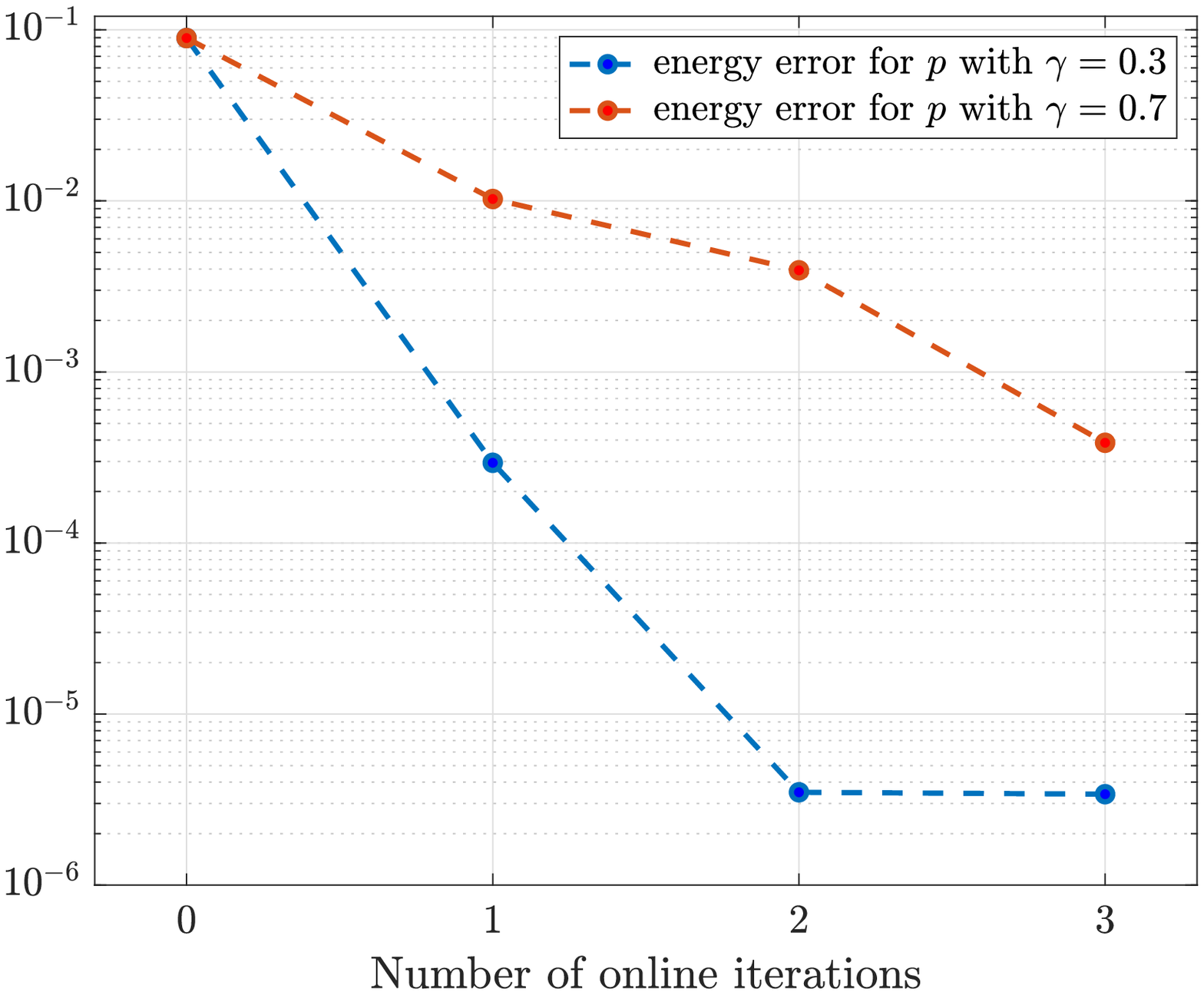}
\caption{Energy errors in Example \ref{exp:4}  against the number of online iterations with $\theta=\gamma=0.3$ and $\theta=\gamma=0.7$ using $\omega_i^+$ scheme at  time level $t=T=1$.}
\end{figure}

\section{Convergence Analysis}\label{sec:anal}
In this section, we analyze the proposed method and provide a theoretical estimate for the online adaptive algorithm. To this aim, we first define two constants $C_0\in  \mathbb{R}$ and $C_1\in  \mathbb{R}$ such that
\begin{equation}\label{eq:constants}
C_0 :=\sup_{0\not=v\in V} \frac{\|\pi_1 (v)\|_{s^1}^2}{\|v\|_a^2} \tand C_1 :=\sup_{0\not =q\in Q} \frac{\|\pi_2 (q)\|_{s^2}^2}{\|q\|_b^2}.\end{equation}
We remark that $C_0\leq \max\{\tilde{\sigma}\} C_p^2$ and $C_1\leq \max\{\tilde{\kappa}\} C_p^2$, where $C_p$ is the Poincar\'{e} constant for the spatial domain $\Omega$, which is the optimal constant in the Poincar\'{e} inequality $\|w\|_{L^2(\Omega)}\leq C_p\|\nabla w\|_{L^2(\Omega)}$ for $w\in H^1_0(\Omega)$.

We remark that the oversampling layers for displacement basis and pressure basis can be different. Also, the oversampling layers at the offline and online stages can be set differently. In the following, for simplicity, we will denote $\ell$ uniformly to be the oversampling parameter for both displacement and pressure basis functions during either offline or online stages. In the convergence analysis, we use the same values for the tolerances $\theta$ and $\gamma$; we shall only write $\theta$ in the analysis. 
Throughout this section, we write $a \lesssim b$ 
where there is a generic constant $C$, independent of spatial discretization parameters and the time step size, such that $a \leq Cb$.

Inspired by Hypothesis 2.2 from \cite{ern2009posteriori} and Lemma 4.4 in \cite{fu2019computational}, we introduce the following assumptions \revii{regarding the multiscale-space construction and the source function and the initial condition of the system for our convergence analysis.} 
\begin{assumption}\label{conv_rate}
\begin{enumerate}
\item
With sufficiently large $\ell$, $J_i^1$, and $J_i^2$, we assume that for all $v\in V$ and $q \in Q$, there exist corresponding elements $v_\ms\in V_\ms$ and $q_{\ms} \in Q_{\ms}$ such that 
\begin{equation}\label{hypothesis2}
\norm{v- v_{\ms}}_a^2 \lesssim  C_e (\ell+1)^d (1+\Lambda^{-1})\norm{v}_a^2, 
\end{equation}
\begin{equation}\label{hypothesis1}
\norm{q-q_\ms}_b^2 \lesssim C_e(\ell+1)^d (1+\Lambda^{-1}) \norm{q}_b^2. 
\end{equation}
Here, the constants $\Lambda$ (see also \eqref{eq:eig1} and \eqref{eq:eig2}) and $C_e$ are defined to be 
\begin{equation}
\displaystyle\Lambda:=\min \left \{ \min_{1\leq i\leq \Ne}\lambda_{J^1_i+1}^i, \min_{1\leq i\leq \Ne}\zeta_{J^2_i+1}^i \right \} \tand C_e := (1+\Lambda^{-1})\left(1+(2(1+\Lambda^{\frac12}))^{-1}\right)^{1-\ell}.
\label{eqn:exp_fact}
\end{equation}
We refer to $C_e$ as the exponential decay factor. 

\item The source function satisfies $f \in {H^1(0,T, H^{-1}(\Omega))}\cap{L^2(0,T, L^2(\Omega))}$.

\item The initial data $p^0$ (see \eqref{eq:init3}) satisfies $p^0 \in H^1_0(\Omega)$. 
\end{enumerate}
\end{assumption}
\noindent The main result of this section is the following convergence theorem.
\begin{theorem}\label{thm:conv}
Suppose that Assumption \ref{conv_rate} holds. 
Let $\{(u_{\mathrm{ms}}^{n,k},p_{\text{ms}}^{n,k}) \in V_{\mathrm{ms}}^{(k)}\times Q_{\mathrm{ms}}^{(k)}\}$ be the sequence of multiscale solutions obtained by our online adaptive enrichment algorithm and $ (u^n,p^n) \in V\times Q$ be the semi-discrete solution of \eqref{eq:semi}. 
Then we have
\begin{equation}\label{eqn:conv}
\begin{aligned}\norm{u^n- u_{\ms}^{n, k+1}}_a^2+\tau\norm{p^n-p_{\ms}^{n,k+1}}_b^2&\lesssim (1+\tau)(1+\Lambda^{-1})[C_r (\ell+1)^d C_e +N_K^2\theta ]\mathcal{E}^k\\
&+\norm{u^{n-1}-u^{n-1}_\ms}_a^2+\norm{p^{n-1}-p^{n-1}_\ms}_b^2
\end{aligned}
\end{equation}
for $n = 1,2,\cdots,N$, where $\mathcal{E}^k := \mathcal{E}^k_u + \mathcal{E}^k_p$, 
$\mathcal{E}_u^k := \norm{u^n-u_\ms^{n,k}}_a^2$, and 
$$
\mathcal{E}_p^k := \norm{D_\tau u^n-D_\tau u_\ms^{n,k}}_a^2+\norm{p^n-p_\ms^{n,k}}_b^2+\norm{D_\tau p^n-D_\tau p_\ms^{n,k}}_c^2.
$$
The constants terms are $C_e$ is the exponential decay factor defined in \eqref{eqn:exp_fact}, $N_K:=\displaystyle{\max_{K \in \mathcal{T}^H} n_K}$ with $n_K$ being the number of coarse nodes of the coarse block $K$, $C_r=\max\{ C_0, C_1\}$, and $\tau$ is the time step size.
\end{theorem}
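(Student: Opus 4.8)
The plan is to marry the standard backward-Euler energy argument for poroelasticity with the online-enrichment reduction estimate of \cite{chung2018fast}, carrying the displacement--pressure coupling through every step. Throughout write $e_u^k := u^n - u_\ms^{n,k}$, $e_p^k := p^n - p_\ms^{n,k}$, and abbreviate $\tilde a(\cdot,\cdot) := a(\cdot,\cdot) + s^1(\pi_1\cdot,\pi_1\cdot)$, $\tilde b(\cdot,\cdot) := b(\cdot,\cdot) + s^2(\pi_2\cdot,\pi_2\cdot)$. Subtracting \eqref{eq:weak2} at enrichment level $k+1$ from the semi-discrete system \eqref{eq:semi} gives Galerkin orthogonality: for all $(v,q) \in V_\ms^{(k+1)} \times Q_\ms^{(k+1)}$,
\[ a(e_u^{k+1},v) - d(v,e_p^{k+1}) = 0, \qquad d(D_\tau e_u^{k+1},q) + c(D_\tau e_p^{k+1},q) + b(e_p^{k+1},q) = 0. \]
Since $D_\tau e_u^{k+1} = \tau^{-1}\big(e_u^{k+1} - (u^{n-1}-u_\ms^{n-1})\big)$, the previous-step errors enter through the discrete time derivative, which is the mechanism producing the last two terms of \eqref{eqn:conv}. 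Rewriting the level-$k$ residuals \eqref{eqn:glo_residual_k} with these identities gives $r_n^{1,k}(v) = a(e_u^k,v) - d(v,e_p^k)$ and $r_n^{2,k}(q) = b(e_p^k,q) + c(D_\tau e_p^k,q) + d(D_\tau e_u^k,q)$, so using $|d(v,q)| \le \beta_1\|v\|_a\|q\|_c$ and $\|\cdot\|_c \le \beta_2\|\cdot\|_b$ one reads off $\|r_n^{1,k}\|_{a'}^2 + \|r_n^{2,k}\|_{b'}^2 \lesssim \mathcal{E}^k$; thus $\mathcal{E}^k$ is precisely the data that the enrichment sees.

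Because $(u_\ms^{n,k+1},p_\ms^{n,k+1})$ is the joint Galerkin solution on the enriched space, I would next derive a C\'ea-type bound adapted to the coupling. Splitting $e_u^{k+1}=\eta_u-\xi_u$ and $e_p^{k+1}=\eta_p-\xi_p$ against an arbitrary candidate $(w,r)\in V_\ms^{(k+1)}\times Q_\ms^{(k+1)}$, with $\xi_u,\xi_p$ the in-space parts, I would test the first error equation with $D_\tau\xi_u$ and the second with $\xi_p$, so that the pure in-space coupling $d(D_\tau\xi_u,\xi_p)$ cancels and only the approximation parts $\eta_u,\eta_p$ survive on the right (the discrete analogue of the classical identity $a(u,D_\tau u)+c(D_\tau p,p)+b(p,p)=(f,p)$). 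Multiplying by $\tau$, the $D_\tau$-testing converts $a(\xi_u,D_\tau\xi_u)$ and $c(D_\tau e_p,\xi_p)$ into the step-$n$ energies minus step-$(n-1)$ boundary terms; bounding the latter by $\|u^{n-1}-u_\ms^{n-1}\|_a^2$ and, via $\|\cdot\|_c\le\beta_2\|\cdot\|_b$, by $\|p^{n-1}-p_\ms^{n-1}\|_b^2$, and absorbing cross terms by Young's inequality, yields
\[ \|e_u^{k+1}\|_a^2 + \tau\|e_p^{k+1}\|_b^2 \lesssim (1+\tau)\,\mathcal{D}(w,r) + \|u^{n-1}-u_\ms^{n-1}\|_a^2 + \|p^{n-1}-p_\ms^{n-1}\|_b^2, \]
where $\mathcal{D}(w,r) := \|u^n-w\|_a^2 + \|D_\tau(u^n-w)\|_a^2 + \|p^n-r\|_b^2 + \|D_\tau(p^n-r)\|_c^2$. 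The factor $(1+\tau)$ is the natural output of this coupled balance, and the previous-step errors are exactly the boundary terms it leaves behind.

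I would then take the candidate to be the level-$k$ solution plus the corrections the algorithm actually builds, $w = u_\ms^{n,k} + \sum_{i\le m_1}\delta_n^{(i,k)}$ and $r = p_\ms^{n,k} + \sum_{i\le m_2}\rho_n^{(i,k)}$, and bound $\mathcal{D}(w,r)$ in two moves. Comparing each local correction $\delta_n^{(i,k)}$ with its global analogue $\delta_\glo^{(i)}$ of \eqref{eqn:glo_online_basis} invokes the CEM-GMsFEM exponential decay, contributing $C_r(\ell+1)^d C_e$, with $(\ell+1)^d$ absorbing the bounded overlap of the oversampled regions. Replacing the full index sum by the marked set is controlled by the D\"orfler criterion of Algorithm \ref{algo:online}, $\sum_{i=m_1+1}^{\Nv}(\eta_{n,i}^{1,k})^2 < \theta\sum_{i=1}^{\Nv}(\eta_{n,i}^{1,k})^2$; since the indicator residuals $z_{n,i}^{1,k}$ of \eqref{eqn:loc_residual_indi} sum, with overlap at most $N_K$, to the global residual, this produces the $N_K^2\theta$ term. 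Converting the $\tilde a$- and $\tilde b$-energies of the corrections back into $a$- and $b$-energy uses the stability constants $C_0,C_1$ of \eqref{eq:constants} together with the spectral gap $\Lambda$ of Assumption \ref{conv_rate}, giving the prefactor $(1+\Lambda^{-1})$ and the bound $\mathcal{D}(w,r) \lesssim (1+\Lambda^{-1})\big[C_r(\ell+1)^d C_e + N_K^2\theta\big]\,\mathcal{E}^k$; substituting this into the C\'ea estimate gives \eqref{eqn:conv}.

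The hard part will be the coupling. The displacement and pressure online bases are generated separately from $r_n^{1,k}$ and $r_n^{2,k}$, yet each residual mixes both error components through $d$ and the storage term $c(D_\tau\cdot,\cdot)$, so the two candidate errors cannot be estimated in isolation; the cross terms must be carried through the energy balance and closed only after both variables have been enriched, keeping $\beta_1,\beta_2$ from degrading the final constant. The delicate bookkeeping is that the very quantities $\|D_\tau e_u^k\|_a^2$ and $\|D_\tau e_p^k\|_c^2$ inside $\mathcal{E}^k$ --- needed to dominate $\|r_n^{2,k}\|_{b'}$ --- are also the ones whose $D_\tau$-testing must leave behind a clean previous-step contribution rather than an uncontrolled time-derivative term.
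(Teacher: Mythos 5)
Your overall architecture --- a quasi-optimality (C\'ea/Ritz) estimate for the coupled backward-Euler system, a candidate in $V_{\ms}^{(k+1)}\times Q_{\ms}^{(k+1)}$ built from the level-$k$ solution plus corrections, and a split of the best-approximation error into a localization part (giving $C_r(\ell+1)^dC_e$) and a D\"orfler-tail part (giving $N_K^2\theta$) --- matches the paper's Step~4, and your identification of $\norm{r_n^{1,k}}_{a'}^2+\norm{r_n^{2,k}}_{b'}^2\lesssim \mathcal{E}^k$ and of the previous-step terms as coming from the discrete time derivative is correct. But there is a genuine gap in your choice of candidate. You take $w=u_{\ms}^{n,k}+\sum_{i\le m_1}\delta_n^{(i,k)}$, i.e.\ only the online corrections. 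The global online function $\delta_{\glo}=\sum_i\delta_{\glo}^{(i)}$ does \emph{not} capture the error $u^n-u_{\ms}^{n,k}$ up to small terms: summing \eqref{eqn:glo_online_basis} only gives
\begin{equation*}
a\bigl(u^n-u_{\ms}^{n,k}-\delta_{\glo},v\bigr)-d\bigl(v,p^n-p_{\ms}^{n,k}\bigr)=s^1\bigl(\pi_1(\delta_{\glo}),\pi_1(v)\bigr),
\end{equation*}
so after introducing the auxiliary function $\widetilde{u}^d$ with $a(\widetilde{u}^d,v)=d(v,p^n-p_{\ms}^{n,k})$ the remainder $u^n-u_{\ms}^{n,k}-\delta_{\glo}-\widetilde{u}^d$ is $a$-orthogonal to $\widetilde{V}$ and hence equals a combination $\sum_{i,j}d_j^{(i)}\psi_j^{(i)}$ of the \emph{global} offline basis functions (the paper's Step~1). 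Both $\widetilde{u}^d$ and $\sum_{i,j}d_j^{(i)}\psi_j^{(i)}$ have $a$-norm of order $\sqrt{\mathcal{E}^k}$, with no small prefactor. With your candidate these two pieces sit untouched inside $u^n-w$, so you can only conclude $\norm{u^n-w}_a^2\lesssim\mathcal{E}^k$, not $\norm{u^n-w}_a^2\lesssim(1+\Lambda^{-1})[C_r(\ell+1)^dC_e+N_K^2\theta]\,\mathcal{E}^k$, and the theorem's contraction-type bound is lost. The paper repairs this by enlarging the candidate to $u_H=u_{\ms}^{n,k}+\sum_{i\in\mathcal{I}_1}\delta_{\ms}^{(i)}+\sum_{i,j}d_j^{(i)}\psi_{j,\ms}^{(i)}+\tilde{u}^d_{\ms}$ (and analogously for pressure with $\widetilde{p}^c$, $\widetilde{p}^d$), so that only the \emph{localization} errors $\psi_j^{(i)}-\psi_{j,\ms}^{(i)}$, $\delta_{\glo}^{(i)}-\delta_{\ms}^{(i)}$ (Lemmas \ref{lm:basiserror}--\ref{lm:onlinebasiserror}), the offline approximation error of Assumption \ref{conv_rate}, and the unmarked D\"orfler tail survive; bounding the coefficients $d_j^{(i)},c_j^{(i)}$ (the paper's Step~2) is precisely what converts these into multiples of $\mathcal{E}^k$ with the small factor.

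Two smaller issues: your quantity $\mathcal{D}(w,r)$ contains $\norm{D_\tau(u^n-w)}_a^2$ and $\norm{D_\tau(p^n-r)}_c^2$, which presuppose a candidate at time level $n-1$ that your construction never specifies; the paper avoids this by using the Ritz projections $\mathfrak{R}_{a,\ms}^{k+1},\mathfrak{R}_{b,\ms}^{k+1}$ (defined at every time level) for the stability part and reserving the explicit candidate only for the static best-approximation terms $\mathcal{U}_{\inf},\mathcal{P}_{\inf}$. Also, your energy argument tests the first error equation with $D_\tau\xi_u$, whereas the paper multiplies the second equation by $\tau$ and tests with $(\eta_u^n,\eta_p^n)$ so that the single coupling term $d(\eta_u^n,\eta_p^n)$ cancels and the $n-1$ data appear directly on the right-hand side; your variant can be made to work but requires additional care to avoid telescoping the candidate over time.
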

\begin{remark}
\revi{We note that the convergence rate of the first term depends on two terms $C_e C_r(\ell+1)^d$ and $N_K^2\theta$. By choosing the number of oversampling layers $\ell=O(\log (C_{\text{par}}/H))$, the term $C_e C_r(\ell+1)^d$ tends to zero.  Thus the factor $N_K^2\theta$ dominates the convergence rate. Here, $C_{\text{par}}$ depends on Lam$\acute{e}$ coefficients, Young's modulus $E$, permeability field $\kappa$ and Poisson ration $\nu_p$, see \cite[Sect.\ 5]{chung2017constraint} for more details.}
\end{remark}

Before showing the proof of Theorem \ref{thm:conv}, we will recall a few important results from \cite{chung2017constraint} that are useful in the convergence analysis. The proofs for the elasticity case are similar to that of the pressure case so we omit them here.

\begin{lemma}\label{lm:projectionexist}
There are constants $D_0$ and $D_1$ such that for all $v_{\text{aux}}\in V_{\text{aux}}$ and $q_{\text{aux}} \in Q_{\text{aux}}$, there exist $v\in V$ and $q\in Q$ such that
\begin{eqnarray*}
\begin{split}
\pi_1(v)&=v_{\text{aux}}, \quad \norm{v}_a^2\leq D_0 \norm{v_{\text{aux}}}_{s^1}^2,  \quad\operatorname{supp}(v)\subset \operatorname{supp}(v_{\text{aux}}), \\
\pi_2(q)&=q_{\text{aux}}, \quad \norm{q}_b^2\leq D_1 \norm{q_{\text{aux}}}_{s^2}^2,  \quad\operatorname{supp}(q)\subset \operatorname{supp}(q_{\text{aux}}).
\end{split}
\end{eqnarray*}
\end{lemma}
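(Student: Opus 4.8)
The plan is to prove Lemma \ref{lm:projectionexist} by a direct construction that exploits the finite-dimensionality and locality of the auxiliary space $V_{\text{aux}}=\bigoplus_i V_{\text{aux}}(K_i)$ (and analogously $Q_{\text{aux}}$), following the argument of \cite[Lem.~2]{chung2017constraint}. The central observation is that $\pi_1$ acts blockwise with respect to the $s^1$-orthogonal decomposition over coarse elements, so it suffices to solve the problem on a single coarse block $K_i$ and then assemble. I would first fix a block $K_i$ and an arbitrary $v_{\text{aux}}\in V_{\text{aux}}(K_i)$, written as $v_{\text{aux}}=\sum_{j=1}^{J_i^1} c_j v_j^i$ in the $s^1_i$-orthonormal eigenbasis from \eqref{eq:eig1}, and seek a preimage $v$ supported in $K_i$.

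\textbf{Construction on a single block.} On $V(K_i)=[H_0^1(K_i)]^d$, consider the constrained minimization
\begin{equation*}
v=\operatorname{argmin}\Big\{\norm{w}_a^2 : w\in V(K_i),\ \pi_1(w)=v_{\text{aux}}\Big\}.
\end{equation*}
Because $V(K_i)$ is a Hilbert space under $a(\cdot,\cdot)$ (using coercivity $c_\sigma\norm{\cdot}_1^2\le\norm{\cdot}_a^2$) and the constraint $\pi_1(w)=v_{\text{aux}}$ is a finite collection of affine conditions $s^1_i(w,v_j^i)=c_j$ that is nonempty (since each $v_j^i\in V(K_i)$ itself satisfies $\pi_1(v_j^i)=v_j^i$ by the normalization $s_i^1(v_j^i,v_{j'}^i)=\delta_{jj'}$), a unique minimizer exists. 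By construction $\operatorname{supp}(v)\subset K_i=\operatorname{supp}(v_{\text{aux}})$, giving the support property directly.

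\textbf{The norm bound.} The key quantitative step, which I expect to be the main obstacle, is establishing $\norm{v}_a^2\le D_0\norm{v_{\text{aux}}}_{s^1}^2$ with a constant $D_0$ uniform over blocks. The natural route is to bound the minimizer's energy by a convenient admissible competitor. The constant $C_0$ from \eqref{eq:constants} controls $\pi_1$ in the reverse direction ($\norm{\pi_1(w)}_{s^1}^2\le C_0\norm{w}_a^2$), so one needs an inequality in the opposite sense relating the energy of the minimal-norm preimage to the $s^1$-norm of its image. Writing $v_{\text{aux}}=\sum_j c_j v_j^i$, one uses that the eigenvalue problem \eqref{eq:eig1} provides $a_i(v_j^i,v_j^i)=\lambda_j^i s_i^1(v_j^i,v_j^i)=\lambda_j^i$; thus the competitor $\tilde v=\sum_{j=1}^{J_i^1}c_j v_j^i$ satisfies $\pi_1(\tilde v)=v_{\text{aux}}$ and, by $s^1_i$-orthogonality together with $a_i$-orthogonality of distinct eigenfunctions, $\norm{\tilde v}_a^2=\sum_j c_j^2\lambda_j^i\le \lambda_{J_i^1}^i\sum_j c_j^2=\lambda_{J_i^1}^i\norm{v_{\text{aux}}}_{s^1_i}^2$. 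Since the minimizer $v$ has energy no larger than $\tilde v$, this yields $\norm{v}_a^2\le\lambda_{J_i^1}^i\norm{v_{\text{aux}}}_{s^1}^2$, so $D_0$ can be taken as $\max_i\lambda_{J_i^1}^i$ (the largest retained eigenvalue); one may further simplify the bound to a clean block-independent constant as in \cite{chung2017constraint}. The difficulty lies precisely in choosing the competitor sharply enough that the resulting $D_0$ is controlled by quantities already available in the offline construction rather than by an uncontrolled spectral gap.

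\textbf{Assembly.} Finally, for a general $v_{\text{aux}}\in V_{\text{aux}}=\bigoplus_i V_{\text{aux}}(K_i)$, I would apply the single-block construction to each component $v_{\text{aux}}|_{K_i}$ to obtain local preimages $v^{(i)}$ supported in $K_i$, and set $v:=\sum_i v^{(i)}$. Disjointness of the interiors of the coarse blocks gives $\pi_1(v)=v_{\text{aux}}$, $\operatorname{supp}(v)\subset\operatorname{supp}(v_{\text{aux}})$, and additivity of both $\norm{\cdot}_a^2$ and $\norm{\cdot}_{s^1}^2$ over the blocks, so $\norm{v}_a^2=\sum_i\norm{v^{(i)}}_a^2\le D_0\sum_i\norm{v_{\text{aux}}|_{K_i}}_{s^1}^2=D_0\norm{v_{\text{aux}}}_{s^1}^2$. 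The pressure statement follows verbatim by replacing $a$, $s^1$, $\pi_1$, $V$, $\lambda_j^i$ with $b$, $s^2$, $\pi_2$, $Q$, $\zeta_j^i$ and using \eqref{eq:eig2}, which is why the excerpt states the two assertions in parallel and omits the elasticity computation.
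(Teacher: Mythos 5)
There is a genuine gap, and it sits exactly where you predicted the main obstacle would be. The skeleton of your argument (constrained energy minimization on each block, then assembly over the $s^1$-orthogonal decomposition) is the right one and matches the construction in \cite{chung2017constraint}, which is all the paper itself offers, since it states this lemma without proof and defers to that reference. But the competitor you use to bound the minimizer's energy is inadmissible. The eigenfunctions $v_j^i$ of \eqref{eq:eig1} are computed over $V(K_i):=V\vert_{K_i}$, a Neumann-type space with no boundary condition on $\partial K_i$; they do not vanish on $\partial K_i$ in general. The admissible set for your minimization consists of functions supported in $K_i$ that extend by zero to elements of $V=[H_0^1(\Omega)]^d$, i.e.\ effectively $[H_0^1(K_i)]^d$, and $\tilde v=\sum_j c_j v_j^i$ does not belong to it. Consequently the identity $\norm{\tilde v}_a^2=\sum_j (c_j)^2\lambda_j^i$ bounds the energy of a function outside the feasible set, the conclusion $\norm{v}_a^2\le \lambda_{J_i^1}^i\norm{v_{\text{aux}}}_{s^1}^2$ does not follow, and the proposed constant $D_0=\max_i\lambda_{J_i^1}^i$ is unjustified. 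The same confusion infects your nonemptiness argument (``each $v_j^i\in V(K_i)$'' with $V(K_i)=[H_0^1(K_i)]^d$ is false); nonemptiness can be rescued by noting that $w\mapsto\big(s_i^1(w,v_j^i)\big)_{j\le J_i^1}$ is surjective from $[H_0^1(K_i)]^d$ onto $\mathbb{R}^{J_i^1}$ by density of $[H_0^1(K_i)]^d$ in the $\tilde\sigma$-weighted $L^2$ space, but that repair does not produce the quantitative bound.

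The missing ingredient is precisely the one the cited lemma supplies: the constrained minimization is equivalent to a saddle-point problem (find $v\in[H_0^1(K_i)]^d$ and a multiplier $\mu\in V_{\text{aux}}(K_i)$ with $a(v,w)+s^1(\pi_1(w),\mu)=0$ and $s^1(\pi_1(v)-v_{\text{aux}},\nu)=0$), and the bound $\norm{v}_a^2\le D_0\norm{v_{\text{aux}}}_{s^1}^2$ is equivalent to a local inf-sup condition
\begin{equation*}
\inf_{0\neq\nu\in V_{\text{aux}}(K_i)}\ \sup_{0\neq w\in [H_0^1(K_i)]^d}\ \frac{s^1\big(\pi_1(w),\nu\big)}{\norm{w}_{a}\,\norm{\nu}_{s^1}}\ \geq\ D_0^{-1/2}>0,
\end{equation*}
uniformly over coarse blocks. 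Establishing this (via the partition-of-unity structure of $\tilde\sigma$) is the actual content of the lemma in \cite{chung2017constraint}; $D_0$ is governed by this inf-sup constant rather than by the largest retained eigenvalue. Your assembly step and the verbatim transfer to the pressure variable are fine once the single-block bound is in place.
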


\begin{lemma} 
\label{lm:basiserror}
We consider the oversampled domain $K_{i}^{+}$ obtained from $K_{i}$ by extending  $\ell$ coarse grid layers with $\ell \geq 2$ .  Let  $v_{j}^{(i)} \in V_{\text {aux }}$ be a given auxiliary multiscale basis function. We let $ \psi_{j, \ms}^{(i)}$ be the multiscale basis functions obtained in \eqref{eq:mineq1_loc} and let $\psi_{j}^{(i)}$ be the global multiscale basis functions obtained in \eqref{eq:minglo1}. Then we have 

$$
\left\|\psi_{j}^{(i)}-\psi_{j,\ms}^{(i)}\right\|_{a}^{2}+\left\|\pi_1\left(\psi_{j}^{(i)}-\psi_{j,\ms}^{(i)}\right)\right\|_{s^1}^{2} \leq C_e\left(\left\|\psi_{j}^{(i)}\right\|_{a}^{2}+\left\|\pi_1\left(\psi_{j}^{(i)}\right)\right\|_{s^1}^{2}\right)
$$
where $C_e$ is the exponential decay factor defined in \eqref{eqn:exp_fact}. 

Similarly, let  $q_{j}^{(i)} \in Q_{\text {aux }}$ be a given auxiliary multiscale basis function, $ \phi_{j, \ms}^{(i)}$ be the multiscale basis functions obtained in \eqref{eq:mineq2_loc} and let $\phi_{j}^{(i)}$ be the global multiscale basis functions obtained in \eqref{eq:minglo2}. Then we have 
$$
\left\|\phi_{j}^{(i)}- \phi_{j, \ms}^{(i)}\right\|_{b}^{2}+\left\|\pi_2\left(\phi_{j}^{(i)}- \phi_{j, \ms}^{(i)}\right)\right\|_{s^2}^{2} \leq C_e\left(\left\|\phi_{j}^{(i)}\right\|_{b}^{2}+\left\|\pi_2\left(\phi_{j}^{(i)}\right)\right\|_{s^2}^{2}\right).
$$
\end{lemma}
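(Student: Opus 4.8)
The plan is to handle both assertions through the single symmetric, coercive, continuous bilinear form
$A(w,v) := a(w,v) + s^1(\pi_1 w, \pi_1 v)$ on $V$ (and its analogue $B(w,v) := b(w,v) + s^2(\pi_2 w, \pi_2 v)$ on $Q$), which induces the energy norm $\norm{w}_A^2 := \norm{w}_a^2 + \norm{\pi_1 w}_{s^1}^2$ appearing on both sides of the claim. The heart of the matter is an exponential-decay estimate for the \emph{global} basis function away from $K_i$, after which the localization error is obtained from a Céa-type best-approximation bound. Since the displacement and pressure cases are structurally identical (one replaces $a, s^1, \pi_1$, and the spectrum $\{\lambda_j^i\}$ by $b, s^2, \pi_2$, and $\{\zeta_j^i\}$), I would write the argument only for the displacement basis.

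First I would record Galerkin orthogonality. Because $V(K_{i,\ell}) \subset V$ and the global problem \eqref{eq:minglo1eqn} and the local problem \eqref{eq:mineq1_loc} share the same right-hand side $s^1(v_j^i, \pi_1(\cdot))$, the error $e := \psi_j^{(i)} - \psi_{j,\ms}^{(i)}$ satisfies $A(e,v)=0$ for all $v \in V(K_{i,\ell})$. Hence $\psi_{j,\ms}^{(i)}$ is the $A$-orthogonal projection of $\psi_j^{(i)}$ onto $V(K_{i,\ell})$, and for every competitor $w \in V(K_{i,\ell})$,
$$\norm{e}_A^2 = A(e, \psi_j^{(i)} - w) \leq \norm{e}_A \, \norm{\psi_j^{(i)} - w}_A,$$
so that $\norm{e}_A \le \norm{\psi_j^{(i)} - w}_A$. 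It then suffices to exhibit one good $w$ supported in $K_{i,\ell}$.

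Next I would construct that competitor by layer cutoffs and close the estimate by a recursion driven by the spectral gap. I introduce a cutoff $\chi$ assembled from the coarse partition of unity with $\chi \equiv 1$ on $K_{i,\ell-1}$, $\chi \equiv 0$ outside $K_{i,\ell}$, $0 \le \chi \le 1$, and $H\abs{\nabla\chi}=O(1)$, and set $w := \chi\,\psi_j^{(i)}$, corrected by a term supplied by Lemma \ref{lm:projectionexist} so that $w \in V(K_{i,\ell})$ and $\pi_1 w$ reproduces $\pi_1 \psi_j^{(i)}$ on the inner layers. Then $\psi_j^{(i)} - w$ is supported in the annulus $\Omega \setminus K_{i,\ell-1}$, and the product rule $\abs{\nabla(\chi\psi)} \lesssim \abs{\nabla\chi}\abs{\psi} + \abs{\chi}\abs{\nabla\psi}$ together with the coercivity constants $c_\sigma, C_\sigma$ bounds $\norm{\psi_j^{(i)}-w}_A^2$ by the $A$-energy of $\psi_j^{(i)}$ over that annulus. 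The decisive input is that $\pi_1(\psi_j^{(i)})$ is supported in $K_i$, so on any coarse block disjoint from $K_i$ the function $\psi_j^{(i)}$ lies in the kernel $\tilde{V}$ of $\pi_1$, where the spectral problem \eqref{eq:eig1} and the definition of $\Lambda$ give $\norm{v}_{s^1}^2 \le \Lambda^{-1}\norm{v}_a^2$. Letting $E_m$ denote the $A$-energy of $\psi_j^{(i)}$ outside $K_{i,m}$ and testing \eqref{eq:minglo1eqn} against a cutoff localized to the $m$-th layer, I would derive the contraction $E_{m+1} \le (1+(2(1+\Lambda^{1/2}))^{-1})^{-1}E_m$; iterating $\ell-1$ times yields exactly the factor $(1+(2(1+\Lambda^{1/2}))^{-1})^{1-\ell}$, and combined with the prefactor $(1+\Lambda^{-1})$ this reconstitutes $C_e$ from \eqref{eqn:exp_fact}. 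Together with the best-approximation bound this gives the asserted inequality.

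The hard part will be steps two and three in tandem: the cutoff and its correction must be chosen so that multiplication by $\chi$ neither destroys membership in the localized space $V(K_{i,\ell})$ nor inflates the $s^1$-term, since $\pi_1$ acts nonlocally \emph{within} each coarse block. Controlling the commutator between cutoff multiplication and the projection $\pi_1$, and tracking precisely how $\Lambda$ enters each layer of the recursion, is where the exponential dependence on $\ell$ is actually generated; the remaining Cauchy–Schwarz and product-rule manipulations are routine.
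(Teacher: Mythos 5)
Your overall architecture --- Galerkin orthogonality of the localization error, a C\'ea-type best-approximation bound, a cutoff competitor built from the coarse partition of unity and corrected via Lemma \ref{lm:projectionexist}, and a layer-by-layer contraction producing the factor $\left(1+(2(1+\Lambda^{1/2}))^{-1}\right)^{1-\ell}$ --- is exactly the standard argument of \cite{chung2017constraint}, which is all the paper itself offers here (the lemma is quoted from that reference without proof). Your first step is airtight: since \eqref{eq:minglo1eqn} and \eqref{eq:mineq1_loc} share the right-hand side $s^1(v_j^i,\pi_1(\cdot))$ and $V(K_{i,\ell})\subset V$, the error is orthogonal to $V(K_{i,\ell})$ in the combined form $A$, and the best-approximation bound follows.

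However, the step you yourself single out as decisive is misstated. It is not true in general that $\pi_1(\psi_{j}^{(i)})$ is supported in $K_i$, nor that $\psi_j^{(i)}$ restricted to coarse blocks disjoint from $K_i$ lies in $\tilde{V}$: the relaxed (penalized) formulation \eqref{eq:minglo1} imposes no constraint forcing $\pi_1(\psi_j^{(i)})$ to vanish outside $K_i$, and $\pi_1(\psi_j^{(i)})$ is generically nonzero (though exponentially small) on distant blocks --- that smallness is the \emph{conclusion} of the decay estimate, not an admissible input to it. What actually drives the recursion is the localization of the right-hand-side functional: for any test function $v$ supported in $\Omega\setminus K_i$ one has $s^1(v_j^i,\pi_1(v))=0$, because $v_j^i$ is supported in $K_i$ and both $s^1$ and $\pi_1$ act blockwise; hence $a(\psi_j^{(i)},v)+s^1\big(\pi_1(\psi_j^{(i)}),\pi_1(v)\big)=0$, i.e.\ $\psi_j^{(i)}$ is $A$-harmonic away from $K_i$. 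The Caccioppoli-type iteration tests this identity with cutoff multiples of $\psi_j^{(i)}$ supported in successive annuli, and $\Lambda^{-1}$ enters through the spectral bound $\norm{w-\pi_1(w)}_{s^1}^2\le\Lambda^{-1}\norm{w}_a^2$ applied to those (arbitrary, not kernel) test functions. With the decisive input corrected in this way, the remainder of your sketch --- the commutator control between the cutoff and $\pi_1$ and the assembly of $C_e$ from the per-layer contraction --- goes through as in \cite{chung2017constraint}.
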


\begin{lemma} \label{lm:basiserror_sum}
Assume the same conditions in Lemma \ref{lm:basiserror}, we have
$$
\begin{aligned}
&~~\norm{\sum_{i=1}^{\Ne} \sum_{j=1}^{J_{i}^1} d_{j}^{(i)}(\psi_{j}^{(i)}-\psi_{j, \ms}^{(i)})}_{a}^{2}+\norm{\pi_1(\sum_{i=1}^{\Ne} \sum_{j=1}^{J_{i}^1} d_{j}^{(i)}(\psi_{j}^{(i)}-\psi_{j, \ms}^{(i)}) )}_{s^1}^2 \\
& \lesssim \left(1+\Lambda^{-1}\right)(\ell+1)^{d} \sum_{i=1}^{\Ne} \left (\norm{\sum_{j=1}^{J_{i}^1} d_{j}^{(i)}(\psi_{j}^{(i)}-\psi_{j,\ms}^{(i)})}_{a}^{2}+\norm{\pi_1(\sum_{j=1}^{J_{i}^1}d_{j}^{(i)}(\psi_{j}^{(i)}-\psi_{j,\ms}^{(i)}))}_{s^1}^{2}\right),\end{aligned}
$$
and
$$
\begin{aligned}
&~~\norm{\sum_{i=1}^{\Ne} \sum_{j=1}^{J_{i}^2} c_{j}^{(i)}(\phi_{j}^{(i)}-\phi_{j, \text{ms}}^{(i)})}_{b}^{2}+\norm{\pi_2(\sum_{i=1}^{\Ne} \sum_{j=1}^{J_{i}^2} c_{j}^{(i)}(\phi_{j}^{(i)}-\phi_{j, \text{ms}}^{(i)}) )}_{s^2}^{2} \\
& \lesssim \left(1+\Lambda^{-1}\right)(\ell+1)^{d} \sum_{i=1}^{\Ne}\left(\norm{\sum_{j=1}^{J_{i}^2} c_{j}^{(i)}(\phi_{j}^{(i)}-\phi_{j, \text{ms}}^{(i)})}_{b}^{2}+\norm{\pi_2(\sum_{j=1}^{J_{i}^2} c_{j}^{(i)}(\phi_{j}^{(i)}-\phi_{j, \text{ms}}^{(i)}))}_{s^2}^{2}\right).\end{aligned}
$$
\end{lemma}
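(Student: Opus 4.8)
The plan is to read the statement as an instance of the standard finite-overlap (``coloring'') estimate for the CEM energy, exploiting that the localized functions $\psi_{j,\ms}^{(i)}$ are genuinely supported in the oversampled block $K_{i,\ell}$, while the global functions $\psi_j^{(i)}$ decay exponentially away from $K_i$ as quantified in Lemma~\ref{lm:basiserror}. I would write only the displacement (elasticity) case, the pressure case being identical with $a,\,s^1,\,\pi_1,\,\psi$ replaced by $b,\,s^2,\,\pi_2,\,\phi$. First introduce the symmetric positive-definite form $\mathcal{A}(w,v):=a(w,v)+s^1(\pi_1 w,\pi_1 v)$, whose induced norm satisfies $\norm{w}_{\mathcal{A}}^2=\norm{w}_a^2+\norm{\pi_1 w}_{s^1}^2$, so that Cauchy--Schwarz holds for $\mathcal{A}$. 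Writing $R_i:=\sum_{j=1}^{J_i^1} d_j^{(i)}\bigl(\psi_j^{(i)}-\psi_{j,\ms}^{(i)}\bigr)$, the claim is exactly $\norm{\sum_i R_i}_{\mathcal{A}}^2\lesssim (1+\Lambda^{-1})(\ell+1)^d\sum_i \norm{R_i}_{\mathcal{A}}^2$.

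The geometric input is the bounded overlap of the oversampled regions: each coarse block meets at most $O((\ell+1)^d)$ of the dilated blocks $K_{i,\ell}$, so the index set splits into $O((\ell+1)^d)$ colors within which the $K_{i,\ell}$ have pairwise disjoint interiors. Treating for the moment each $R_i$ as supported in $K_{i,\ell}$, I would exploit this on the two pieces of $\norm{\cdot}_{\mathcal{A}}$ separately. For the projection piece, use that $s^1=\sum_k s^1_k$ splits over coarse elements and that the auxiliary functions on distinct $K_k$ are $s^1$-orthogonal, whence $\norm{\pi_1(\sum_i R_i)}_{s^1}^2=\sum_k \norm{\pi_1^k(\sum_i R_i)}_{s^1_k}^2$, with $\pi_1^k$ the local projection onto $V_{\aux}(K_k)$; for each fixed $K_k$ only the $O((\ell+1)^d)$ indices $i$ whose region reaches $K_k$ contribute, and Cauchy--Schwarz over this finite family yields the factor $(\ell+1)^d$. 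For the energy piece, expand $\norm{\sum_i R_i}_a^2=\sum_{i,i'}a(R_i,R_{i'})$, discard all pairs whose oversampled regions are disjoint, and apply Cauchy--Schwarz with Young's inequality to the $O((\ell+1)^d)$ surviving neighbors per index. Both pieces produce $(\ell+1)^d$ with no $\Lambda$-dependence.

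The main obstacle, and the only genuinely delicate point, is that $\psi_j^{(i)}$ has global support, so $R_i$ is \emph{not} compactly supported in $K_{i,\ell}$ and the discarding steps above are not exact. I would remedy this by localizing each $R_i$ with a cutoff $\chi^{(i)}$ equal to one on $K_{i,\ell-1}$, vanishing outside $K_{i,\ell}$, and satisfying $H\abs{\nabla\chi^{(i)}}=O(1)$, and by controlling the tail $(1-\chi^{(i)})R_i$ via the exponential decay recorded in Lemma~\ref{lm:basiserror}. Multiplying by the cutoff produces commutator contributions in the stress $\sigma(\chi^{(i)}R_i)$ carrying the factor $\abs{\nabla\chi^{(i)}}^2$ through the Lam\'e coefficient $\lambda+2\mu$, i.e.\ weighted exactly by the spectral density $\tilde\sigma$ entering $s^1$; these are therefore dominated by $s^1(R_i,R_i)$-type quantities. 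Converting such $s^1$-quantities back into $\norm{R_i}_{\mathcal{A}}^2$ uses the spectral gap $a(v,v)\ge \Lambda\, s^1(v,v)$ for $v$ in the kernel $\tilde V$ of $\pi_1$ (cf.\ \eqref{eq:eig1}), and it is precisely this conversion that generates the factor $(1+\Lambda^{-1})$. Tracking these cutoff--gradient terms simultaneously with the bounded-overlap bookkeeping, so that no residual exponential or $\Ne$-dependent factor survives, is the crux; the remaining manipulations follow those in the proof of \cite[Thm.~2]{chung2017constraint} line by line.
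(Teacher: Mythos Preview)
The paper does not prove this lemma; it is quoted from \cite{chung2017constraint} together with Lemmas~\ref{lm:projectionexist}, \ref{lm:basiserror} and \ref{lm:onlinebasiserror}, so there is no in-paper argument to compare to directly. Your sketch has the right ingredients (the combined form $\mathcal{A}$, cutoffs, bounded overlap, the spectral gap producing $1+\Lambda^{-1}$), but the strategy of localizing $R_i$ itself and absorbing the tail $(1-\chi^{(i)})R_i$ by the exponential decay of Lemma~\ref{lm:basiserror} does not close as written: those tails have global support, so summing them reintroduces an $\Ne$-factor, and invoking the decay would inject a $C_e$-factor, neither of which is present in the stated bound.

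The argument in \cite{chung2017constraint} applies the cutoff to the \emph{test} function, not to $R_i$. The key observation you are missing is that $\mathcal{A}(R_i,\cdot)$ is intrinsically local: from \eqref{eq:minglo1eqn} one has $\mathcal{A}(\sum_j d_j^{(i)}\psi_j^{(i)},v)=s^1(\sum_j d_j^{(i)}v_j^i,\pi_1 v)$, which only sees $v|_{K_i}$, while $\mathcal{A}(\sum_j d_j^{(i)}\psi_{j,\ms}^{(i)},v)$ only sees $v|_{K_{i,\ell}}$ by support. Hence with $w=\sum_i R_i$ and a cutoff $\eta_i$ equal to $1$ on $K_{i,\ell}$ and vanishing outside $K_{i,\ell+1}$, one has the exact identity $\mathcal{A}(R_i,w)=\mathcal{A}(R_i,\eta_i w)$. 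Then Cauchy--Schwarz, the cutoff estimate $\norm{\eta_i w}_{\mathcal{A}}^2\lesssim(1+\Lambda^{-1})\norm{w}_{\mathcal{A}(K_{i,\ell+1})}^2$ via the local spectral gap, and the bounded overlap $\sum_i\norm{w}_{\mathcal{A}(K_{i,\ell+1})}^2\lesssim(\ell+1)^d\norm{w}_{\mathcal{A}}^2$ give the result with no exponential or $\Ne$ residue. This is the step you flagged as ``the crux'' without resolving; once you replace the splitting of $R_i$ by this variational localization of $\mathcal{A}(R_i,\cdot)$, the rest of your outline is correct.
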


\begin{lemma} 
\label{lm:onlinebasiserror}
We consider the oversampled domain $\omega_{i}^{+}$ obtained from $\omega_{i}$ by extending  $\ell$ coarse grid layers with $\ell \geq 2$ .  Let  $\delta_{\text{ms}}^{(i)}$  be the online multiscale basis functions obtained in \eqref{eqn:loc_online_basis} and let $\delta_{\text{glo}}^{(i)}$ be the global multiscale basis functions obtained in \eqref{eqn:glo_online_basis}. Then we have 

$$
\norm{\delta_{\text{glo}}^{(i)}-\delta_{\text{ms}}^{(i)}}_{a}^{2}+\norm{\pi_1(\delta_{\text{glo}}^{(i)}-\delta_{\text{ms}}^{(i)})}_{s^1}^{2} \leq 
C_e\left(\norm{\delta_{\text{glo}}^{(i)}}_{a}^{2}+\norm{\pi_1(\delta_{\text{glo}}^{(i)})}_{s^1}^{2}\right)
$$
where $C_e$ is the exponential decay factor defined in \eqref{eqn:exp_fact}
Furthermore, we have
$$
\norm{\sum_{i=1}^{\Nv}(\delta_{\text{glo}}^{(i)}-\delta_{\text{ms}}^{(i)})}_{a}^{2}+\norm{\sum_{i=1}^{\Nv}\pi_1(\delta_{\text{glo}}^{(i)}-\delta_{\text{ms}}^{(i)})}_{s^1}^{2} \leq (1+\Lambda^{-1})(\ell+1)^d\sum_{i=1}^{\Nv}\left(\norm{\delta_{\text{glo}}^{(i)}-\delta_{\text{ms}}^{(i)}}_{a}^{2}+\norm{\pi_1(\delta_{\text{glo}}^{(i)}-\delta_{\text{ms}}^{(i)})}_{s^1}^{2}\right).
$$
Similarly, let $\rho_{\ms}^{(i)}$'s  be the online multiscale basis functions obtained in \eqref{eqn:loc_online_basis} and $\rho_{\text{glo}}^{(i)}$'s be the global multiscale basis functions obtained in \eqref{eqn:glo_online_basis}. Then we have 
$$
\norm{\rho_{\text{glo}}^{(i)}-\rho_{\text{ms}}^{(i)}}_{b}^{2}+\norm{\pi_2(\rho_{\text{glo}}^{(i)}-\rho_{\text{ms}}^{(i)})}_{s^2}^{2} \leq 
C_e\left(\norm{\rho_{\text{glo}}^{(i)}}_{b}^{2}+\norm{\pi_2(\rho_{\text{glo}}^{(i)})}_{s^2}^{2}\right). 
$$
Furthermore, we have
$$
\norm{\sum_{i=1}^{\Nv}(\rho_{\text{glo}}^{(i)}-\rho_{\text{ms}}^{(i)})}_{b}^{2}+\norm{\sum_{i=1}^{\Nv}\pi_2 (\rho_{\text{glo}}^{(i)}-\rho_{\text{ms}}^{(i)})}_{s^2}^{2}
\leq (1+\Lambda^{-1})(\ell+1)^d\sum_{i=1}^{\Nv}\left(\norm{\rho_{\text{glo}}^{(i)}-\rho_{\text{ms}}^{(i)}}_{b}^{2}+\norm{\pi_2(\rho_{\text{glo}}^{(i)}-\rho_{\text{ms}}^{(i)})}_{s^2}^{2}\right).
$$
\end{lemma}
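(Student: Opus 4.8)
The plan is to recognize that Lemma~\ref{lm:onlinebasiserror} is the online counterpart of the offline estimates in Lemmas~\ref{lm:basiserror} and~\ref{lm:basiserror_sum}, and that the two defining systems \eqref{eqn:loc_online_basis} and \eqref{eqn:glo_online_basis} share the very same left-hand side as the offline problems \eqref{eq:mineq1_loc} and \eqref{eq:minglo1eqn}. Writing $\tilde a(w,v):=a(w,v)+s^1(\pi_1(w),\pi_1(v))$ for the constraint bilinear form, both $\delta_\glo^{(i)}$ and $\delta_\ms^{(i)}$ solve $\tilde a(\delta,v)=r_{n,i}^1(v)$, the former over $V$ and the latter over $V(\omega_i^+)$. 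The single structural input that is new, relative to the offline proofs, is the locality of the right-hand side: because $r_{n,i}^1(v)=r_n^1(\chi_{\omega_i}^1 v)$ by \eqref{eqn:loc_residual} and $\chi_{\omega_i}^1$ is supported in $\omega_i$, the functional $r_{n,i}^1$ annihilates every $v$ supported in $\Omega\setminus\omega_i$. Hence $\tilde a(\delta_\glo^{(i)},v)=0$ for all such $v$, which is exactly the homogeneity hypothesis that drives the CEM decay machinery of \cite{chung2017constraint}.

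First I would record that $\tilde a$ is coercive and continuous with respect to the combined energy $\|\cdot\|_a^2+\|\pi_1(\cdot)\|_{s^1}^2$; this yields well-posedness of both cell problems and identifies the norm in which the localization error is to be measured. Since $\delta_\ms^{(i)}$ and $\delta_\glo^{(i)}$ satisfy the same equation when tested against $V(\omega_i^+)$, their difference $w_i:=\delta_\glo^{(i)}-\delta_\ms^{(i)}$ is $\tilde a$-orthogonal to $V(\omega_i^+)$, so $w_i$ is controlled by the portion of $\delta_\glo^{(i)}$ living outside the oversampled region $\omega_i^+$.

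The core of the proof is therefore the exponential decay of $\delta_\glo^{(i)}$ away from $\omega_i$. Following \cite{chung2017constraint}, I would choose cutoff functions $\eta_j$ adapted to the coarse layers with $\eta_j\equiv 1$ on $\Omega\setminus\omega_{i,j}$ and $\eta_j\equiv 0$ on $\omega_{i,j-1}$, and test the global equation with (a corrected version of) $\eta_j^2\,\delta_\glo^{(i)}$. Because $r_{n,i}^1$ vanishes on the support of $\eta_j$, this produces a Caccioppoli-type recursion bounding the combined energy of $\delta_\glo^{(i)}$ on $\Omega\setminus\omega_{i,j}$ by a fixed fraction of its energy on the annulus $\omega_{i,j}\setminus\omega_{i,j-1}$. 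Absorbing and iterating across the $\ell$ layers yields the decay factor $C_e$ of \eqref{eqn:exp_fact} and delivers the first displayed inequality once $w_i$ is estimated by the tail energy of $\delta_\glo^{(i)}$.

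The main obstacle is that the auxiliary projection does not commute with multiplication by the cutoff, so $\eta_j^2\,\delta_\glo^{(i)}$ is not directly admissible: the constraint term $s^1(\pi_1(\eta_j^2\delta_\glo^{(i)}),\cdot)$ cannot be controlled by the localized energy. I would resolve this exactly as in \cite{chung2017constraint}: invoke Lemma~\ref{lm:projectionexist} to build a corrector $\xi\in V$, supported in the active annulus, with $\pi_1(\xi)=\pi_1(\eta_j^2\delta_\glo^{(i)})$ and $\|\xi\|_a^2\le D_0\|\pi_1(\eta_j^2\delta_\glo^{(i)})\|_{s^1}^2$; testing with $\eta_j^2\delta_\glo^{(i)}-\xi$ then restores an admissible, energy-controlled test function and closes the recursion. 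For the summed estimate I would invoke the finite-overlap property of the enlarged neighborhoods $\{\omega_i^+\}$ — each point meets at most $O((\ell+1)^d)$ of them — which is the same mechanism underlying Lemma~\ref{lm:basiserror_sum}; combined with the equivalence between the $a$-energy and the $s^1$-seminorm of the auxiliary projection modulated by the spectral gap $\Lambda$ of \eqref{eq:eig1}, this yields the factor $(1+\Lambda^{-1})(\ell+1)^d$. The pressure statements for $\rho_\glo^{(i)}$ and $\rho_\ms^{(i)}$ follow verbatim, replacing $a,s^1,\pi_1,V(\omega_i^+),D_0$ by $b,s^2,\pi_2,Q(\omega_i^+),D_1$ and using the second half of Lemma~\ref{lm:projectionexist}.
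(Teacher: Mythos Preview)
Your proposal is correct and matches the paper's approach: the paper does not give an explicit proof of Lemma~\ref{lm:onlinebasiserror} but introduces it, together with Lemmas~\ref{lm:projectionexist}--\ref{lm:basiserror_sum}, as results whose proofs follow \cite{chung2017constraint} and are omitted. Your sketch is precisely that CEM decay argument, and you correctly isolate the one new ingredient relative to the offline case---that the local residual $r_{n,i}^1(v)=r_n^1(\chi_{\omega_i}^1 v)$ vanishes for test functions supported away from $\omega_i$, which supplies the homogeneity needed to launch the Caccioppoli iteration.
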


Next, we consider a pair of Riesz projection operators $\mathfrak{R}_{a, \text{ms}}^{k+1}: V\to V_{\text{ms}}^{(k+1)}$ and $\mathfrak{R}_{b, \ms}^{k+1}: Q \to Q_{\text{ms}}^{(k+1)}$ such that for all $(v,q)\in V\times Q$ 
\begin{equation}
\begin{aligned}
a(v-\mathfrak{R}_{a,{\text{ms}}}^{k+1}(v),v_H)&=0, \\
b(q-\mathfrak{R}_{b,{\text{ms}}}^{k+1}(q),q_H)&=0,
\end{aligned}
\end{equation}
for any $v_H\in V_{\text{ms}}^{(k+1)}$ and $q_H \in Q_{\ms}^{(k+1)}$. 
The projection operators $\mathfrak{R}_{a,\ms}^{k+1}$ and $\mathfrak{R}_{b,\ms}^{k+1}$ have the following properties. 
\begin{lemma}\label{lm:infcontrl}
The following holds for all $(v,q)\in V\times Q$, 
\begin{equation}\label{eq:project_a}
\norm{v-\mathfrak{R}_{a,{\text{ms}}}^{k+1}(v)}_a\leq \inf_{v_H\in V_{\text{ms}}^{(k+1)}} \norm{v-v_H}_a,
\end{equation}
\begin{equation}\label{eq:project_b}
\norm{q-\mathfrak{R}_{b,{\text{ms}}}^{k+1}(q)}_b\leq \inf_{q_H\in  Q_{\text{ms}}^{(k+1)}} \norm{q-q_H}_b.
\end{equation}
\end{lemma}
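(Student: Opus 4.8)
The plan is to observe that $a(\cdot,\cdot)$ is a symmetric, coercive bilinear form on $V$ (symmetric because $\sigma(u):\varepsilon(v)$ is symmetric, coercive by the Korn estimate recorded in Section \ref{sec:modelpb}), so it induces an inner product whose associated norm is exactly $\norm{\cdot}_a$. Consequently $\mathfrak{R}_{a,\ms}^{k+1}$ is nothing but the $a$-orthogonal projection onto the subspace $V_{\ms}^{(k+1)}$, and \eqref{eq:project_a} is the classical best-approximation (C\'ea) property of such projections. The identical reasoning applies to $b(\cdot,\cdot)$ and $\norm{\cdot}_b$ to yield \eqref{eq:project_b}.

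Concretely, for \eqref{eq:project_a} I would fix an arbitrary $v_H \in V_{\ms}^{(k+1)}$ and write the splitting
$$v - v_H = \big(v - \mathfrak{R}_{a,\ms}^{k+1}(v)\big) + \big(\mathfrak{R}_{a,\ms}^{k+1}(v) - v_H\big).$$
Since $V_{\ms}^{(k+1)}$ is a linear space and both $\mathfrak{R}_{a,\ms}^{k+1}(v)$ and $v_H$ belong to it, the second summand lies in $V_{\ms}^{(k+1)}$. The defining relation of the Riesz projection then gives $a\big(v - \mathfrak{R}_{a,\ms}^{k+1}(v),\, \mathfrak{R}_{a,\ms}^{k+1}(v) - v_H\big) = 0$, so the two summands are $a$-orthogonal. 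Expanding $a(v-v_H,v-v_H)$ and using this orthogonality yields the Pythagorean identity
$$\norm{v - v_H}_a^2 = \norm{v - \mathfrak{R}_{a,\ms}^{k+1}(v)}_a^2 + \norm{\mathfrak{R}_{a,\ms}^{k+1}(v) - v_H}_a^2 \geq \norm{v - \mathfrak{R}_{a,\ms}^{k+1}(v)}_a^2.$$
Taking the infimum over all $v_H \in V_{\ms}^{(k+1)}$ on the left-hand side then delivers \eqref{eq:project_a}.

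For \eqref{eq:project_b} I would repeat the identical argument with $b$ in place of $a$, $\norm{\cdot}_b$ in place of $\norm{\cdot}_a$, and $Q_{\ms}^{(k+1)}$ in place of $V_{\ms}^{(k+1)}$, invoking the second defining relation of the Riesz projection. There is essentially no substantive obstacle: the only ingredients are the symmetry and coercivity of the two forms (already in hand) together with the linearity of the multiscale subspaces, which holds by construction. The one point worth stating explicitly is that the infimum is in fact attained, precisely at $v_H = \mathfrak{R}_{a,\ms}^{k+1}(v)$, which is immediate from the Pythagorean identity above.
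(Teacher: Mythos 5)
Your proof is correct and is exactly the standard argument the paper has in mind: the paper states this lemma without proof, treating it as the classical best-approximation property of the $a$- (resp.\ $b$-) orthogonal projection, and your Galerkin-orthogonality/Pythagorean derivation supplies precisely those omitted details. The only ingredients you use — symmetry and coercivity of the forms and the defining orthogonality relation of $\mathfrak{R}_{a,\ms}^{k+1}$ and $\mathfrak{R}_{b,\ms}^{k+1}$ — are all established in the paper, so there is no gap.
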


Now, we are ready to prove Theorem \ref{thm:conv}.
\begin{proof}[Proof of Theorem \ref{thm:conv}]
We denote $(u^{n-1}_\ms,p_{\ms}^{n-1})$ as the multiscale solution at the time step  $t_{n-1}=(n-1)\tau$. We perform the online adaptive procedure at the time step $t_n=n\tau$. Let $\{(u_{\mathrm{ms}}^{n,k},p_{\text{ms}}^{n,k}) \in V_{\mathrm{ms}}^{(k)}\times Q_{\mathrm{ms}}^{(k)}\}$ be the sequence of multiscale solutions obtained by our online adaptive enrichment algorithm.  
In the following, we simply denote $V_{\ms} = V_{\ms}^{(k)}$ and $Q_{\ms} = Q_{\ms}^{(k)}$ for the current online enrichment level $k$. We also write $V_{\ms}^{(k+1)} = V_{\ms}^{(k)} \bigoplus \spa \{ \delta_{\ms}^{(i)} : i \in \mathcal{I}_1 \}$ and $Q_{\ms}^{(k+1)} = Q_{\ms}^{(k)} \bigoplus \spa \{ \rho_{\ms}^{(i)} : i \in \mathcal{I}_2 \}$. 
For the online procedure, in the equation \eqref{eq:weak2}, we denote $D_\tau u_{\text{ms}}^{n,k}=(u_{\mathrm{ms}}^{n,k}-u^{n-1}_\ms)/\tau $ (resp. $D_\tau p_{\text{ms}}^{n,k}=(p_{\mathrm{ms}}^{n,k}-p^{n-1}_\ms)/\tau $). We divide the proof into four steps. 

\noindent\textbf{Step 1.}
Recall the orthogonal decompositions
$V=V_{\text{ms}}\bigoplus \widetilde{V}$ with respect to $a(\cdot,\cdot)$ and $Q=Q_{\text{ms}}\bigoplus \widetilde{Q}$ with respect to $b(\cdot,\cdot)$. The proof starts with a representation of the error term $u^n-u^{n,k}_{\text{ms}}$. Summing the first equation in \eqref{eqn:glo_online_basis} over the indices $i \in \{ 1, \cdots, \Nv \}$ and denoting $\delta_{\glo} := \sum_{i=1}^{\Nv} \delta_{\glo}^{(i)}$, we obtain 
$$
\begin{aligned}
a(\delta_{\glo}, v) + s^1\left (\pi_1 (\delta_{\glo}), \pi_1(v) \right )= \sum_{i=1}^{\Nv}r_{n,i}^{1,k} (v)
=a(u^n-u^{n,k}_{\text{ms}} ,v)-d(v,p^n-p^{n,k}_{\text{ms}})
\end{aligned}$$
for any $v \in V$. 
Then, we have 
$$
\begin{aligned}
a (u^n-u^{n,k}_{\text{ms}}-\delta_{\glo}, v) -d(v,p^n-p^{n,k}_{\text{ms}} )= s^1\left (\pi_1 (\delta_{\glo}), \pi_1(v) \right )
\end{aligned}$$
for any $v \in V$. Let $\widetilde{u}^d$ be the solution of the following equation
\begin{equation}
a(\widetilde{u}^d,v)=d(v,p^n-p^{n,k}_{\text{ms}} ) \quad \tforall v\in V.
\end{equation}
This gives us 
\begin{equation}\label{eq:orthogonality_u}
\begin{aligned}
a (u^n-u^{n,k}_{\text{ms}}-\delta_{\glo}-\widetilde{u}^d, v) = s^1\left (\pi_1 (\delta_{\glo}), \pi_1(v) \right ) \quad \tforall v \in V
\end{aligned}\end{equation}
and thus 
$$
 a (u^n-u^{n,k}_{\text{ms}}-\delta_{\glo}-\widetilde{u}^d, v) = 0 \quad \text{ for all } v \in \widetilde{V}. 
$$
Using the decomposition $V=V_{\text{ms}}\bigoplus \widetilde{V}$, we have
$u^n-u^{n,k}_{\text{ms}}-\delta_{\glo}-\widetilde{u}^d\in \widetilde{V}^\perp=V_{\ms}$ and there exists a set of coefficients $\left \{ d_j^{(i)} \right \}_{1 \leq i \leq \Ne, 1 \leq j \leq J_i^1}$ such that 
\begin{equation}\label{eq:newrepres_u}
u^n-u^{n,k}_{\text{ms}}-\delta_{\glo}-\widetilde{u}^d=\sum_{i=1}^{\Ne} \sum_{j=1}^{J_i^1} d_j^{(i)}\psi_{j}^{(i)}. 
\end{equation}
On the other hand, 
summing the second equation in \eqref{eqn:glo_online_basis} over the indices $i \in \{ 1, \cdots, \Nv \}$ and denoting $\rho_{\glo} := \sum_{i=1}^{\Nv} \rho_{\glo}^{(i)}$, we obtain 
$$\begin{aligned}
b(\rho_{\glo} , q) + s^2 \left ( \pi_2 ( \rho_{\glo} ), \pi_2 (q) \right ) &= \sum_{i=1}^{\Nv}r_{n,i}^{2,k}(q)=(f^n,q)-b(p_{\text{ms}}^{n,k},q)-c(D_\tau p_{\text{ms}}^{n,k},q)-d(D_\tau u^{n,k}_{\text{ms}} ,q) \\
&= d(D_\tau u^n-D_\tau u^{n,k}_{\text{ms}} ,q)+ b(p^n-p_{\text{ms}}^{n,k},q)+c(D_\tau p^n-D_\tau p_{\text{ms}}^{n,k},q) \\
\end{aligned}$$
for any $q \in Q$. This leads to
$$
 b(p^n-p_{\text{ms}}^{n,k}-\rho_{\glo},q)+d(D_\tau u^n-D_\tau u^{n}_{\text{ms}} ,q)+c(D_\tau p^n-D_\tau p_{\text{ms}}^{n,k},q)= s^2 \left ( \pi_2 ( \rho_{\glo} ), \pi_2 (q) \right )
$$
for any $q \in Q$. 
Let $\widetilde{p}^c$ and  $ \widetilde{p}^{d}$ be the solutions of the following equations: 
\begin{equation}
\label{p^c}
b(\widetilde{p}^c,q)=-c(D_\tau p^n-D_\tau p_{\text{ms}}^{n,k},q)
\end{equation}
and 
\begin{equation}
\label{p^d}
b(\widetilde{p}^d,q)=-d(D_\tau u^n-D_\tau u^{n,k}_{\text{ms}},q). 
\end{equation}
Then, we have 
\begin{equation}\label{eq:orthogonality_q}
 b(p^n-p_{\text{ms}}^{n,k}-\widetilde{p}^c-\widetilde{p}^d-\rho_{\glo} ,q)
 = s^2 \left ( \pi_2 (\rho_{\glo}), \pi_2 (q) \right )
\end{equation}
for any $q \in Q$. 
Similar to the case of the displacement variable, using the decomposition $Q=Q_{\text{ms}}\bigoplus \widetilde{Q}$, we have
$p^n-p_{\text{ms}}^{n,k}-\widetilde{p}^c-\widetilde{p}^d-\rho_{\glo} \in \widetilde{Q}^\perp=Q_{\ms}$ and there exists a set of coefficients $\left \{ c_j^{(i)} \right \}_{1 \leq i \leq \Ne, 1 \leq j \leq J_i^1}$ such that 
\begin{equation}\label{eq:newrepres}
p^n-p_{\text{ms}}^{n,k}-\widetilde{p}^c-\widetilde{p}^d-\rho_{\glo}=\sum_{i=1}^{\Ne} \sum_{j=1}^{J_i^2} c_j^{(i)}\phi_{j}^{(i)}. 
\end{equation}

\noindent\textbf{Step 2.} 
We localize each $\phi_j^{(i)}$ in \eqref{eq:newrepres}, $\psi_{j}^{(i)}$ in \eqref{eq:newrepres_u} and estimate the error. In particular,  we estimate the localized terms as follows: 
$$ \mathcal{E}_{\text{loc}}^p:= \norm{\sum_{i=1}^{\Ne} \sum_{j=1}^{J_{i}^2} c_{j}^{(i)}(\phi_{j}^{(i)}-\phi_{j, \text{ms}}^{(i)})}_{b}^{2}+\norm{\sum_{i=1}^{\Ne} 
\sum_{j=1}^{J_{i}^2} c_{j}^{(i)}\pi_2(\phi_{j}^{(i)}-\phi_{j, \text{ms}}^{(i)})}_{s^2}^{2}$$
and 
$$\mathcal{E}_{\text{loc}}^u:= \norm{\sum_{i=1}^{\Ne} \sum_{j=1}^{J_{i}^1} d_{j}^{(i)}(\psi_{j}^{(i)}-\psi_{j, \text{ms}}^{(i)})}_{a}^{2}+\norm{\sum_{i=1}^{\Ne} 
\sum_{j=1}^{J_{i}^1} d_{j}^{(i)}\pi_1(\psi_{j}^{(i)}-\psi_{j, \text{ms}}^{(i)})}_{s^1}^{2}.$$
Define $\xi_p:= p^n-p_{\text{ms}}^{n,k}-\widetilde{p}^c-\widetilde{p}^d-\rho_{\glo}$ and $q_{\text{aux}}^{(i)}:=\sum_{j=1}^{J_i^2}c_j^{(i)}q_j^i\in Q_{\text{aux}} (K_i)$. Using  \eqref{eq:newrepres} and the linearities of $b(\cdot, \cdot)$ and $s^2(\cdot, \cdot)$, we have
\begin{equation}\label{eq:q_j^i}
\begin{aligned}
b(\xi_p,q)+s^2(\pi_2(\xi_p),\pi_2(q))&= \sum_{i=1}^{\Ne} \sum_{j=1}^{J_i^2} c_j^{(i)} \left [ b(\phi_{j}^{(i)},q)+s^2(\pi_2(\phi_{j}^{(i)}),\pi_2(q)) \right ] =\sum_{i=1}^{\Ne}  s^2(q_{\aux}^{(i)}, \pi_2(q))
\end{aligned}
\end{equation}
for any $q \in Q$, 
where the last equality follows from \eqref{eq:minglo2eqn}. For each $q_{\aux}^{(i)} \in Q_{\aux}(K_i)$, by Lemma \ref{lm:projectionexist}, there exists a function $\tilde{q}^{(i)}\in Q(K_i)$ such that $\pi_2(\tilde{q}^{(i)})=q_{\text{aux}}^{(i)}$ and 
$$
\norm{\tilde{q}^{(i)}}_{b(K_i)}^2\leq D_1\norm{q_{\text{aux}}^{(i)}}_{s^2(K_i)}^2.
$$
Taking $q=\tilde{q}^{(i)}$ in \eqref{eq:q_j^i} and using Cauchy-Schwarz inequality, we have 
$$
\begin{aligned}
\norm{q_{\text{aux}}^{(i)}}_{s^2(K_i)}^2&=b(\xi_p,\tilde{q}^{(i)})+s^2(\pi_2(\xi_p),\pi_2(\tilde{q}^{(i)}))\\
&\leq (\norm{\xi_p}_{b(K_i)}^2+\norm{\pi_2(\xi_p)}_{s^2(K_i)}^2)^{\frac{1}{2}}(\norm{\tilde{q}^{(i)}}_{b(K_i)}^2+\norm{\pi_2(\tilde{q}^{(i)})}_{s^2(K_i)}^2)^{\frac{1}{2}}\\
&\leq (\norm{\xi_p}_{b(K_i)}^2+\norm{\pi_2(\xi_p)}_{s^2(K_i)}^2)^{\frac{1}{2}}(1+D_1)^{\frac{1}{2}} \norm{q_{\text{aux}}^{(i)}}_{s^2(K_i)}. 
\end{aligned}
$$
Applying the orthogonality of the eigenfunctions $q_j^{(i)}$'s and the normalization condition (i.e., $s^2_i(q_j^{(i)},q_j^{(i)})=1$), we have
$$\begin{aligned}
\sum_{i=1}^{\Ne}\sum_{j=1}^{J_i^2} \left ( c_j^{(i)} \right )^2
&=\sum_{i=1}^{\Ne}\norm{q_{\text{aux}}^{(i)}}_{s^2(K_i)}^2 \leq (1+D_1)(\|\xi_p\|_{b}^2+\|\pi_2(\xi_p)\|_{s^2}^2)\leq (1+D_1)(1+C_1)\norm{\xi_p}_b^2, 
\end{aligned}
$$
where the last inequality follows from the definition of the constant $C_1$. 
We denote $\mathcal{C}_1 := (1+D_1)(1+C_1)$. 
Recalling the definition of $\xi_p$, we have 
$$\begin{aligned}
\sum_{i=1}^{\Ne}\sum_{j=1}^{J_i^2}\left ( c_j^{(i)} \right )^2
&\leq \mathcal{C}_1\norm{p^n-p_{\text{ms}}^{n,k}-\widetilde{p}^c-\widetilde{p}^d-\rho_{\glo}}_b^2
\leq  2\mathcal{C}_1 \left [ \norm{p^n-p_{\text{ms}}^{n,k}-\widetilde{p}^c-\widetilde{p}^d}_b^2+
\norm{\rho_{\glo}}_b^2 \right ]\\
&\leq 4\mathcal{C}_1\norm{p^n-p_{\text{ms}}^{n,k}+\widetilde{p}^c+\widetilde{p}^d}_b^2\\
&\lesssim \mathcal{C}_1 \left [\norm{p^n-p_{\text{ms}}^{n,k}}^2_b+\norm{D_\tau p^n-D_\tau p_{\text{ms}}^{n,k}}_c^2+\norm{D_\tau u^n-D_\tau u^{n,k}_{\text{ms}}}_a^2 \right ],
\end{aligned}
$$
where the penultimate inequality follows from \eqref{eq:orthogonality_q}. 
Consequently, following from Lemmas \ref{lm:basiserror} and \ref{lm:basiserror_sum} as well as the fact that $\norm{\phi_j^{(i)}}_b^2+\norm{\pi_2(\phi_j^{(i)})}^2_{s^2}\leq \norm{q_j^i}^2_{s^2}=1$, we have 
\begin{equation}
\begin{aligned}
\mathcal{E}_{\text{loc}}^p &\lesssim \mathcal{C}_1 C_e (\ell+1)^d (1+\Lambda^{-1})  \left [ \| p^n-p_{\text{ms}}^{n,k}\|^2_b+\|D_\tau p^n-D_\tau p_{\text{ms}}^{n,k}\|_c^2+\|D_\tau u^n-D_\tau u^{n,k}_{\text{ms}}\|_a^2 \right].
\end{aligned}\end{equation}
On the other hand, 
employing the same technique, one can obtain the estimate for $\mathcal{E}_{\text{loc}}^u$. 
We only sketch the crucial steps of the analysis. Denote 
$\xi_u:= u^n-u^{n,k}_{\text{ms}}-\delta_{\glo}-\widetilde{u}^d$. 
One can show that 
$$\begin{aligned}
\sum_{i=1}^{\Ne}\sum_{j=1}^{J_i^1} \left (d_j^{(i)} \right )^2 & \leq 
\mathcal{C}_0 \norm{\xi_u}_a^2,
 \end{aligned}
$$
where $\mathcal{C}_0 := (1+D_0)(1+C_0)$. 
Recalling the definitions of $\xi_u$ and $\widetilde{u}^d$, we obtain 
$$\begin{aligned}
\sum_{i=1}^{\Ne}\sum_{j=1}^{J_i^1} \left ( d_j^{(i)} \right )^2
&\lesssim \mathcal{C}_0 \left [\| u^n-u^{n,k}_{\text{ms}}\|_a^2+\|p^n-p^{n,k}_{\text{ms}}\|_b^2 \right ].
\end{aligned}
$$
As a result, we have 
\begin{equation}
\begin{aligned}
\mathcal{E}_{\text{loc}}^u &\lesssim \mathcal{C}_0 C_e (\ell+1)^d (1+\Lambda^{-1}) \left [ \norm{u^n-u^{n,k}_{\text{ms}}}_a^2+\norm{p^n-p^{n,k}_{\text{ms}}}_b^2 \right ].
\end{aligned}\end{equation}

\noindent\textbf{Step 3.} 
Next, we derive an estimate for $\rho_{\glo}^{(i)}-\rho_{\ms}^{(i)}$ and $\delta_{\glo}^{(i)} - \delta_{\ms}^{(i)}$. By Lemma \ref{lm:onlinebasiserror}, we have 

$$\|\rho_{\text{glo}}^{(i)}-\rho_{\text{ms}}^{(i)}\|_{b}^{2}+\|\pi_2(\rho_{\text{glo}}^{(i)}-\rho_{\text{ms}}^{(i)})\|_{s^2}^{2} \leq 
C_e\left(\norm{\rho_{\text{glo}}^{(i)}}_{b}^{2}+\norm{\pi_2(\rho_{\text{glo}}^{(i)})}_{s^2}^{2}\right).$$
Taking $q = \rho_{\glo}^{(i)}$ in \eqref{eqn:glo_online_basis} and making use of the definition of the local residual operator, we obtain
$$
\begin{aligned}
&\quad \norm{\rho_{\text{glo}}^{(i)}}_{b}^{2}+\norm{\pi_2(\rho_{\text{glo}}^{(i)})}_{s^2}^{2}= 
(f^n, \chi_{\omega_i}^2  \rho_{\text{glo}}^{(i)}) - b(p_{\text{ms}}^{n,k},  \chi_{\omega_i}^2  \rho_{\text{glo}}^{(i)}) - c(D_\tau p_{\text{ms}}^{n,k}, \chi_{\omega_i}^2  \rho_{\text{glo}}^{(i)}) - d(D_\tau u_{\text{ms}}^{n,k}, \chi_{\omega_i}^2  \rho_{\text{glo}}^{(i)})\\
&=d(D_\tau u^n-D_\tau u^{n,k}_{\text{ms}} ,\chi_{\omega_i}^2  \rho_{\text{glo}}^{(i)})+ b(p^n-p_{\text{ms}}^{n,k},\chi_{\omega_i}^2  \rho_{\text{glo}}^{(i)})+c(D_\tau p^n-D_\tau p_{\text{ms}}^{n,k},\chi_{\omega_i}^2  \rho_{\text{glo}}^{(i)})\\
&\lesssim \norm{\chi_{\omega_i}^2  \rho_{\text{glo}}^{(i)}}_{b(\omega_i)}
\left (\beta_1\beta_2\|D_\tau u^n-D_\tau u^{n,k}_{\text{ms}}\|_{a(\omega_i)}+\|p^n-p_{\text{ms}}^{n,k}\|_{b(\omega_i)}+\beta_2\|D_\tau p^n-D_\tau p_{\text{ms}}^{n,k}\|_{c(\omega_i)} \right ) \\
&\lesssim \sqrt{2}\left (\norm{\rho_{\text{glo}}^{(i)}}_{b(\omega_i)}^{2}+\norm{\rho_{\text{glo}}^{(i)}}_{s^2(\omega_i)}^{2}\right)^{\frac{1}{2}}
\left (\beta_1\beta_2\norm{D_\tau u^n-D_\tau u^{n,k}_{\text{ms}}}_{a(\omega_i)}+\norm{p^n-p_{\text{ms}}^{n,k}}_{b(\omega_i)}+\beta_2\norm{D_\tau p^n-D_\tau p_{\text{ms}}^{n,k}}_{c(\omega_i)} \right ).
\end{aligned}$$
Since $\pi_2$ is an orthogonal projection onto the auxiliary space $Q_{\aux}$, we have
$$
\begin{aligned}
\norm{\rho_{\text{glo}}^{(i)}}_{s^2(\omega_i)}^{2}&\leq \norm{\pi_2(\rho_{\text{glo}}^{(i)})}_{s^2(\omega_i)}^{2}+\norm{(I-\pi_2)(\rho_{\text{glo}}^{(i)})}_{s^2(\omega_i)}^{2}\leq 
\norm{\pi_2(\rho_{\text{glo}}^{(i)})}_{s^2(\omega_i)}^{2}+\Lambda^{-1}\norm{\rho_{\text{glo}}^{(i)}}_{b(\omega_i)}^{2} \\
& \leq \left ( 1+ \Lambda^{-1} \right ) \left (\norm{\rho_{\text{glo}}^{(i)}}_{b(\omega_i)}^{2}+\norm{\pi(\rho_{\text{glo}}^{(i)})}_{s^2(\omega_i)}^{2}\right), 
\end{aligned}
$$
where the penultimate inequality follows from the fact that $(I-\pi_2)(\rho_{\text{glo}}^{(i)})$ is spanned by the eigenfunctions $q_j^i$'s with $j\geq J_i^2+1$ of the spectral problem \eqref{eq:eig2}. Therefore, we have
\begin{equation}
\begin{aligned}
&~~~\sum_{i=1}^{\Nv} \left [ \norm{\rho_{\text{glo}}^{(i)}-\rho_{\text{ms}}^{(i)}}_{b}^{2}+\norm{\pi_2(\rho_{\text{glo}}^{(i)}-\rho_{\text{ms}}^{(i)})}_{s^2}^{2} \right ]\\
&\leq 2C_e(1+\Lambda^{-1}) \sum_{i=1}^{\Nv} \left [ \beta \norm{D_\tau u^n-D_\tau u^{n,k}_{\text{ms}}}_{a(\omega_i)}+\norm{p^n-p_{\text{ms}}^{n,k}}_{b(\omega_i)}+\beta_2\norm{D_\tau p^n-D_\tau p_{\text{ms}}^{n,k}}_{c(\omega_i)} \right ] ^2\\
&\lesssim N_KC_e(1+\Lambda^{-1}) \left ( \beta^2 \norm{D_\tau u^n-D_\tau u^{n,k}_{\text{ms}}}_{a}^2
+\norm{p^n-p_{\text{ms}}^{n,k}}_{b}^2
+\beta_2^2\norm{D_\tau p^n-D_\tau p_{\text{ms}}^{n,k}}_{c}^2 \right ),
\end{aligned}
\end{equation}
where $\beta := \beta_1 \beta_2$. Without loss of generality, we assume that $\min \{ \beta_1, \beta_2 \} \geq 1$. 
We can also derive an estimate for $\delta_{\glo}^{(i)}-\delta_{\ms}^{(i)}.$ By Lemma \ref{lm:onlinebasiserror}, we have 
$$\norm{\delta_{\text{glo}}^{(i)}-\delta_{\text{ms}}^{(i)}}_{a}^{2}+\norm{\pi_1(\delta_{\text{glo}}^{(i)}-\delta_{\text{ms}}^{(i)})}_{s^1}^{2} \leq 
C_e\left(\norm{\delta_{\text{glo}}^{(i)}}_{a}^{2}+\norm{|\pi_1(\delta_{\text{glo}}^{(i)})}_{s^1}^{2}\right).$$
Taking $v = \delta_{\glo}^{(i)}$ in \eqref{eqn:loc_online_basis} and making use of the definition of the local residual operator, we obtain
$$
\begin{aligned}
\norm{\delta_{\text{glo}}^{(i)}}_{a}^{2}+\norm{\pi_1(\delta_{\text{glo}}^{(i)})}_{s^1}^{2}&
= a(u_{\text{ms}}^{n,k}, \chi_{\omega_i}^1 \delta_{\text{glo}}^{(i)}) -d(\chi_{\omega_i}^2 \delta_{\text{glo}}^{(i)},p_{\text{ms}}^{n,k}) \\
&=a(u_{\text{ms}}^{n,k}-u^n, \chi_{\omega_i}^2 \delta_{\text{glo}}^{(i)}) -d(\chi_{\omega_i}^2 \delta_{\text{glo}}^{(i)},p_{\text{ms}}^{n,k}-p^n)\\
&\leq \|u^n-u_{\text{ms}}^{n,k}\|_{a(\omega_i)}\|\chi_{\omega_i}^2\delta_{\text{glo}}^{(i)}\|_{a(\omega_i)}+\|p^n-p_{\text{ms}}^{n,k}\|_{b(\omega_i)}\|\chi_{\omega_i}^2\delta_{\text{glo}}^{(i)}\|_{a(\omega_i)}\\
&\leq 
\sqrt{2}\left (\norm{\delta_{\text{glo}}^{(i)}}_{a(\omega_i)}^{2}+\norm{\delta_{\text{glo}}^{(i)}}_{s^1(\omega_i)}^{2} \right )^{\frac{1}{2}}
\left (\norm{u^n-u_{\text{ms}}^{n,k}}_{a(\omega_i)}+\norm{p^n-p_{\text{ms}}^{n,k}}_{b(\omega_i)} \right ) .
\end{aligned}$$
Due to the fact that $\pi_1$ is an orthogonal projection onto the auxiliary space $V_{\aux}$, one can show that 
$$
\norm{\delta_{\text{glo}}^{(i)}}_{s^1(\omega_i)}^{2} \leq  \left ( 1+ \Lambda^{-1} \right )
\left ( \norm{\delta_{\text{glo}}^{(i)}}_{a(\omega_i)}^{2} + \norm{\delta_{\text{glo}}^{(i)})}_{s^1(\omega_i)}^{2}\right ).
$$
Therefore, we have
\begin{equation}
\sum_{i=1}^{\Nv} \left [ \norm{\delta_{\text{glo}}^{(i)}-\delta_{\text{ms}}^{(i)}}_{a}^{2}+\norm{\pi_1(\delta_{\text{glo}}^{(i)}-\delta_{\text{ms}}^{(i)})}_{s^1}^{2} \right ] \leq 4N_KC_e(1+\Lambda^{-1}) \left (\norm{u^n - u_{\text{ms}}^{n,k}}_{a}^2+\norm{p^n-p_{\text{ms}}^{n,k}}_{b} \right )^2.
\end{equation}

\noindent\textbf{Step 4.} 
We prove the desired convergence in this final step. 
As in the proof of a priori estimate of the Euler-Galerkin scheme in  \cite{ern2009posteriori}, we split the errors in the displacement and pressure into two parts each, namely
$$
\begin{aligned}
\rho_u^n&:= u^n-\mathfrak{R}_{a,{\text{ms}}}^{k+1}(u^n),\qquad&& \eta_u^n:=\mathfrak{R}_{a,{\text{ms}}}^{k+1}(u^n)-u_{\text{ms}}^{n,k+1},\\
\rho_p^n&:= p^n-\mathfrak{R}_{b,{\text{ms}}}^{k+1}(p^n),\qquad &&
\eta_p^n:=\mathfrak{R}_{b,{\text{ms}}}^{k+1}(p^n)-p_{\text{ms}}^{n,k+1}.\\
\end{aligned}
$$
Applying Lemma \ref{lm:infcontrl} to estimate the terms $\rho_u^n$ and $\rho_p^n$ yields
$$ \norm{u^n - u_{\ms}^{n,k+1}}_a^2 + \tau \norm{p^n - p_{\ms}^{n,k+1}}_b^2 \lesssim 
\mathcal{U}_{\inf} +\tau \mathcal{P}_{\inf} +\norm{\eta_u^n}_a^2+\tau\norm{\eta_p^n}_b^2,$$
where 
$$ \mathcal{U}_{\inf} := \inf_{v_H\in V_{\text{ms}}^{(k+1)}}\norm{u^n-v_H}_a^2 \tand \mathcal{P}_{\inf} := \inf_{p_H\in Q_{\text{ms}}^{(k+1)}}\norm{p^n-p_H}_b^2.$$
We go through the analysis in three stages. 
\begin{itemize}
\item[(i)] First, we bound the term $\mathcal{P}_{\inf}$.
Let $\tilde{p}^c_\ms\in Q_\ms$ and $\tilde{p}^d_\ms\in Q_\ms$ be the discrete elements  corresponding to $\tilde{p}^c$ and $\tilde{p}^d$ in Assumption \ref{conv_rate}, respectively.
Then, we take $$p_H=p_{\text{ms}}^{n,k}+\sum_{i\in\mathcal{I}_2}\rho_{\ms}^{(i)}+\sum_{i=1}^{\Ne} \sum_{j=1}^{J_i} c_j^{(i)}\phi_{j,\ms}^{(i)} +\tilde{p}^c_\ms+\tilde{p}^d_\ms \in Q_{\text{ms}}^{(k+1)}$$ and we have, using \eqref{eq:newrepres}, the following series of inequalities: 
$$
\begin{aligned}
\mathcal{P}_{\inf} &\leq 
\norm{p^n-p_{\text{ms}}^{n,k}-\sum_{i\in\mathcal{I}_2}\rho_{\ms}^{(i)}-\sum_{i=1}^{\Ne} \sum_{j=1}^{J_i^2} c_j^{(i)}\phi_{j,\ms}^{(i)}-\tilde{p}^c_\ms-\tilde{p}^d_\ms }_b^2\\
&= \norm{\sum_{i \in \mathcal{I}_2}(\rho_\glo^{(i)}-\rho_{\ms}^{(i)})+\sum_{i\not \in \mathcal{I}_2} \rho_\glo^{(i)}+\sum_{i=1}^{\Ne} \sum_{j=1}^{J_i^2} c_j^{(i)}(\phi_{j}^{(i)}-\phi_{j,\ms}^{(i)})+(\tilde{p}^c-\tilde{p}^c_\ms)+(\tilde{p}^d-\tilde{p}^d_\ms)}_b^2\\
& \lesssim \mathcal{P}_{\inf}^{(1)} + \mathcal{P}_{\inf}^{(2)}, 
\end{aligned}
$$
where 
\begin{eqnarray*}
\begin{split}
\mathcal{P}_{\inf}^{(1)}  & := \norm{\sum_{i \in \mathcal{I}_2}(\rho_\glo^{(i)}-\rho_{\ms}^{(i)})}_b^2
+\norm{\sum_{i=1}^{\Ne} \sum_{j=1}^{J_i^2} c_j^{(i)}(\phi_{j}^{(i)}-\phi_{j,\ms}^{(i)})}_b^2+
\norm{\tilde{p}^c-\tilde{p}^c_\ms\|_b^2+\|\tilde{p}^d-\tilde{p}^d_\ms}_b^2, \\
\mathcal{P}_{\inf}^{(2)}  & := \norm{\sum_{i \not \in \mathcal{I}_2}  \rho_{\glo}^{(i)}}_b^2 .
\end{split}
\end{eqnarray*}
Using Steps 2 and 3 as well as Lemma \ref{lm:onlinebasiserror}, one can see that 
$$
\begin{aligned}
\mathcal{P}_{\inf}^{(1)}  &\leq C(D_1, N_K) C_e C_r(\ell+1)^d (1+\Lambda^{-1})\left [ 
\beta^2 \norm{D_\tau u^n-D_\tau u^{n,k}_{\text{ms}}}_a^2 
+\norm{p^n-p_{\text{ms}}^{n,k}}^2_b
+\beta_2^2 \norm{D_\tau p^n-D_\tau p_{\text{ms}}^{n,k}}_c^2
\right ],
\end{aligned}
$$
where the constant $C(D_1, N_K)$ here depends on $D_1$ and $N_K$.
Next, we estimate the term $\mathcal{P}_{\inf}^{(2)}$. We write $\displaystyle \rho^*:=\sum_{i\not \in \mathcal{I}_2} \rho_\glo^{(i)}$. 
Using the definition \eqref{eqn:glo_online_basis} of global online basis functions, we have 
\begin{equation}
b(\rho^*, q) + s^2\left (\pi_2(\rho^*), \pi_2(q) \right )  = \sum_{i \in \mathcal{I}_2} r_{n,i}^2 (q)
\label{eqn:rho_star}
\end{equation}
for any $q \in Q$. Note that 
$$ \norm{\chi_{\omega_i}^2 \rho^*}_b^2 \leq (1+\Lambda^{-1}) \left ( \norm{\rho^*}_b^2 + \norm{\pi_2(\rho^*)}_{s^2}^2 \right ).$$
Taking $q = \rho^*$ in \eqref{eqn:rho_star}, we have 
$$
\begin{aligned}
\norm{\rho^*}_{b}^2+\norm{\pi_2(\rho^*)}_{s^2}^2 & = \sum_{i\not \in \mathcal{I}_2} r_{n}^{2,k}(\chi_{\omega_i}^2\rho^*)\\
&\leq \sum_{i\not \in \mathcal{I}_2} \left (\sup_{q\in Q(\omega_i) }\frac{ r_{n}^{2,k}(q)}{\|q\|_b} \right )
\norm{\chi_{\omega_i}^2 \rho^*}_b = \sum_{i \in \mathcal{I}_2} \eta_{n,i}^{2,k}
\norm{\chi_{\omega_i}^2 \rho^*}_b \\
&\leq \sqrt{2} (1+\Lambda^{-1})^{\frac{1}{2}} (\norm{\rho^*}_{b}^2+\norm{\pi_2(\rho^*)}_{s^2}^2)^{\frac{1}{2}} \sum_{i\not \in \mathcal{I}_2}\eta_{n,i}^{2,k} \\
&\leq \sqrt{2 N_K} (1+\Lambda^{-1})^{\frac{1}{2}} (\norm{\rho^*}_{b}^2+\norm{\pi_2(\rho^*)}_{s^2}^2)^{\frac{1}{2}} \left (  \sum_{i\not \in \mathcal{I}_2} \left (\eta_{n,i}^{2,k} \right )^2 \right )^{\frac{1}{2}}.
\end{aligned}$$
The last inequality follows from the fact that $\eta_{n,i}^{2,k}$ is defined over the coarse neighborhood $\omega_i$. As a result, 
we have 
$$
\mathcal{P}_{\inf}^{(2)} \leq 2N_K(1+\Lambda^{-1})  \sum_{i\not \in \mathcal{I}_2}\left ( \eta_{n,i}^{2,k} \right )^2 \leq 2N_K(1+\Lambda^{-1})\theta  \sum_{i=1}^{\Nv}\left ( \eta_{n,i}^{2,k} \right )^2, 
$$
where $\theta$ is the tolerance parameter in the online adaptive enrichment (see Section \ref{sec:algorithm}). 
It remains to analyze the term $\eta_{n,i}^{2,k}$. 
By definition, for any $q\in Q(\omega_i)$, we have 
$$\begin{aligned}
z_{n,i}^{2,k}(q)&=(f^n, q)-b(p_{\text{ms}}^{n,k}, q)-c(D_\tau p_{\text{ms}}^{n,k},q)-d(D_\tau u^{n,k}_{\text{ms}}, q)\\
&= d(D_\tau u^n-D_\tau u^{n}_{\text{ms}} , q)+ b(p^n-p_{\text{ms}}^{n,k}, \chi_{\omega_i}^2  q)+c(D_\tau p^n-D_\tau p_{\text{ms}}^{n,k}, q))\\
&\leq \left [ \beta \norm{D_\tau  u^n-D_\tau u^{n}_{\text{ms}}}_{a(\omega_i)}
+\norm{p^n-p_{\text{ms}}^{n,k}}_{b(\omega_i)} 
+\beta_2 \norm{D_\tau p^n-D_\tau p_{\text{ms}}^{n,k}}_{c(\omega_i)} \right ] \norm{q}_{b}.
\end{aligned}$$
Thus, we have 
$$\begin{aligned}
\sum_{i=1}^{\Nv} \left ( \eta_{n,i}^{2,k} \right )^2 &\lesssim \sum_{i=1}^{\Nv} \left [ 
\beta^2 \norm{D_\tau u^n-D_\tau u^{n,k}_{\text{ms}}}_{a(\omega_i)}^2
+\norm{p^n-p_{\text{ms}}^{n,k}}_{b(\omega_i)}^2
+\beta_2^2 \norm{D_\tau p^n-D_\tau p_{\text{ms}}^{n,k}}_{c(\omega_i)}^2 \right ] \\
&\leq N_K \left [ 
\beta^2\norm{D_\tau  u^n-D_\tau u^{n,k}_{\text{ms}}}_{a}^2
+\norm{p^n-p_{\text{ms}}^{n,k}}_{b}^2
+\beta_2^2 \norm{D_\tau p^n-D_\tau p_{\text{ms}}^{n,k}}_{c}^2 \right ]. 
\end{aligned}
$$
Overall, we have 
\begin{eqnarray}
\mathcal{P}_{\inf} \lesssim \left [ C(D_1, N_K) C_e C_r(\ell+1)^d + N_K^2 \theta \right ] (1+ \Lambda^{-1}) \mathcal{E}_p^k.
\label{eqn:conclu_1}
\end{eqnarray}

\item [(ii)] We estimate $\mathcal{U}_{\inf}$. It is similar to (i) and thus we only sketch the crucial steps. 
Let $\tilde{u}^d_\ms\in V_\ms$ be the discrete element corresponding to $\tilde{u}^d$ in Assumption \ref{conv_rate}.
Taking $$u_H=u_{\text{ms}}^{n,k}+\sum_{i\in\mathcal{I}_1}\delta_{\ms}^{(i)}+\sum_{i=1}^{\Ne} \sum_{j=1}^{J_i^1} d_j^{(i)}\psi_{j,\ms}^{(i)}+\tilde{u}^d_\ms \in V_{\text{ms}}^{(k+1)}$$ 
and using \eqref{eq:newrepres_u}, one can show that 
$$
\mathcal{U}_{\inf} \leq \norm{u^n-u_{\text{ms}}^{n,k}-\sum_{i\in\mathcal{I}_1}\delta_{\ms}^{(i)}-\sum_{i=1}^{\Ne} \sum_{j=1}^{J_i^1} d_j^{(i)}\psi_{j,\ms}^{(i)}}_a^2
\lesssim \mathcal{U}_{\inf}^{(1)} + \mathcal{U}_{\inf}^{(2)},
$$
where
\begin{eqnarray*}
\begin{split}
\mathcal{U}_{\inf}^{(1)} &:= \norm{\sum_{i \in \mathcal{I}_1}(\delta_\glo^{(i)}-\delta_{\ms}^{(i)})}_a^2
+\norm{\sum_{i=1}^{\Ne} \sum_{j=1}^{J_i^1} d_j^{(i)}(\psi_{j}^{(i)}-\psi_{j,\ms}^{(i)})}_a^2 +
\norm{\widetilde{u}^d-\tilde{u}^d_{\ms}}_a^2, \\
\mathcal{U}_{\inf}^{(2)} & := 
\norm{\sum_{i\not \in \mathcal{I}_1} \delta_\glo^{(i)}}_a^2. 
\end{split}
\end{eqnarray*}
Using  Steps 2 and 3, Lemma \ref{lm:onlinebasiserror}, and the definition of $\widetilde{u}^d$, one can show that 
$$\mathcal{U}_{\inf}^{(1)} \leq 
C(D_0, N_K)  C_e C_r(\ell+1)^d (1+\Lambda^{-1}) \left [ \norm{u^n-u^{n,k}_{\text{ms}}}_a^2+\norm{p^n-p^{n,k}_{\text{ms}}}_b^2 \right ],
$$
where $C(D_0, N_K)$ depends on the constants $D_0$ and $N_K$.
%
For the term $\mathcal{U}_{\inf}^{(2)}$, employing the same technique as in the procedure of analyzing $\mathcal{P}_{\inf}^{(2)}$, one can obtain the following estimate: 
$$ \mathcal{U}_{\inf}^{(2)} \lesssim N_K^2 (1+ \Lambda^{-1}) \theta \left [ \norm{u^n-u^{n,k}_{\text{ms}}}_a^2+\norm{p^n-p^{n,k}_{\text{ms}}}_b^2 \right ].$$
Consequently, we have 
\begin{eqnarray}
\mathcal{U}_{\inf} \lesssim \left [ C(D_0, N_K)  C_e C_r(\ell+1)^d + N_K^2 \theta \right ] ( 1+ \Lambda^{-1}) \left ( \norm{u^n-u^{n,k}_{\text{ms}}}_a^2+\norm{p^n-p^{n,k}_{\text{ms}}}_b^2 \right ).
\label{eqn:conclu_2}
\end{eqnarray}

\item [(iii)] Finally, we estimate $\eta_u^n$ and $\eta_p^n$.
The solution $(u_{\text{ms}}^{n,k+1},p_{\text{ms}}^{n,k+1})\in V_\ms^{(k+1)}\times Q_\ms^{(k+1)}$ satisfies
\begin{eqnarray*}
\begin{split}
	a(u_{\text{ms}}^{n,k+1},v) - d(v, p_{\text{ms}}^{n,k+1}) &= 0, \\
	d(u_{\text{ms}}^{n,k+1},q) + c(p_{\text{ms}}^{n,k+1},q) + \tau b(p_{\text{ms}}^{n,k+1},q) &= \tau(f^n,q)+d(u_{\text{ms}}^{n-1},q) + c(p_{\text{ms}}^{n-1},q), 
\end{split}
\end{eqnarray*}
\tforall $(v,q) \in V_\ms^{(k+1)}\times Q_\ms^{(k+1)}$.
Observe that
\begin{eqnarray*}
\begin{split}
a(\eta_u^n, v)-d(v,\eta_p^n)&=d(v,\rho_p^n), \\
c(\eta_p^n, q)+d(\eta_u^n, q)+\tau b(\eta_p^n,q)&=-\tau(f^n,q)-d(u_{\text{ms}}^{n-1},q)-c(p_{\text{ms}}^{n-1},q)+d(\mathfrak{R}_{a,\ms}^{k+1}(u^n),q)\\
& \quad +c(\mathfrak{R}_{b,\ms}^{k+1}(p^n),q)+\tau b(p^n,q)\\
&= -d(u^n-u^{n-1},q)-c(p^n-p^{n-1},q)-\tau b(p^n,q) -d(u_{\text{ms}}^{n-1},q)\\
& \quad c(p_{\text{ms}}^{n-1},q)+d(\mathfrak{R}_{a,\ms}^{k+1}(u^n),q)+c(\mathfrak{R}_{b,\ms}^{k+1}(p^n),q)+\tau b(p^n,q) \\
&=-d(\rho_u^n,q)-c(\rho_p^n,q)+d(u^{n-1}-u_\ms^{n-1},q)+c(p^{n-1}-p_\ms^{n-1},q), 
\end{split}
\end{eqnarray*}
for all $(v, q)\in V_{\text{ms}}^{n,k+1}\times Q_{\text{ms}}^{n,k+1}$. Testing with $v_H:=\eta_u^n$ and $q_H:=\eta_p^n$ and summing these two equations up yields 
\begin{eqnarray}
\begin{split}
\norm{\eta_u^n}_a^2+\norm{\eta_p^n}_c^2+\tau\norm{\eta_p^n}_b^2
&\lesssim \norm{\rho_p^n}_c^2+\beta_1^2\norm{\rho_u^b}^2_a+\beta_1^2\norm{u^{n-1}-u_\ms^{n-1}}_a^2+\norm{p^{n-1}-p_\ms^{n-1}}_c^2\\
&\leq\beta_1^2\norm{\rho_u^n}^2_a+ \beta_2^2\norm{\rho_p^n}_b^2 + 
\beta_1^2\norm{u^{n-1}-u_\ms^{n-1}}_a^2+\beta_2^2\norm{p^{n-1}-p_\ms^{n-1}}_b^2.
\end{split}
\label{eqn:conclu_3}
\end{eqnarray}
The terms $\rho_u^n$ and $\rho_p^n$ can be bounded by $\mathcal{U}_{\inf}$ and $\mathcal{P}_{\inf}$. 
\end{itemize}
Finally, combining \eqref{eqn:conclu_1}, \eqref{eqn:conclu_2}, and \eqref{eqn:conclu_3}, we obtain the desired convergence estimate. This completes the proof. 
\end{proof}

\section{Conclusion}\label{sec:con}

In this work, we proposed an online adaptive algorithm within the framework of the constraint energy minimizing generalized multiscale finite element method (CEM-GMsFEM) to solve the heterogeneous poroelasticity problem. The CEM-GMsFEM provides an offline multiscale method with a mesh-dependent convergence rate independent of the contrast and heterogeneities of the media. The proposed online adaptive algorithm can further reduce the error by adding online basis functions at the online stage based on the local residual information. The enrichment of these online basis functions results in fast error decay, especially when the online adaptive procedure is performed every several time steps. The convergence rate is determined by the online tolerance parameters, which are user-defined. Several numerical tests are shown to validate the theoretical estimates.

\section*{Acknowledgement}
SMP would like to thank the partial support from National Science Foundation (DMS-2208498) when he was affiliated with Department of Mathematics in Texas A\&M University. 

\bibliographystyle{abbrv}
\bibliography{ref_poro.bib}

\appendix 

\section{Implementation of Basis Function Construction} \label{appen:basis}

In this appendix, we discuss some implementation details of the construction of the multiscale basis functions. Recall that the multiscale basis functions are defined by \eqref{eq:mineq1_loc} and \eqref{eq:mineq2_loc}. 
In practice, one can construct the basis functions based on the underlying fine scale grid. 

Let $A_h$ and $B_h$ denote two symmetric and positive definite matrices, where $A_h$ is an $N_{V,\text{fine}}\times N_{V,\text{fine}}$ matrix with $(i,j)$-th entry $a(v_j, v_i)$ and $B_h$ is an $N_{Q, \text{fine}}\times N_{Q,\text{fine}}$ matrix with $(i,j)$-th entry $b(v_j, v_i)$. 
Let $M_{1,h}$ denote the  $N_{V,\text{fine}}\times N_{V,\text{fine}}$ matrix with the $(i,j)$-th entry $s^1(v_i,v_j),$ and $M_{2,h}$ denote the $N_{Q, \text{fine}}\times N_{Q,\text{fine}}$  matrix with the $(i,j)$-th entry $s^2(q_i,q_j)$.  $A_h^i$, $M^i_{1,h}$, and $M^i_{2,h}$ are the restriction of $A_h$, $M_{1,h}$, and $M_{2,h}$ on $K_{i,\ell}$, respectively.  
The number of fine scale basis in $V_h$ and $Q_h$ used on the coarse region $K_{i,\ell}$ is denoted by  $N^{i}_{V,\text{fine}}$ and $ N^{i}_{Q,\text{fine}}$, respectively. 
With these notations, we know that $A_h^i$, $M^i_{1,h}$, and $M^i_{2,h}$ have the sizes $N^{i}_{V,\text{fine}}\times N^{i}_{V,\text{fine}}$, $N^{i}_{V,\text{fine}}\times N^{i}_{V,\text{fine}}$, and $N^{i}_{Q,\text{fine}}\times N^{i}_{Q,\text{fine}}$.
$R^i_V$ and $R^i_Q$ are the matrices that include all the discrete  auxiliary basis in the space $V_{\text{aux}}(K_{i,\ell})$ and $Q_{\text{aux}}(K_{i,\ell})$, respectively. 
The number of auxiliary basis in $V_{\text{aux}}$ and $Q_{\text{aux}}$ defined on $K_{i,\ell}$ are denoted by $N^{i}_{V,\text{aux}}$ and $ N^{i}_{Q,\text{aux}}$,  then we know $R^i_V$ and $R^i_Q$  have the size $N_{V,\text{fine}}\times N_{V,\text{aux}}$ and $N_{Q,\text{fine}}\times N_{Q,\text{aux}}$.
$R^{i}_{V,j}$ are $R^{i}_{Q,j}$ are the j-th columns of $V_{\text{aux}}$ and $Q_{\text{aux}}$, respectively.
$\Psi_{j,h}^i$ is the discrete $ \psi_{j,\text{ms}}^{(i)}$ and $ \Phi_{j,h}^i$ is the discrete $ \phi_{j,\text{ms}}^i$.  Using the definitions above, the matrix formulation of \eqref{eq:mineq1_loc} is

\begin{equation}\label{eq:matrixmineq1_loc}
\left( A_h^i+ M_{1,h}^i (R^i_V R^{i,T}_V)M_{1,h}^{i,T}  \right) \Psi_{j,h}^i=M_{1,h}^{i}R^{i}_{V,j}  ,
\end{equation}
 while the matrix formulation of \eqref{eq:mineq2_loc} is

\begin{equation}\label{eq:matrixmineq2_loc}
\left( B_h^i+ M_{2,h}^i (R^i_QR^{i,T}_Q)M_{2,h}^{i,T}  \right) \Phi_{j,h}^i= M_{2,h}^{i}R^{i}_{Q,j}. 
\end{equation}


\section{Nomenclature}
\begin{table}[htbp!]
\begin{tabular}{c|l}
$\Omega, \partial \Omega$ & spatial domain and its boundary \\
$T$ & terminal time \\
$u$ & displacement variable \\
$p$ & pressure variable \\
$\sigma(u)$ & stress tensor \\
$\alpha$ & Biot-Willis fluid-solid coupling coefficient \\
$M$ & Biot modulus \\
$\kappa$ & permeability \\
$\nu$ & fluid viscosity \\
$f$ & source function \\
$p^0$ & initial condition (for pressure) \\
$\lambda, \mu$ & Lam\'e coefficients \\
$\varepsilon (u)$ & strain tensor \\
$E$ & Young's modulus \\
$\nu_p$ & Poisson ratio \\
$\mathcal{T}^h$ & spatial fine mesh \\
$h$ &fine mesh size \\
$\mathcal{T}^H$ & coarse mesh \\
$H$ & spatial coarse mesh size \\
$\tau$ & temporal step size \\
$J_i^1$ & number of local offline basis functions (displacement) \\
$J_i^2$ & number of local offline basis functions (pressure) \\
$\ell$ & oversampling parameter \\
$\gamma, \theta$ & online enrichment parameters \\

\end{tabular}
\end{table}

\end{document}